\newtheorem{theorem}{Theorem}[section]
\newtheorem{lemma}[theorem]{Lemma}
\newtheorem{proposition}[theorem]{Proposition}
\newtheorem{definition}[theorem]{Definition}
\newtheorem{corollary}[theorem]{Corollary}
\newtheorem{conjecture}[theorem]{Conjecture}
\newtheorem{example}[theorem]{Example}
\newtheorem{remark}[theorem]{Remark}
\newenvironment{prf} {{\bf Proof.}}{\hfill $\Box$}
\newcommand\FF{{\mathbb{F}}}
\def\e{\epsilon}
\def\<{\langle}
\def\>{\rangle}
\def\bt{\Bbb T}
\def\bc{\Bbb C}
\def\ga{\gamma}
\def\vp{\varphi}
\def\ov{\overline}
\def\la{\lambda}
\def\br{\Bbb R}
\def\bn{\Bbb N}
\def\bz{\Bbb Z}
\def\bd{\Bbb D}
\def\minus{
  \setbox0=\hbox{-}
  \vcenter{
    \hrule width\wd0 height \the\fontdimen8\textfont3 }}
\def\one{(\mathbf{1})}
\def\al{\alpha}
\def\be{\beta}
\def\om{\omega}
\def\ov{\overline}
\def\re {{\hbox{Re}}}
\def\bi {{\hbox{Bi}}}
\newtheorem{observ}[theorem]{Observation}
\def\fix {{\hbox{Fix}}}
\def\diag {{\hbox{diag}}}
\def\si {{\hbox{sign}}}
\def\one{(\mathbf{1})}
\def\supp {{\hbox{supp}}}
\begin{document}
\title{On  biunimodular vectors for unitary matrices}
\author{
Hartmut F\"uhr, Ziemowit Rzeszotnik}
\maketitle

\abstract{A biunimodular vector of a unitary matrix $A \in U(n)$ is a vector $v \in \mathbb{T}^n\subset\bc^n$ such that $Av \in \mathbb{T}^n$ as well. Over the last 30 years, the sets of biunimodular vectors for Fourier matrices have been the object of extensive research in various areas of mathematics and applied sciences. 
Here, we broaden this basic harmonic analysis perspective and extend the search for biunimodular vectors to arbitrary unitary matrices. 
This search can be motivated in various ways. The main motivation is provided by the fact, that the existence of  biunimodular vectors for an arbitrary unitary matrix allows for a natural understanding of the structure of all unitary matrices.
}

\section* {Note} As we have learned from the referees, the existence of  biunimodular vectors for an arbitrary unitary matrix has been
recently proved in this journal  by Idel and Wolf (see \cite{IW}) basing on the results of \cite{BEP} in symplectic geometry due to Biran, Entov and Polterovich. In this paper we provide an independent account of this existence phenomenon and push a bit further the theory of  biunimodular vectors.
\section* {Outline}

In Section 1 we describe the background for this research. The next two sections discuss the structure of all unitary matrices via decompositions based on biunimodular vectors. In Section 4 we state the problem of finding biunimodular vectors in several equivalent terms. One of them is a maximal torus decomposition of $SU(n)$, that can be viewed as a close relative of the spectral theorem. In the following sections we justify that all matrices in $U(2)$ and $U(3)$ possess biunimodular vectors and provide two closed formulas for $U(3)$. One of them being an analogue of the standard Euler-Tait-Bryan decomposition of $O(3)$.  In Section 8 we provide a simplified novel method to synthesise all matrices in $U(2^n)$, that enjoys an elementary proof for all $n\in\bn$. Section 9 is devoted to the study of biunimodular vectors in the general case from the Lie groups perspective. Using basic tools we show that  in arbitrary dimension $n$, the set of matrices possessing biunimodular vectors contains a nonempty open 
subset of $U(n)$. We also obtain that, up to normalization, the set of biunimodular vectors is finite for almost every unitary matrix. Finally, we employ a simple alternating projection algorithm that for a numerically given unitary matrix produces vectors, that are biunimodular within a 
set precision. We test this algorithm to provide numerical evidence indicating its effectiveness in finding such near biunimodular vectors for random unitary matrices with dimensions up to 100. We also apply the algorithm to Fourier matrices, and compare the search results to the known solutions. 

\section{Background}\label{introduction}

The paper explores the notion of  a {\bf biunimodular vector} that traces back to Gauss. The classic example of such vectors are {\it Gauss sequences} given for $k=0,1,\dots, n-1$ as 
$$
u_k=ce^{\frac{2\pi i}n (\la k^2+\mu k)}  \hskip2cm \quad n \text{ odd},
$$ 
$$
u_k=ce^{\frac{2\pi i}n(\frac{\la}2 k^2+\mu k)} \hskip2cm  \quad n \text{ even},
$$ 
where $\la,\mu\in\{0,1,\dots, n-1\}$ with $\la$ relatively prime to $n\in\bn$ and $c\in\bt=\{z\in\bc:|z|=1\}$. These vectors are {\it unimodular}, that is, $|u_k|=1$ for all  $k=0,1,\dots, n-1$ and have an interesting property. Their Fourier transform $F_nu$, defined by aplying to $u$ the unitary $n\times n$ matrix $F_n$ with entries $(F_n)_{j,k}=\frac1{\sqrt n}e^{-2\pi i\frac{ jk}n}$, is unimodular as well.\footnote{ As a matter of fact, the Fourier transform of a Gauss sequence yields another Gauss sequence.}

Following Haagerup (\cite{Haa}) and Saffari (\cite{Sa}) we attribute the starting point of the general theory of biunimodular vectors to Per Enflo. In 1983 he has asked whether for prime $n$  the only unimodular vectors of length $n$ whose Fourier transform is also unimodular are  Gauss sequences. While the answer is affirmative when $n\le 5$, a computer search done by Bj\"orck for $n=7$ provided a surprising counterexample
$$
(1,1,1,e^{i\theta},1,e^{i\theta},e^{i\theta}), 
$$ 
with $\theta=\arccos (-\frac34)$ yielding $e^{i\theta}=-\frac{3}4+i\frac{\sqrt7}4$. These findings, published in 1985 (\cite{Bj0}),  have been generalized in \cite{Bj} and later the term ``bi-unimodular sequence'' has been coined by Bj\"orck and Saffari to describe a unimodular finite vector, whose Fourier transform is unimodular as well (see \cite{BS}).  

The Bj\"orck sequences provided in \cite{Bj} are biunimodular vectors of a prime length $p$. For $p\equiv -1 \pmod 4$ their coefficients are either $1$ or $e^{i\theta}$ with $\theta=\arccos\frac{1-p}{1+p}$.  For $p\equiv 1 \pmod 4$ their coefficients are either $1$, $e^{i\eta}$ or $e^{-i\eta}$ with $\eta=\arccos\frac{1}{\sqrt p +1}$. The importance of Bj\"orck sequences has been underlined in \cite{BBW}, where it was shown that for these sequences (contrary to Gauss sequences) the discrete narrow band
ambiguity function has an optimal bound. 
However, Gauss and Bj\"orck sequences provide only a glimpse at the structure of biunimodular vectors for $F_n$.
Even in the case $p=7$ there is another such vector found by Bj\"orck and  Fr\"oberg 
that is related neither to a Gauss nor to a Bj\"orck sequence (see \cite{BF} or \cite{Haa}).

Moreover, in \cite{BS} Bj\"orck and Saffari proved that if $n$ is divisible by a square, then there are infinitely many (continuum) biunimodular vectors with the leading entry 1, and they conjectured that for all square-free  $n$ the number of such biunimodular vectors is finite.
\begin{conjecture}\label{bs}{\rm(Bj\"orck, Saffari)} If $n\in\bn$ is not divisible by a square, then the set of biunimodular vectors (with the leading entry 1) for the Fourier matrix $F_n$ is finite.
\end{conjecture}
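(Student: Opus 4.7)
The natural plan is to recast the conjecture as a dimension statement. Set
\begin{equation*}
X_n = \{v \in \bt^n : F_n v \in \bt^n,\; v_0 = 1\},
\end{equation*}
a compact real-analytic subset of the $(n-1)$-dimensional torus $\{v_0=1\}\cap\bt^n$, cut out by the $n$ equations $|(F_n v)_j|^2 = 1$. By Parseval one of these is redundant, so $X_n$ is generically the zero set of $n-1$ equations in $n-1$ unknowns. Because $X_n$ is compact, finiteness is equivalent to $\dim X_n = 0$, so the goal becomes: rule out positive-dimensional components of $X_n$ when $n$ is square-free.

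The first step is to analyse the tangent space $T_v X_n$ at any biunimodular $v$. Writing $v_k = e^{i\phi_k}$ and differentiating the constraints yields linear conditions of the form
\begin{equation*}
\mathrm{Im}\bigl[\overline{(F_n v)_j}\sum_{m=1}^{n-1} v_m\, e^{-2\pi i jm/n}\,\delta\phi_m\bigr] = 0, \quad j = 0,1,\ldots,n-1,
\end{equation*}
so $T_v X_n$ is the kernel of a real $n \times (n-1)$ Jacobian $J(v)$. Proving $\dim X_n = 0$ reduces to showing that $J(v)$ has full column rank at every biunimodular $v$ when $n$ is square-free; compactness plus a constant-rank argument then give isolation of the solution set, hence finiteness.

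To attack the rank problem I would exploit the character-theoretic structure of $F_n$. Under the Chinese Remainder identification $\bz/n\bz \cong \prod_i \bz/p_i\bz$ for square-free $n = p_1\cdots p_r$, the Fourier matrix factors as a tensor product $F_n = F_{p_1} \otimes \cdots \otimes F_{p_r}$, and one expects $J(v)$ to decompose compatibly along characters of the factors. The aim would be to reduce the rank statement to the prime components and inherit it from Haagerup's analysis in \cite{Haa}, which already handles $n$ prime. Conversely, when a square $d^2$ divides $n$, the Bj\"orck--Saffari continuous families should materialise as explicit elements of $\ker J(v)$, suggesting that rank drops are genuinely caused by square factors and that the dichotomy encoded in Conjecture~\ref{bs} is visible at the level of $J(v)$.

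The main obstacle is precisely this rank statement, which is essentially a reformulation of the conjecture itself and has resisted proof since \cite{BS}. All current techniques stall when confronted with the ``exotic'' biunimodular vectors such as the Bj\"orck--Fr\"oberg solution for $p=7$, whose tangent analysis admits no obvious group-theoretic handle. A plausible route forward is to combine the character decomposition with finiteness tools from real algebraic geometry — realising $X_n$ as the real locus of a complex variety and applying a B\'ezout-type bound after a suitable toric compactification — but an unconditional proof remains a serious open problem, and this proposal should be read as isolating the correct battleground rather than as a complete argument.
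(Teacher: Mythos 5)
This statement is an open conjecture: the paper does not prove it, and neither do you. The paper records it as due to Bj\"orck and Saffari, notes Backelin's converse (infinitely many solutions when a square divides $n$), and observes that the only settled case is $n$ prime, via Haagerup's direct argument in \cite{Ha}. Your proposal is candid that it is a research plan rather than a proof, so the review must be that there is a genuine gap --- indeed the gap is the entire conjecture. Two concrete problems with the plan itself are worth flagging. First, your proposed ``battleground'' --- showing that the Jacobian $J(v)$ has full column rank at \emph{every} biunimodular $v$ --- is strictly stronger than finiteness. A compact real-analytic set can be finite while containing degenerate (non-regular) points; full rank everywhere would give that every solution is isolated \emph{and} simple. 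Whether all cyclic $p$-roots have multiplicity one is explicitly raised as an open question in the paper even for primes, where finiteness is already known by a different route; so your reduction replaces the conjecture by a harder statement that is unresolved even in the one case where the conjecture is settled. (Compare also the paper's Section on $\bi(A)$: Sard's theorem gives finiteness of $\bi(A)$ only for \emph{almost every} $A$, precisely because regularity at every preimage point cannot be guaranteed for a fixed matrix such as $F_n$.)

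Second, the Chinese-Remainder/tensor-product reduction does not interact well with the unimodularity constraints. While $F_n$ is indeed permutation-equivalent to $F_{p_1}\otimes\cdots\otimes F_{p_r}$ for square-free $n$, a biunimodular vector for a tensor product of Fourier matrices need not be (and in general is not) a tensor product of biunimodular vectors for the factors, and the quadratic constraints $|(F_nv)_j|^2=1$ do not split along the factors. So there is no evident way to ``inherit'' the rank statement from the prime components, and the composite square-free cases $n=6,10,14,15,\ldots$ remain genuinely out of reach of Haagerup's prime-case analysis. The B\'ezout/toric-compactification idea runs into the same circularity you acknowledge: such bounds count solutions only once zero-dimensionality is established, which is the conjecture itself. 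In short, the proposal correctly identifies the problem's shape but supplies no step toward resolving it, and its central reduction is to a statement at least as hard as the one to be proved.
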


An easy observation is that a unimodular vector $u=(u_0,u_1,\dots,u_{n-1})$ is biunimodular for $F_n$ if and only if its cyclic translations are orthogonal. This orthogonality relation is called {\em zero autocorrelation} and means that
\begin{equation}\label{zac}
\sum_{k=0}^{n-1}u_k\ov{u_{k+i}}=0 \quad\text{ for } i=1,2,\dots,n-1,
\end{equation}
where the  index $k+i$ is taken $(\bmod n)$.

The idea of Bj\"orck to search for biunimodular vectors $u$ for $F_n$ relies on setting $x_k=\frac{u_{k+1}}{u_k}$, where $k=0,1,\dots,n-1$ with $u_n=u_0$ and transforming (\ref{zac}) to the following set of equations:
\begin{equation}\label{cnr}
\begin{split}
x_0+x_1+\dots+x_{n-1}=0\\
x_0x_1+x_1x_2+\dots+x_{n-1}x_0=0\\
\vdots\\
x_0x_1\dots x_{n-2}+\dots+x_{n-1}x_0\dots x_{n-3}=0\\
x_0x_1\cdot\dots\cdot x_{n-1}=1
\end{split}
\end{equation}
The unimodular solutions of the above set of equations are in one-to-one correspondence to biunimodular vectors for $F_n$  (with the leading entry 1). The general complex solutions of (\ref{cnr}) are called {\em cyclic $n$-roots} and became a research area of independent interest. In the mid-1980s Arnborg has asked Davenport to establish whether the set of cyclic 6-roots is finite, leading to the popularisation of the cyclic $n$-roots problem in \cite{Dav}. Backelin proved in \cite{Ba} that for $n$ divisible by a square the number of cyclic $n$-roots is infinite. For $2\le n\le 8$ all cyclic $n$-roots were found by Bj\"orck and Fr\"oberg (see \cite{BF, BF2, BaF}) by using Gr\"obner basis techniques to solve the system (\ref{cnr}). Faug\`ere has found cyclic 9 and 10 roots by improving the search for the Gr\"obner basis  (see \cite{Fa1} and \cite{Fa2}). These results have been confirmed and extended by polyhedral homotopy continuation methods applied for solving (\ref{cnr}) as a benchmark 
problem. The 
numbers of cyclic $n$-roots for a square-free $n$ between 2 and 14 is listed below.\footnote{For $2\le n\le 11$ the amount of roots has been verified by various methods, for $n=13$ and $14$ the number of cyclic $n$-roots is claimed only by Li and Tsai (see \cite{LT}). Moreover, they count only isolated roots, so the number of cyclic 14-roots still has to be verified as finite or not. } 

\begin{center}
\begin{tabular}{| c || c | c | c | c | c | c | c | c | c |c| }
    \hline
    $n$ & 2 & 3 &5&6&7&10&11&13&14&15\\ \hline
    cyclic $n$-roots & 2 &6& 70&156&924&34,940& 184,756& 2,704,156&  8,464,307&?\\ \hline
biuni vectors & 2 &6& 20&48&532&?& ?& 53,222&  ?&?\\ \hline
  \end{tabular}
\end{center}

The table also contains information about the number of unimodular cyclic $n$-roots that yields the amount of biunimodular vectors for $F_n$. The numbers up to $n=7$ were given by Bj\"orck, Fr\"oberg and Haagerup, who inspected the obtained cyclic $n$-roots to check for unimodular solutions, establishing the real benchmark for solving the cyclic $n$-root problem (see \cite{BF} and \cite{Haa}). The number of biunimodular vectors for $n=13$ has been obtained by Gabidulin and Shorin in \cite{GS}. 

Basing on the number of cyclic $p$-roots for prime $p$ up to 7 Fr\"oberg concluded that the amount of cyclic $p$-roots should be ${2p-2\choose p-1}$. This conjecture has been confirmed by Haagerup in \cite{Ha} under the condition that the roots are counted with multiplicities.

\begin{theorem}\label{hag}{\rm(Haagerup)} For prime $p$ the number of cyclic $p$-roots counted with multiplicities is  ${2p-2\choose p-1}$.
\end{theorem}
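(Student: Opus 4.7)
I would approach the counting via algebraic geometry in the algebraic torus $(\bc^*)^p$, either through the Bernstein--Kushnirenko--Khovanskii (BKK) mixed-volume bound or through an explicit dimension count for the quotient ring. The system (\ref{cnr}) is Laurent-polynomial in the coordinates, and the final equation $x_0 x_1\cdots x_{p-1}=1$ confines every solution to the torus $(\bc^*)^p$. The cyclic group $\bz/p\bz$ acts on the solution set by rotation of indices. A crucial (and easy) observation is that for \emph{prime} $p$ this action is free: a fixed point would satisfy $x_0=\cdots=x_{p-1}=x$ with $px=0$ from the first equation and $x^p=1$ from the last, which is impossible. Hence the number of solutions is automatically divisible by $p$, compatible with $\binom{2p-2}{p-1}=p\,C_{p-1}$, where $C_{p-1}$ is the $(p-1)$st Catalan number.

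To get the exact count, I would attach to each of the $p-1$ cyclic equations $\sigma_k'=0$ its Newton polytope $N_k$, namely the convex hull of the $p$ characteristic vectors of cyclic intervals of length $k$ in $\bz/p\bz$, and apply Bernstein's theorem to identify the solution count in $(\bc^*)^p$ with the mixed volume $\mathrm{MV}(N_1,\ldots,N_{p-1})$ after eliminating one coordinate via the product relation. The cyclic symmetry of each $N_k$ under the $\bz/p\bz$-action on coordinates should then be used to reduce the calculation, aiming ultimately at the identity
\[
\mathrm{MV}(N_1,\ldots,N_{p-1}) \;=\; \binom{2p-2}{p-1}.
\]
A natural combinatorial interpretation of the right-hand side as lattice paths in the $(p-1)\times(p-1)$ grid, combined with a Cycle-Lemma style argument accounting for the factor of $p$, suggests that a bijective proof of this identity should be accessible.

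A parallel route, which I would pursue in tandem, is to show that the quotient ring $R := \bc[x_0^{\pm1},\ldots,x_{p-1}^{\pm1}]/I$, where $I$ is the ideal of (\ref{cnr}), has $\dim_\bc R = \binom{2p-2}{p-1}$ by constructing an explicit monomial basis, for instance via a Gr\"obner basis compatible with the cyclic $\bz/p\bz$-symmetry. The fact that the target is a central binomial coefficient hints that standard monomials should be indexed by pairs of lattice paths or by Young tableaux of a specific shape; such a basis, once exhibited, would give the count with multiplicities directly and bypass the polytope computation.

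\emph{Main obstacle.} The principal difficulty is the combinatorial computation itself: either the mixed volume of the cyclic Newton polytopes, or the Hilbert function of $R$, must be shown to equal precisely $\binom{2p-2}{p-1}$, not merely to be bounded by it. Primality of $p$ must enter in an essential way, since Backelin's theorem \cite{Ba} shows that for $n$ divisible by a square, positive-dimensional components appear and the naive count fails. I would expect the cleanest route to invoke primality both to establish the free $\bz/p\bz$-action (as above) and to guarantee that the BKK bound is attained with equality, perhaps by showing that the cyclic polytopes sit in sufficiently generic position only when $p$ is prime, so that no solutions escape to strata of positive dimension in a toric compactification.
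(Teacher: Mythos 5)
You should first be aware that the paper does not prove this statement at all: Theorem~\ref{hag} is quoted as background from Haagerup's preprint \cite{Ha}, so the only meaningful benchmark is Haagerup's own argument. Measured against that, what you have written is a research programme rather than a proof, and the gap sits exactly where you yourself place it: neither of your two routes actually produces the number $\binom{2p-2}{p-1}$. For the BKK route you would need (i) a closed-form evaluation of the mixed volume $\mathrm{MV}(N_1,\ldots,N_{p-1})$ of the cyclic Newton polytopes, which to date is known only through machine computation for individual small $n$ and for which no uniform proof over all primes is available, and (ii) a verification of Bernstein's non-degeneracy condition (no zeros of the facial subsystems in the torus), without which the mixed volume is merely an upper bound on the count with multiplicities. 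For the Gr\"obner route you would need to exhibit the monomial basis, which is essentially the whole content of the theorem. The observations you do establish --- that all solutions lie in $(\bc^*)^p$, that the $\bz/p\bz$-action is free for prime $p$, and that $\binom{2p-2}{p-1}=p\,C_{p-1}$ --- are correct but only yield divisibility of the count by $p$, a very weak consequence of the claimed identity.

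It is worth recording how Haagerup actually obtains the binomial coefficient, because his route sidesteps precisely the combinatorial computation you identify as the main obstacle. Substituting $y_k=x_0x_1\cdots x_{k-1}$ (so $y_0=1$ and $y$ is $p$-periodic thanks to the product relation) and setting $z_k=y_k^{-1}$, the system becomes $2p-2$ equations of bidegree $(1,1)$ in $(y,z)\in\mathbb{P}^{p-1}\times\mathbb{P}^{p-1}$: the $p-1$ relations $\sum_k y_{k+j}z_k=0$ and the $p-1$ proportionality relations $y_kz_k=y_0z_0$. The multihomogeneous B\'ezout number is then the coefficient of $h_1^{p-1}h_2^{p-1}$ in $(h_1+h_2)^{2p-2}$, i.e.\ $\binom{2p-2}{p-1}$, with no mixed-volume identity to prove. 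The genuinely hard part --- and the place where primality enters --- is showing that the solution set is finite and that no intersection escapes to infinity in the multiprojective compactification; this rests on Chebotarev's theorem that every square submatrix of $F_p$ is nonsingular when $p$ is prime, an ingredient entirely absent from your plan. Your hope that primality would manifest itself as ``sufficiently generic position'' of the Newton polytopes is not a substitute for this concrete input, and until one of your two computations is actually carried out the theorem remains unproved by your proposal.
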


The paper of Haagerup contains an elegant direct argument showing that the number of cyclic $p$-roots is finite for prime $p$, confirming Conjecture~\ref{bs} in the prime case. His more advanced reasoning allows to count the cyclic $p$-roots with multiplicities, imposing a question whether for all primes cyclic $p$-roots have multiplicity 1 and providing an additional motivation to verify the number of distinct cyclic 13-roots. 

A slightly different approach towards finding biunimodular vectors for $F_n$ has been taken by Gabidulin and Shorin. In \cite{GS} they multiplied the system (\ref{zac}) by the product $\prod_{k=0}^{n-1}u_k$ to obtain a system of equations
that has been treated with Gr\"obner basis techniques as well, providing the mentioned number of biunimodular vectors for $F_{13}$.

The theory of biunimodular vectors may be hard to follow due to their popularity stemming from applications in several fields of signal processing. This popularity can be measured by the amount of various names given to the same objects by many researchers working within the area. Gauss sequences are often called Zadoff, Chu or Wiener sequences. Unimodular vectors are described as constant amplitude, phase-shift keyed or polyphase sequences. Vectors whose Fourier transform is unimodular are named as perfect, having optimum correlation or zero autocorrelation. Therefore,  biunimodular vectors  are studied as CAZAC (constant amplitude zero  autocorrelation) sequences, PSK (phase-shift keyed) perfect sequences, polyphase sequences with optimum correlation and so on (see \cite{ben} by Benedetto {\it et al.} for a better account of this nomenclature and a list of relevant papers  that can be extended by adding an early reference \cite{Al}). 

Since full understanding of biunimodular vectors would allow to resolve the circulant Hadamard matrix conjecture, and is just a tip of the iceberg being a classification of all Hadamard matrices, the complexity of the topic is apparent.

It is important to notice, that the research on biunimodular vectors is not confined to the Fourier transform $F_n$ on the cyclic group $\bz_n$. Indeed, real biunimodular vectors for the Fourier transform on the group $(\bz_2)^n$ are investigated under the name of {\it bent functions}. Here $n$ must be even and the number of bent functions is known for $n\le 8$. The latest count is due to Langevin and Leander who proved the number to be 99270589265934370305785861242880 in the case $n=8$ (see \cite{LL}).

In \cite{GiRz} it has been proved that biunimodular vectors exist for the Fourier transform on any finite abelian group. This raises the question of existence of such vectors for an arbitrary unitary matrix and motivates the following
\begin{definition} Let $A$ be an $n\times n$ unitary matrix. A vector $v\in\bc^n$  is called a {\bf biunimodular vector} for $A$ if both $v$ and $Av$ are unimodular (i.e. have all entries of modulus one).
\end{definition}

One can argue that the above notion has surfaced already in works on the problem of mutually unbiased bases (MUB). Indeed, the problem can be brought down to finding all biunimodular vectors for a given complex $n\times n$ Hadamard matrix and checking for a subcollection of these vectors that forms an orthonormal basis of $\bc^n$. This approach has been used in \cite{Gr} by Grassl for $F_6$ and  by Bengtsson {\it et al.} for some other  $6\times 6$ Hadamard matrices (see \cite{Be}). In \cite{BW1} Brierley and Weigert conducted an extensive search on all known Hadamard matrices in dimension 6 providing an impressive study of their biunimodular vectors leading to an approximate solution of the MUB problem in this dimension (for lower dimensions accessed via biunimodular vectors see  \cite{BW2}).

In the following sections we explain that the existence of biunimodular vectors for an arbitrary unitary matrix allows to understand the structure of unitary matrices in fairly simple terms.

\section {Synthesis of $U(n)$}\label{s3}

Unitary $n\times n$ matrices form the unitary group $U(n)$ and are a basic notion in mathematics, physics, chemistry and engeneering. These matrices can be obtained in various standard ways: Gram-Schmidt process, Householder reflections, Hurwitz parametrization, Cayley transform and via Hermitian matrices using the matrix exponential. In 1952 Murnaghan described a convenient parametrization of the unitary group (see \cite{Mu}).\footnote{Another such parametrization has been provided in 1982 by Di\c{t}\u{a} (see \cite{Dit1}).} This has been followed by the work of Reck {\it et al.} who in 1994 proved that any unitary matrix can be obtained as a sequence of consecutive beam splitter transformations, that can be conducted experimentally in the laboratory using optical devices (\cite{Reck}).  In the current century the research on finding the ways to generate unitary matrices has intensified. Nemoto described the special unitary group $SU(n)$ in \cite{Ne} basing on an earlier result \cite{Ro} of Rowe  {\it et al.
} Tilma  and Sudarshan 
provided an Euler angle 
parametrization for $SU(n)$ and $U(n)$ (see \cite{TS1, TS2}). Meanwhile  Di\c{t}\u{a}  in \cite{Dit2} has shown another description of $U(n)$. A recursive parametrization of unitary matrices has been given by Jarlskog in \cite{Jar} and a composite parameterization was done in \cite{SHH} (see \cite{Cab, Dit2}  and \cite{SHH} for more details on these developments).

The search for biunimodular vectors for an arbitrary unitary matrix can be motivated by the following recipe to generate unitary matrices. Consider $n\in\bn$ and a set of $n^2$ complex parameters $\{a_{jk}\in\bt:j,k=1,2,\dots, n\}$, where $\bt=\{z\in\bc:|z|=1\}$. In order to synthesize a unitary matrix $A_n\in U(n)$
from these parameters, it is convenient to organize them as in the table below

\[
\begin{array}{*{5}{c}}
\cline{1-3}\cline{5-5}
\multicolumn{1}{|c|}{a_{11}} &\multicolumn{1}{|c|}{ a_{12}} & \multicolumn{1}{|c|}{a_{13}}  &\dots& \multicolumn{1}{|c|}{a_{1n}}\\
\cline{1-2}
\multicolumn{1}{|c}{a_{21}}  & \multicolumn{1}{c|}{a_{22}} & \multicolumn{1}{|c|}{a_{23}} & \dots&\multicolumn{1}{|c|}{a_{2n}}\\
\cline{1-3}
\multicolumn{1}{|c}{a_{31}} & a_{32} & \multicolumn{1}{c|}{a_{33}} &\dots& \multicolumn{1}{|c|}{a_{3n}} \\
\cline{1-3}
\vdots &\vdots&\vdots&& \multicolumn{1}{|c|}{\vdots}\\
\cline{1-5}
\multicolumn{1}{|c}{a_{n1}} & a_{n2} &a_{n3}&\dots& \multicolumn{1}{c|}{a_{nn}}
  \\
\cline{1-5}

\end{array}
\]
and run the following recursive procedure using the Fourier transform  $(F_l)_{j,k}=\frac1{\sqrt l}e^{-2\pi i\frac{ jk}l}$ and its conjugate transpose
 $(F_l^*)_{j,k}=\frac1{\sqrt l}e^{2\pi i\frac{ jk}l}$ . 

The initial unitary matrix $A_1$ is given as $[a_{11}]$. And, in general, for $2\le l\le n$

\begin{equation}\label{syn}
 A_l = \begin{bmatrix}
a_{l1} & 0&\dots&0 \\
0 & a_{l2}&\dots&0\\
\vdots&\vdots&\ddots&\vdots\\
0&0&\dots&a_{ll}
\end{bmatrix}F_l
\left[ \: 
\begin{array}{*{4}{c}}
1&0&\dots&0\\
\cline{2-4}
0& \multicolumn{1}{|c}{} &  & \multicolumn{1}{c|}{} 
\\
\vdots& \multicolumn{1}{|c}{} & A_{l\minus 1} & \multicolumn{1}{c|}{} 
 \\
0&\multicolumn{1}{|c}{} &  & \multicolumn{1}{c|}{}
  \\
\cline{2-4}

\end{array}
\,\,\right] F_l^*
\begin{bmatrix}
1 & 0&\dots&0 \\
0 & a_{1l}&\dots&0\\
\vdots&\vdots&\ddots&\vdots\\
0&0&\dots&a_{l\minus 1\,l}
\end{bmatrix},
 \end{equation}
with an obvious block matrix notation.

The above procedure is not one-to-one, meaning that different sets of parameters may give the same matrix.  As to the question if it is onto,  the simplicity of the above scheme can raise some doubts whether it can  reveal the structure of an arbitrary unitary matrix. Nevertheless, in the next section we shall prove that all unitary matrices can be obtained in this way, provided that all of them possess biunimodular vectors.  

\section {Analysis of $U(n)$}

We have the following basic
\begin{proposition}\label{ana}
A unitary matrix  $A\in U(n)$ has a biunimodular vector $v$ if and only if 
\[
A= \begin{bmatrix}
w_1 & 0&\dots&0 \\
0 & w_2&\dots&0\\
\vdots&\vdots&\ddots&\vdots\\
0&0&\dots&w_n
\end{bmatrix}F_n
\left[ \: 
\begin{array}{*{4}{c}}
1&0&\dots&0\\
\cline{2-4}
0& \multicolumn{1}{|c}{} &  & \multicolumn{1}{c|}{} 
\\
\vdots& \multicolumn{1}{|c}{} &B & \multicolumn{1}{c|}{} 
 \\
0&\multicolumn{1}{|c}{} &  & \multicolumn{1}{c|}{}
  \\
\cline{2-4}

\end{array}
\,\,\right] F_n^*
\begin{bmatrix}
\ov {v_1} & 0&\dots&0 \\
0 & \ov {v_2}&\dots&0\\
\vdots&\vdots&\ddots&\vdots\\
0&0&\dots&\ov {v_n}
\end{bmatrix},
 \]
where $v=(v_1,v_2,\dots,v_n)\in\bt^n$, $Av=w=(w_1,w_2,\dots,w_n)\in\bt^n$, $B$ is some unitary matrix in $U(n-1)$ and $F_n$ is the Fourier transform with the adjoint $F_n^*$.
\end{proposition}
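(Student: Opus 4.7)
The plan is to prove the equivalence by exploiting two simple devices: diagonal unitaries encoding unimodularity, and the Fourier matrix as an intertwiner between the all-ones vector $\mathbf{1}$ and the first standard basis vector $e_1$. Concretely, the identities $F_n e_1 = \frac{1}{\sqrt{n}}\mathbf{1}$ and $F_n^*\mathbf{1} = \sqrt{n}\, e_1$ translate any unitary $U$ satisfying $U\mathbf{1}=\mathbf{1}$ into a unitary $V=F_n^* U F_n$ satisfying $Ve_1=e_1$, which forces $V$ to have the block form $1\oplus B$ with $B\in U(n-1)$. The entire proof is a bookkeeping of this translation.

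For the forward direction, I introduce $D_v:=\diag(v_1,\ldots,v_n)$ and $D_w:=\diag(w_1,\ldots,w_n)$, so that $D_v\mathbf{1}=v$ and, by the biunimodular hypothesis, $D_w\mathbf{1}=w=Av$. Then $U:=D_w^*AD_v$ is unitary and satisfies $U\mathbf{1}=D_w^*AD_v\mathbf{1}=D_w^*w=\mathbf{1}$. Setting $V:=F_n^* U F_n$, the computation
\[
Ve_1 \;=\; F_n^*UF_n e_1 \;=\; \tfrac{1}{\sqrt{n}}F_n^* U\mathbf{1} \;=\; \tfrac{1}{\sqrt{n}}F_n^*\mathbf{1} \;=\; e_1
\]
shows that the first column of $V$ is $e_1$; since $V$ is unitary, the first row must then be $e_1^T$, so $V=\bigl(\begin{smallmatrix}1 & 0\\ 0 & B\end{smallmatrix}\bigr)$ for some $B\in U(n-1)$. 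Unwinding $D_w^*AD_v=F_nVF_n^*$ (equivalently $U=F_nVF_n^*$) gives exactly the displayed factorization of $A$.

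For the reverse direction, I assume $A$ has the stated form and trace $v$ through the factors. Since $v\in\bt^n$, the entries of $D_v^*v$ are $\bar v_i v_i=1$, so $D_v^*v=\mathbf{1}$. Then $F_n^*\mathbf{1}=\sqrt{n}\,e_1$, which is fixed by $V$ because of its block form, and $F_n(\sqrt n\,e_1)=\mathbf{1}$, whence $Av=D_w\mathbf{1}=w\in\bt^n$. Thus $v$ is biunimodular for $A$.

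The main obstacle is not conceptual but notational: one must align the conventions for $F_n$ versus $F_n^*$ and for $D_v$ versus $D_{\bar v}$ used in the displayed formula. Once those are fixed by the single observation $F_ne_1=\frac{1}{\sqrt n}\mathbf{1}$, the argument is essentially a change-of-basis statement, and both directions are two or three lines of manipulation. The role of the Fourier matrix is simply to diagonalize the cyclic shift that moves the unimodular condition onto a single coordinate, after which the unitarity of $V$ takes care of the rest.
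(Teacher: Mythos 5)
Your proof is correct and follows essentially the same route as the paper: both arguments reduce the biunimodularity of $v$ to the condition $D_w^*AD_v\in{\rm Fix}(\mathbf{1})$ and then conjugate by $F_n$, using $F_n\mathbf{e}=\frac{1}{\sqrt n}\mathbf{1}$, to land in the stabilizer of $\mathbf{e}$, whose elements are exactly the block matrices $1\oplus B$ with $B\in U(n-1)$. The only difference is presentational: you spell out the two implications separately, while the paper phrases the whole thing as a chain of equivalences.
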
 
\begin{prf} We shall use the following notation. For an arbitrary vector $u\in\bc^n$ by $D_u$ we shall denote the diagonal matrix with $u$ on the diagonal. 
 We also consider the stabilizer group of $u$, defined as ${\rm Fix}(u) = \{ S \in U(n): Su = u \}$ and let $\mathbf{1} = \left(1,1,\dots,1\right)$, $\mathbf{e} = \left(1,0,\dots,0\right)$ both in $\bc^n$. 

To conduct the proof we observe, that there exist  unimodular vectors $v$ and $w$ such that $Av=w$  iff $AD_v\mathbf{1}=D_w\mathbf{1}$, what is equivalent  to saying that $D_w^*AD_v\in{\rm Fix}\one$ with $v,w\in\bt^n$. 

The description of the group ${\rm Fix}\one$ is related to the group  ${\rm Fix}(\mathbf{e})$. Indeed, since $F_n\mathbf{e}=\frac{1}{\sqrt n} \mathbf{1}$, it is easy to see that $M\in {\rm Fix}\one$ iff $F_n^*MF_n\in {\rm Fix}(\mathbf{e})$.

Therefore, we see that $A$ has a biunimodular vector $v$ iff $F_n^*D_w^*AD_vF_n\in {\rm Fix}(\mathbf{e})$. Due to the obvious description of 
$ {\rm Fix}(\mathbf{e})$ the proposition follows.
\end{prf}

\begin{remark}\label{r1}
Clearly, if we can find a biunimodular vector $v$ for $A$, then for every $c\in\bt$ the vector $cv$ is also biunimodular for $A$. Therefore, we can concentrate on biunimodular vectors with the leading entry $v_1$ equal to 1. This explains why Proposition~\ref{ana} proves, that every unitary matrix can be obtained via formula (\ref{syn}) iff every unitary matrix has a biunimodular vector.
\end{remark}
\begin{remark} The Fourier transform $F_n$ enters the above scheme as a generic unitary matrix with the property that $F_n\mathbf{e}=\frac{1}{\sqrt n} \mathbf{1}$. We need to clarify that, in the above proposition, $F_n$ can be replaced by any unitary matrix $F$ such that  $F\mathbf{e}=\frac{1}{\sqrt n} \mathbf{1}$. However, clearly the easiest description of such $F$ is given by $F=F_nM$ with $M\in{\rm Fix}(\mathbf{e})$.
\end{remark}

In Section~\ref{num:tra} we shall present an algorithm that  for an explicitly  given unitary matrix allows to search for its biunimodular vectors within a set precision. Therefore,  in practice, a numerically given unitary matrix can be analysed to the point when all parameters  $\{a_{jk}\in\bt:j,k=1,2,\dots, n\}$ are found, that allow to synthesize the matrix back  via formula (\ref{syn}). 

On the other hand, as pointed out by the referees, one has the following crucial

\begin{theorem}\label{IW}{\rm(Idel, Wolf +  Biran, Entov, Polterovich)} Every unitary matrix has a biunimodular vector.
\end{theorem}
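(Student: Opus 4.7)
My plan is to reformulate the biunimodularity problem as a Lagrangian intersection problem in complex projective space, and then to invoke the non-displaceability theorem of Biran, Entov and Polterovich. Given $A\in U(n)$, a vector $v\in\bt^n$ with $Av\in\bt^n$ is, up to rescaling, the same data as a projective line $[v]\in\mathbb{CP}^{n-1}$ such that both $[v]$ and $[Av]=A[v]$ lie in the Clifford torus
\[
L=\{[z_1{:}\dots{:}z_n]\in\mathbb{CP}^{n-1}:|z_1|=\dots=|z_n|\}.
\]
Indeed, if $[v],A[v]\in L$ then we may scale $v$ so that $|v_j|=1$ for every $j$; the entries of $Av$ then all have the same modulus, and the identity $\|Av\|^2=\|v\|^2=n$ forces each $|(Av)_j|$ to equal $1$.

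Next I would set up the symplectic geometry. Equip $\mathbb{CP}^{n-1}$ with the Fubini--Study form $\omega_{\mathrm{FS}}$. The Clifford torus $L$ is a Lagrangian submanifold: it has the correct real dimension $n-1$, and it is precisely the level set over the barycenter of the moment-map image of the residual $T^{n-1}$-action obtained from the standard diagonal $T^n$-action on $\bc^n$ by quotienting out the center. Any $A\in U(n)$ descends to $[A]\in PU(n)$, which acts on $(\mathbb{CP}^{n-1},\omega_{\mathrm{FS}})$ by symplectomorphisms; since $PU(n)$ is compact and connected and $\mathbb{CP}^{n-1}$ is simply connected, $[A]$ is automatically a Hamiltonian diffeomorphism.

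The heart of the argument is then the application of Biran--Entov--Polterovich: the Clifford torus $L\subset\mathbb{CP}^{n-1}$ cannot be displaced from itself by any Hamiltonian diffeomorphism, so $[A](L)\cap L\neq\emptyset$, which yields the desired biunimodular vector $v$ for $A$ after the unwinding above. The real obstacle in this plan is the non-displaceability theorem itself, whose proof rests on heavy machinery from symplectic topology, such as Lagrangian Floer homology with bulk deformations or Calabi quasi-morphisms on $\mathrm{Ham}(\mathbb{CP}^{n-1},\omega_{\mathrm{FS}})$. I would treat it as a black box; the remaining work --- recognising the Clifford torus as a Lagrangian, checking that $PU(n)$ acts Hamiltonianly, and unwinding the identifications --- is essentially formal. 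It would be very interesting to find a proof that bypasses the symplectic machinery, perhaps via degree theory on $T^n$ applied to the map $\theta\mapsto(|(Av(\theta))_j|^2)_j$ (whose average over $\theta$ is already $(1,\dots,1)$ by unitarity of $A$), but I do not see how to make such an argument work in the required generality.
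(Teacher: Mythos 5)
Your proposal is correct and follows essentially the same route the paper itself indicates for Theorem~\ref{IW}: the paper does not prove this statement independently but attributes it to Idel and Wolf, whose argument is exactly your reduction to non-displaceability of the Clifford torus in $\mathbb{CP}^{n-1}$ under the Hamiltonian symplectomorphism induced by $A$, with the Biran--Entov--Polterovich theorem used as the black box. The supporting steps you supply (the rescaling argument using $\|Av\|_2=\|v\|_2$, the Lagrangian property of the Clifford torus, and the Hamiltonian nature of the $PU(n)$-action via $H^1(\mathbb{CP}^{n-1})=0$) are all sound.
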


This settles that scheme (\ref{syn}) for building unitary matrices gives all of them. The scheme has been discovered independently by  De Vos and De Baerdemacker without the notion of biunimodular vectors. In \cite{VB1} they conjectured that every unitary matrix $U$ enjoys a LAR decomposition: $U=LAR$, where $L$ and $R$ are unitary diagonal matrices and $A$ is a unitary doubly stochastic matrix, meaning that the row sums (and thus columns sums) of $A$ are all equal to 1. This, clearly, is equivalent to saying that  every unitary matrix $U$ has a biunimodular vector and has been proved by Idel and Wolf in \cite{IW}. The proof relies on showing that the action of $U$ on the torus $\bt^n$ defines a Hamiltonian symplectomorphism of the corresponding Clifford torus embeded in the complex projective space. The rest follows from \cite{BEP}, where  Biran, Entov and  Polterovich used Calabi quasimorphisms to prove that the Clifford torus cannot be displaced from itself by a Hamiltonian symplectomorphism.  Idel and 
Wolf point also to the work \cite{Cho} of Cho on Floer cohomology that one can combine with Theorem~\ref{IW} to conclude, that every $n\times n$  unitary matrix has at least $2^{n-1}$ biunimodular vectors.

 In the following section we shall examine  the existence of biunimodular vectors from a few different angles.

\section {Biunimodular problem}

The biunimodular problem is to find (or prove the existence of) a biunimodular vector for a given unitary matrix.
With the notation explained below we can restate this task in equivalent terms.

\begin{theorem}
 \label{char}
Let $A \in U(n)$. The matrix $A$ has a biunimodular vector if and only if either of the following holds
\begin{enumerate}
 \item[(a)] $A \in \mathcal{D} \cdot {\rm Fix}(\mathbf{1}) \cdot \mathcal{D}_1$. 
 \item[(a')] $A' \in \mathcal{D'} \cdot {\rm E}(\mathbf{1}) \cdot \mathcal{D'}$. 
 \item[(b)] $\|  A \|_{\infty \to 1} =n$. 
\item[(c)] $\sup _{u\in\bt^n} |\pi_A(u)|=1$.
\item[(d)] $A(\bt^n)\cap\bt^n\not=\emptyset$.
\end{enumerate}
\end{theorem}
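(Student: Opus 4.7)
The plan is to take (d) as the baseline reformulation (it is literally the definition) and to show that each of (a), (a'), (b), (c) is separately equivalent to (d). The equivalence (a) $\Leftrightarrow$ (d) is already essentially contained in the proof of Proposition~\ref{ana}: the existence of $v, w \in \bt^n$ with $Av = w$ is equivalent to $D_w^* A D_v \in {\rm Fix}(\mathbf{1})$, i.e.\ $A = D_w \cdot M \cdot D_v^*$ for some $M \in {\rm Fix}(\mathbf{1})$. Since $D_w$ and $D_v^*$ are diagonal unimodular matrices, this places $A$ in $\mathcal{D}\cdot{\rm Fix}(\mathbf{1})\cdot\mathcal{D}_1$; conversely, any such factorization $A=D_1 M D_2$ immediately yields the biunimodular vector $v:=D_2^*\mathbf{1}$, with image $Av=D_1\mathbf{1}\in\bt^n$. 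Statement (a') is then just (a) conjugated by $F_n$, exploiting $F_n\mathbf{e}=\frac{1}{\sqrt n}\mathbf{1}$ and the resulting identification ${\rm Fix}(\mathbf{1})=F_n\,{\rm Fix}(\mathbf{e})\,F_n^*$, which intertwines the diagonal factors into the primed groups.

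For (b) $\Leftrightarrow$ (d) I would combine Cauchy--Schwarz with unitarity: for every $u$ with $\|u\|_\infty \leq 1$,
$$\|Au\|_1 \leq \sqrt n\,\|Au\|_2 = \sqrt n\,\|u\|_2 \leq n,$$
so $\|A\|_{\infty\to 1}\le n$ always holds. Attaining equality forces $\|u\|_2=\sqrt n$ (hence $u\in\bt^n$) in the last step and $|Au|$ constant in the Cauchy--Schwarz step; combined with $\|Au\|_2=\sqrt n$, this constant must equal $1$, giving $Au\in\bt^n$, which is (d). Conversely, any $u$ witnessing (d) saturates the whole chain. For (c) $\Leftrightarrow$ (d) I would argue by AM--GM against the unitarity identity $\|Au\|_2^2=n$: assuming $\pi_A(u)$ is the natural multiplicative invariant whose modulus reduces to $\prod_j |(Au)_j|$ on $\bt^n$, one has
$$|\pi_A(u)|^2 \leq \prod_{j=1}^n |(Au)_j|^2 \leq \Bigl(\tfrac{1}{n}\sum_{j=1}^n |(Au)_j|^2\Bigr)^n = 1,$$
with equality iff all $|(Au)_j|=1$, i.e.\ $Au\in\bt^n$. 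Hence the supremum reaches $1$ precisely when (d) holds.

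The main obstacle I anticipate is purely notational: pinning down the precise meaning of $\mathcal{D}$, $\mathcal{D}_1$, $\mathcal{D}'$, ${\rm E}(\mathbf{1})$, and especially $\pi_A$, which the excerpt does not fix before the theorem statement. Once those definitions are in hand, each equivalence collapses to a short manipulation --- a factorization rewrite for (a) and (a'), a two-step norm inequality for (b), and one application of AM--GM for (c) --- so no deeper machinery is required beyond what already appears in the proof of Proposition~\ref{ana}.
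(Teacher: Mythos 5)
Your treatment of (a), (b), (c) and (d) follows the paper's proof essentially verbatim: (a) is read off from Proposition~\ref{ana} together with the normalization of the leading entry from Remark~\ref{r1}, (b) is the chain $\|Au\|_1\le\sqrt n\,\|Au\|_2=\sqrt n\,\|u\|_2\le n\|u\|_\infty$ with an analysis of the equality cases, and (c) is exactly the paper's AM--GM estimate $|\pi_A(u)|^2\le\bigl(\tfrac1n\|Au\|_2^2\bigr)^n=1$ (and your guess for the meaning of $\pi_A$ is correct: $\pi_A(u)=\prod_k (Au)_k$). Two small points you gloss over and the paper makes explicit: in the converse directions of both (b) and (c) one needs the supremum to be \emph{attained}, which follows from compactness of the $\ell^\infty$-ball, resp.\ of $\mathbb{T}^n$; and in (a) the right-hand factor $D_v^*$ lies in $\mathcal{D}$ but not automatically in $\mathcal{D}_1$, so one must first replace $v$ by $\overline{v_1}v$ as in Remark~\ref{r1}.

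The genuine gap is (a'). You guessed that (a') is (a) conjugated by $F_n$, with ${\rm E}(\mathbf{1})$ arising by intertwining ${\rm Fix}(\mathbf{e})$; that is not what the statement asserts, and the Fourier conjugation is already internal to the proof of Proposition~\ref{ana} (it identifies ${\rm Fix}(\mathbf{1})$ with $F_n\,{\rm Fix}(\mathbf{e})\,F_n^*$, which belongs to part (a), not (a')). In the paper, $A'=\lambda A$ with $\lambda^n=\overline{\det(A)}$ is the $SU(n)$-normalization of $A$, $\mathcal{D}'=\mathcal{D}\cap SU(n)$, and ${\rm E}(\mathbf{1})$ is the subgroup of $SU(n)$ having $\mathbf{1}$ as an \emph{eigenvector} (not a fixed vector). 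The equivalence with (a) is then determinant bookkeeping: given $A=D_1SD_2$, pick unimodular scalars $\mu,\nu$ with $\mu^n=\overline{\det D_1}$ and $\nu^n=\overline{\det D_2}$; then $A'=(\mu D_1)\bigl(\tfrac{\lambda}{\mu\nu}S\bigr)(\nu D_2)$ has its outer factors in $\mathcal{D}'$ and its middle factor in ${\rm E}(\mathbf{1})$, since that factor sends $\mathbf{1}$ to $\tfrac{\lambda}{\mu\nu}\mathbf{1}$ and has determinant $1$. Conversely, a factorization $A'=D_1'ED_2'$ with $E\mathbf{1}=c\mathbf{1}$, $c\in\mathbb{T}$, exhibits $(D_2')^{*}\mathbf{1}$ as a biunimodular vector for $A$. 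Passing from fixed vectors to eigenvectors is forced here: one cannot in general arrange $S\mathbf{1}=\mathbf{1}$ and $\det S=1$ simultaneously, which is precisely why ${\rm E}(\mathbf{1})$ replaces ${\rm Fix}(\mathbf{1})$ in the $SU(n)$ picture.
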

\begin{prf}

(a) Here,  $\mathcal{D} \subseteq U(n)$ denotes the subgroup of diagonal matrices with diagonal entries in $\mathbb{T}$, and $\mathcal{D}_1 \subseteq \mathcal{D}$ is the subgroup of matrices with the first diagonal entry equal to $1$.  ${\rm Fix}(\mathbf{1}) = \{ S \in U(n): S\mathbf{1} = \mathbf{1} \}$, where $\mathbf{1} = \left(1,1,\dots,1\right)\in\bt^n$.  $A \in \mathcal{D} \cdot {\rm Fix}(\mathbf{1}) \cdot \mathcal{D}_1$ means that $A$ can be written as a product of three matrices from the indicated subgroups of $U(n)$. By Proposition~\ref{ana} and Remark~\ref{r1} this is equivalent to the existence of a biunimodular vector for $A$.

(a') Here, $A'=\la A$ with $\la^n=\ov{\det(A)}$, so $A'\in SU(n)$.  $\mathcal{D'}= \mathcal{D}\cap SU(n)$  and  ${\rm E}(\mathbf{1})$ denotes the subgroup of $SU(n)$ that has $\mathbf{1}$ as an eigenvector. Clearly, (a') is a mere translation of (a) into the special unitary setting.

(b) Here, $\|  A \|_{\infty \to 1}=\sup_{\|u\|_{\infty}=1}\|Au\|_1$ is the norm of $A:(\bc^n,\|\cdot\|_\infty)\to(\bc^n,\|\cdot\|_1)$, where  the usual $\ell^p$-norm of $u\in\mathbb{C}^n$ is given by $\|u\|_p=(\sum_{k=1}^n|u_k|^p)^{\frac1p}$ and $\|u\|_\infty=\sup_{k}|u_k|$.

 For every  $u\in\bc^n$ we have $\|u\|_1\le\sqrt n\|u\|_2\le n\|u\|_\infty$. Therefore, for every $A\in U(n)$ we have 
\begin{equation}\label{string}
 \| A u \|_1 \le \sqrt n \| A u \|_2 = \sqrt n \| u\|_2 \le n\|u\|_\infty,
\end{equation}
proving that $\|  A \|_{\infty \to 1}\le n$. Thus, if $A$ has a biunimodular vector $v$, then $\|Av\|_1=n\|v\|_\infty$ showing that $\|  A \|_{\infty \to 1}= n$.

Conversely, if $\| A \|_{\infty \to 1} = n$, compactness of the $\ell^{\infty}$-ball implies the existence of a vector $u \in \mathbb{C}^n$ with $\| u \|_{\infty} = 1$ and $\| A u \|_1 = n\|u\|_\infty$. As the estimate (\ref{string}) shows,  this equality holds if and only if 
$\| A u \|_1 = \sqrt n \| A u \|_2$ and $\| u\|_2 =\sqrt n\|u\|_\infty$, meaning that both $u$ and $Au$ must be a constant multiple of a unimodular vector. Since $\|u\|_\infty=1$ and  $\| A u \|_1 = n$, both $u$ and $Au$ must be unimodular.

(c) Here, $\pi_A(u)=\pi(Au)$, where $\pi(u)=\prod_{k=1}^nu_k$ for $u\in\bc^n$.

Let $A\in U(n)$, $u\in\bt^n$ and $Au=w$. We have
\begin{equation}\label{pi}
|\pi_A(u)|^2=\prod_{k=1}^n|w_k|^2\le \left(\frac1n\|w\|_2^2\right)^n=\left(\frac1n\|u\|_2^2\right)^n=1,
\end{equation}
so $|\pi_A(u)|\le 1$.  Thus, if $A$ has a biunimodular vector $v$, then $|\pi_A(v)|=1$ showing that  $\sup _{u\in\bt^n} |\pi_A(u)|=1$.

Conversely, if $\sup _{u\in\bt^n} |\pi_A(u)|=1$, compactness of $\bt^n$ implies the existence of a vector $u \in\bt^n$ with $ |\pi_A(u)|=1$. In light of (\ref{pi}) this means that $1=\prod_{k=1}^n|w_k|^2=\left(\frac1n\|w\|_2^2\right)^n$, i.e. $w=Au$ must be unimodular.

(d) Clearly, a vector $v$ is biunimodular for $A\in U(n)$ iff $Av\in A(\bt^n)\cap\bt^n$, so the equivalence in this case is trivial. 
\end{prf}

\begin{remark}\label{remchar}
Part (a) of Theorem~\ref{char} is equivalent to  saying that $A \in \mathcal{D} \cdot {\rm Fix}(\mathbf{1}) \cdot \mathcal{D}$. Thus, Theorem~\ref{IW} is equivalent to the decomposition $U(n)=\mathcal{D} \cdot {\rm Fix}(\mathbf{1}) \cdot \mathcal{D}$. Unfortunately, there is no general theory explaining when two subgroups $H$, $H'$ of a Lie group $G$ yield the decomposition $G=H\cdot H'\cdot H$. Moreover, the Cartan decomposition and Kostant decomposition of any semisimple Lie group G, that both are given in this form,  indicate potential difficulties in developing such a general explanation. 

For part (a') included in the above characterization, we would like to notice that $\mathcal{D'}$ is a maximal torus subgroup of a compact simple Lie group $SU(n)$. Thus, Theorem~\ref{IW} is equivalent to the following maximal torus decomposition:
\[
SU(n)= \mathcal{D'} \cdot {\rm E}(\mathbf{1}) \cdot \mathcal{D'},
\]
with $\dim (SU(n))=\dim({\rm E}(\mathbf{1}))+2\dim( \mathcal{D'})$. This allows to conjecture, or ask, whether every compact simple Lie group $G$ admits a  maximal torus decomposition $G=T\cdot H\cdot T$, with a maximal torus $T$ of $G$ and a subgroup $H\subseteq G$ such that $\dim G=\dim H+2\dim T$.

Regarding part (b), as in \cite{GiRz}, an interpolation argument can be used to conclude that $\| A \|_{\infty \to 1} = n$ is in turn equivalent to saying that 
 $\| A \|_{p \to q} = n^{\frac1q-\frac1p}$ in the region $1\le q\le2\le p\le \infty$. This further amplifies the interest in biunimodular vectors, that are  precisely those vectors, where the norm  $\| A \|_{p \to q} = n^{\frac1q-\frac1p}$ is attained, in the indicated range of $p$ and $q$. 

Part (c) of the theorem is equivalent to $\sup _{u\in\bd^n} |\pi_A(u)|=1$, where $\bd=\{z\in\bc: |z|\le 1\}$. Thus, the biunimodular problem can be viewed as an extremum problem for the modulus of a polynomial $\pi_A:\bd^n\to \bd$.

 Part (d) of the characterization provides a deceptively simple geometric interpretation of the problem: Describe the intersection of two tori. This geometric angle has been exploited to prove Theorem~\ref{IW}.
\end{remark}

With the above characterization we close the motivational part of the paper and present our findings on the biunimodular problem.

\section {$U(2)$}\label{secu2}

Obtaining  biunimodular vectors of a given unitary $2\times2$ matrix is an easy task.
\begin{proposition}\label{u2}
Every matrix in $U(2)$ has a biunimodular vector.
\end{proposition}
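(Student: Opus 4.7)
The plan is to look for biunimodular vectors in the normalized form $v=(1,e^{i\theta})$, exploiting Remark~\ref{r1}, and show that the two modulus conditions on $Av$ collapse to a single scalar equation that is always solvable.

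Write $A=\begin{pmatrix}a&b\\ c&d\end{pmatrix}\in U(2)$. Unitarity forces $|a|^2+|b|^2=|c|^2+|d|^2=1$ and, crucially, the off-diagonal orthogonality relation $\bar ab+\bar cd=0$. For a candidate $v=(1,e^{i\theta})$, I would compute
\[
|a+be^{i\theta}|^2=1+2\,\mathrm{Re}(\bar a b\,e^{i\theta}),\qquad |c+de^{i\theta}|^2=1+2\,\mathrm{Re}(\bar c d\,e^{i\theta}).
\]
Requiring both to equal $1$ becomes $\mathrm{Re}(\bar a b\,e^{i\theta})=0=\mathrm{Re}(\bar c d\,e^{i\theta})$, and the unitarity relation $\bar cd=-\bar ab$ makes the second condition redundant.

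It then remains to find $\theta$ solving $\mathrm{Re}(\bar a b\,e^{i\theta})=0$. Writing $\bar a b=re^{i\phi}$ with $r\ge 0$, either $r=0$ (in which case every $\theta$ works, and indeed every unimodular vector is biunimodular), or $r>0$, in which case $\theta=\tfrac{\pi}{2}-\phi$ does the job. In either case we produce an explicit biunimodular vector.

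There is no real obstacle here: the essential input is the orthogonality of the columns of $A$, which forces the two seemingly independent conditions on the components of $Av$ to become one. I would expect to deliver the proof in a few lines, and note that the argument also yields two biunimodular vectors (up to the $\mathbb{T}$-action from Remark~\ref{r1}) whenever $\bar ab\neq 0$, consistent with the lower bound of $2^{n-1}=2$ mentioned after Theorem~\ref{IW}.
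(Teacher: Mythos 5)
Your proof is correct and is essentially the same as the paper's: the paper parametrizes $A$ so that the second row is automatically $(-z\bar y, z\bar x)$, which builds in exactly the column-orthogonality you invoke, and then reduces both modulus conditions to the single equation $\mathrm{Re}(\bar x y d)=0$, solved explicitly just as you do. Your case analysis ($\bar a b=0$ versus $\bar a b\neq 0$) and the resulting two solutions also match the paper's formula (\ref{sol2}).
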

\begin{prf}
Every matrix $A\in U(2)$ can be written as
\[
A=\begin{bmatrix}
x & y \\
-z\ov y & z\ov x
\end{bmatrix},
\]
with $z\in\bt$ and  $x,y\in\bc$ such that $|x|^2+|y|^2=1$. Finding a biunimodular vector $(1,d)$ for $A$ amounts to solving a system of two equations: 
$|x+yd|=1$ and $|\ov y-\ov x d|=1$. Since  $|x|^2+|y|^2=|d|=1$ the system is equivalent to the equation $\re (\ov x y d)=0$. Therefore, we see that if $xy=0$ then every vector $(1,d)$ with $d\in\bt$ is biunimodular for $A$. While for $xy\not=0$ the matrix $A$ has precisely two biunimodular vectors  with the leading 1 entry:
\begin{equation}\label{sol2}
(1, i e^{i(\al-\be)}), \hskip2cm (1,- i e^{i(\al-\be)}),
\end{equation}
where $\al=\arg x$ and $\be=\arg y$.
\end{prf}

As we indicated in Section \ref{s3}, the existence of biunimodular vectors for every matrix in $U(2)$ allows for writing an alternative formula describing $U(2)$. 
\begin{corollary}\label{u2f}
\begin{equation}\label{u2for}
U(2)=\left\{\frac 12\begin{bmatrix}
a(1+z) & ac(1-z) \\
b(1-z) & bc(1+z)
\end{bmatrix}: a,b,c,z\in\bt\right\}
\end{equation}
\end{corollary}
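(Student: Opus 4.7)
The plan is to combine Proposition \ref{u2} (every matrix in $U(2)$ admits a biunimodular vector) with Proposition \ref{ana} (which gives the explicit synthesis formula for any unitary matrix in terms of a biunimodular vector). Together these yield the $\subseteq$ inclusion immediately, once we specialize the formula to $n=2$. The $\supseteq$ inclusion is automatic from Proposition \ref{ana} as well, since any matrix written in the form on the right-hand side of (\ref{u2for}) is exhibited as a product of unitary matrices.

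For the forward inclusion, fix $A \in U(2)$ and use Proposition \ref{u2} to obtain a biunimodular vector $v=(v_1,v_2)$ with $Av=w=(w_1,w_2)$; by Remark \ref{r1} we may normalize to $v_1=1$. The inner block $B$ from Proposition \ref{ana} lives in $U(n-1)=U(1)=\bt$, so $B=[z]$ for some $z\in\bt$. The crucial calculation is
\[
F_2 \begin{bmatrix} 1 & 0 \\ 0 & z \end{bmatrix} F_2^* \;=\; \frac{1}{2}\begin{bmatrix} 1 & 1 \\ 1 & -1 \end{bmatrix}\begin{bmatrix} 1 & 0 \\ 0 & z \end{bmatrix}\begin{bmatrix} 1 & 1 \\ 1 & -1 \end{bmatrix}\;=\;\frac{1}{2}\begin{bmatrix} 1+z & 1-z \\ 1-z & 1+z \end{bmatrix}.
\]
Sandwiching this between $\diag(w_1,w_2)$ on the left and $\diag(1,\ov{v_2})$ on the right and relabelling $a:=w_1$, $b:=w_2$, $c:=\ov{v_2}$ produces exactly the matrix displayed in (\ref{u2for}), with all four parameters in $\bt$.

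For the reverse inclusion, given any $a,b,c,z\in\bt$ one unwinds the same identity: the stated matrix equals $\diag(a,b)\cdot F_2\diag(1,z)F_2^*\cdot\diag(1,c)$, which is a product of unitary matrices and hence lies in $U(2)$. Alternatively, a direct check that the two rows have unit $\ell^2$-norm and are orthogonal (using $|1+z|^2+|1-z|^2=4$ and $\re((1+z)\ov{(1-z)})=1-|z|^2=0$) confirms unitarity by hand. There is no real obstacle here beyond keeping the bookkeeping of the three diagonal conjugations straight; the content of the corollary is really just the $n=2$ instance of the synthesis formula (\ref{syn}), made explicit by the simple shape of $F_2$.
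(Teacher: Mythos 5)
Your proposal is correct and follows essentially the same route as the paper: existence of a biunimodular vector from Proposition~\ref{u2}, the factorization of Proposition~\ref{ana} (equivalently Theorem~\ref{char}(a)), and the computation $F_2\,\mathrm{diag}(1,z)\,F_2^*=\frac12\begin{bmatrix}1+z&1-z\\1-z&1+z\end{bmatrix}$. The explicit unitarity check for the reverse inclusion is a harmless addition that the paper leaves implicit.
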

\begin{prf}
Let $A\in U(2)$. Since $A$ has a biunimodular vector, by Theorem~\ref{char} we have that $A\in  \mathcal{D} \cdot {\rm Fix}(\mathbf{1}) \cdot \mathcal{D}_1$. Clearly, in this case
\[
\fix\one=\left\{\frac12\begin{bmatrix}
1 & 1 \\
1 & -1
\end{bmatrix}
\begin{bmatrix}
1 & 0 \\
0 & z
\end{bmatrix}
\begin{bmatrix}
1 & 1 \\
1 & -1
\end{bmatrix}: z\in\bt\right\}=\left\{\frac12
\begin{bmatrix}
1+z & 1-z \\
1-z & 1+z
\end{bmatrix}: z\in\bt\right\},
\]
and therefore 
\[
A=\frac 12\begin{bmatrix}
a & 0 \\
0 & b
\end{bmatrix}
\begin{bmatrix}
1+z & 1-z \\
1-z & 1+z
\end{bmatrix}
\begin{bmatrix}
1 & 0 \\
0 & c
\end{bmatrix},
\]
with $ a,b,c,z\in\bt$ as desired.
\end{prf}

To close this section we point out that the above observations follow, as well, from writing an arbitrary matrix in $U(2)$ as  
$\begin{bmatrix}
a & 0 \\
0 & b
\end{bmatrix}
\begin{bmatrix}
\cos\vp& \sin\vp \\
-\sin\vp & \cos\vp
\end{bmatrix}
\begin{bmatrix}
1 & 0 \\
0 & c
\end{bmatrix}$ with some $a,b,c\in\bt$, $\vp\in[0,2\pi]$ and noting that 
$\begin{bmatrix}
\cos\vp& \sin\vp \\
-\sin\vp & \cos\vp
\end{bmatrix}=\frac{e^{-i\vp}}2
\begin{bmatrix}
1 & 0 \\
0 & -i
\end{bmatrix}
\begin{bmatrix}
1+z & 1-z \\
1-z & 1+z
\end{bmatrix}
\begin{bmatrix}
1 & 0 \\
0 & i
\end{bmatrix}$ with $z=e^{i2\vp}$ .

\section {$U(3)$}

 Some elementary calculations allow to conclude that 
all orthogonal matrices in $O(3)$ have a biunimodular vector. Another easy case is given in the following.

\begin{example}\label{3zero}
For $A\in U(3)$ with at least one zero entry we shall explain how to exhibit its biunimodular vectors. By interchanging rows and columns of $A$ we can assume that there is a zero entry in the top right corner of $A$. Moreover, we can multiply rows and columns of $A$ by unimodular constants in such a way that the outcome shall have only real entries.
Therefore, we can assume that $A$ is given as
\[
\begin{bmatrix}
\cos \al & \sin\al & 0\\
x\sin \al & -x\cos\al &y\\
y\sin \al & -y\cos\al &-x
\end{bmatrix}
\]
with $\al\in [0,2\pi]$ and $x,y\in\br$ such that $x^2+y^2=1$. A simple check reveals that such a matrix has at least four distinct biunimodular vectors with the leading 1 entry:
\[
(1, i ,e^{i\al}), \hskip1cm (1,- i ,e^{-i\al}), \hskip1cm
(1, i ,-e^{i\al}), \hskip1cm (1,- i ,-e^{-i\al}).
\]
Moreover, if $A$ has only one zero entry, then there are no other biunimodular vectors for $A$. It is also clear, that when $A$ has at least two zero entries, then  at least one of its entries has modulus 1 leading to a trivialization of $A$ and a continuum of biunimodular vectors. 
\end{example}

\begin{remark}
Whenever we talk about the number of biunimodular vectors for a given unitary matrix, we concentrate only on  vectors with the leading entry equal to 1, as it was indicated in Remark~\ref{r1}. The above example explains the structure and the number of biunimodular vectors for $A\in U(3)$ in the case when $A$ has  at least one zero entry. In the case when all entries of $A\in U(3)$ are nonzero,  we have only found examples of matrices with the number of biunimodular vectors equal to 4,5 or 6.
\end{remark}

In order to exhibit the method allowing to count biunimodular vectors of a given matrix $A\in U(3)$, let us consider a vector $u=u_{xy}=(1, e^{ix},e^{iy})$ and the square $Q=[-\pi,\pi]^2$. The idea is to look at three regions $R_j=\{(x,y)\in Q:|Au(j)|\ge1\}$, $j=1,2,3$ and check the intersection of their boundaries.

For the Fourier case $A=F_3$ the corresponding three regions are given in Fig.\ref{fig0}. The points where the boundaries of these regions intersect correspond to biunimodular vectors of $A$.  From the unitarity of $A$ it follows that every intersection point of two such boundaries belongs to the boundary of the third region as well, allowing for a visual count of biunimodular vectors. 
\begin{figure}
                \centering
                \includegraphics[width=6cm]{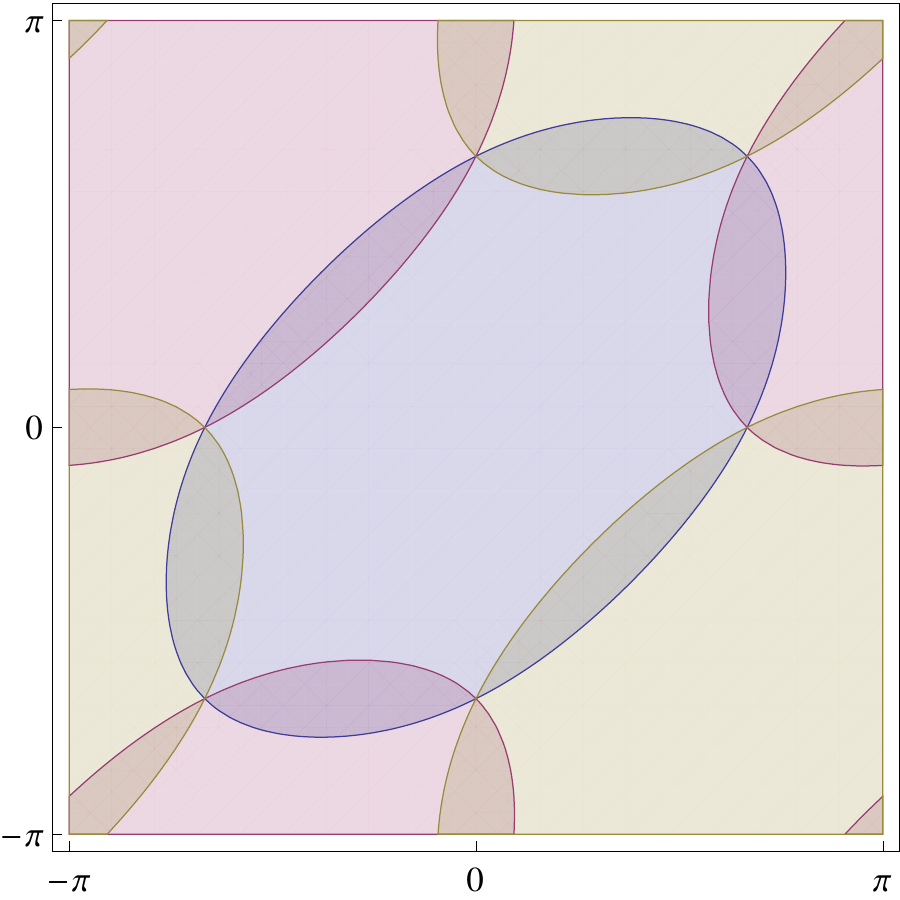}
                \caption{The regions $R_j$, $j=1,2,3$ for the Fourier case $F_3$.}
                \label{fig0}
        \end{figure}
The next figure shows these regions  for three exemplary unitary matrices providing the mentioned count of biunimodular vectors (conducted and plotted with {\it Mathematica}).
\begin{figure}
        \centering
        \begin{subfigure}[b]{0.25\textwidth}
                \centering
                \includegraphics[width=\textwidth]{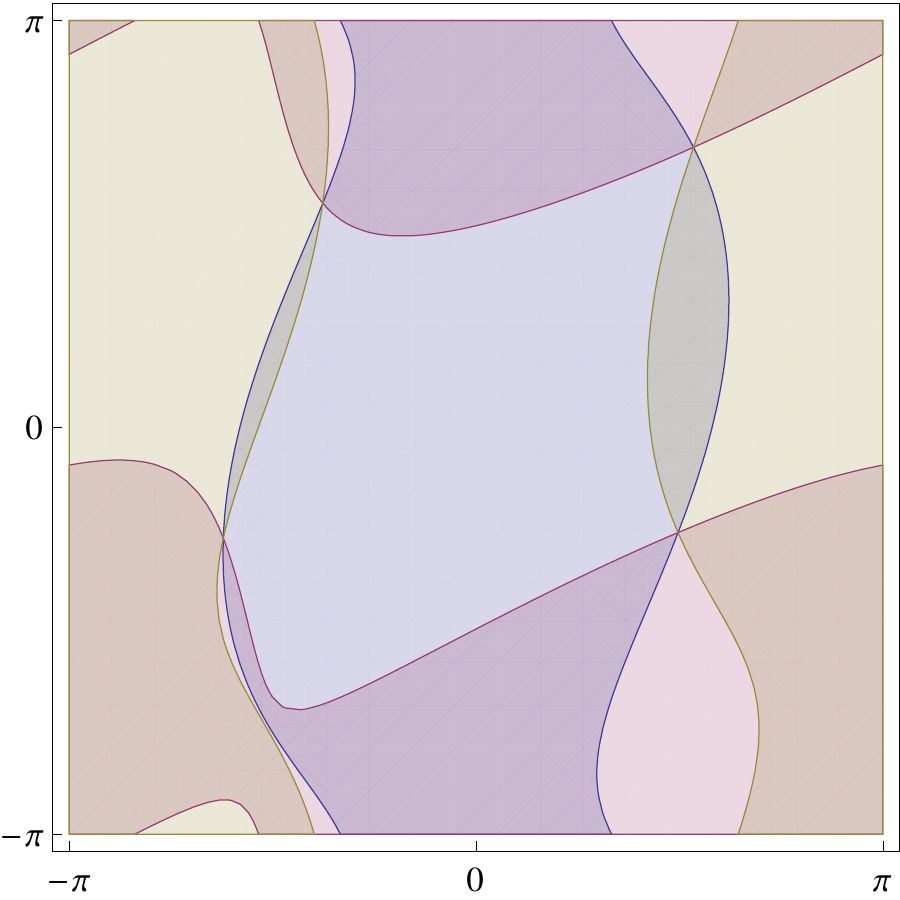}
        \end{subfigure}\quad\quad
      \begin{subfigure}[b]{0.25\textwidth}
                \centering
                \includegraphics[width=\textwidth]{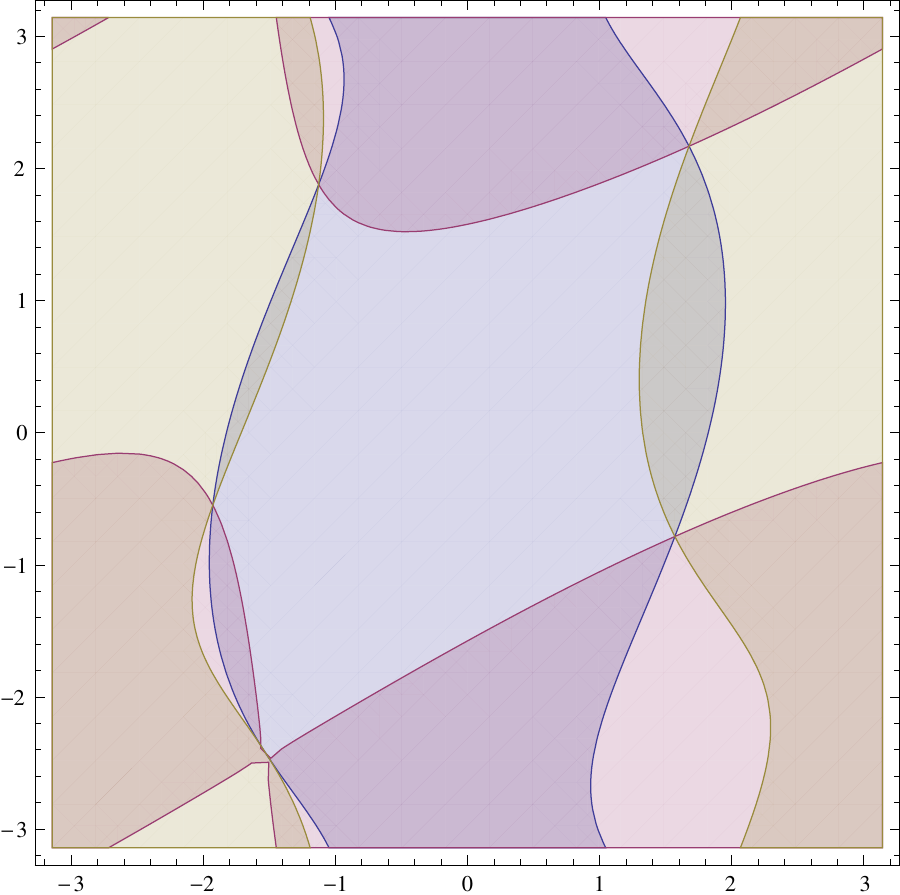}
        \end{subfigure}\quad\quad
        \begin{subfigure}[b]{0.25\textwidth}
                \centering
                \includegraphics[width=\textwidth]{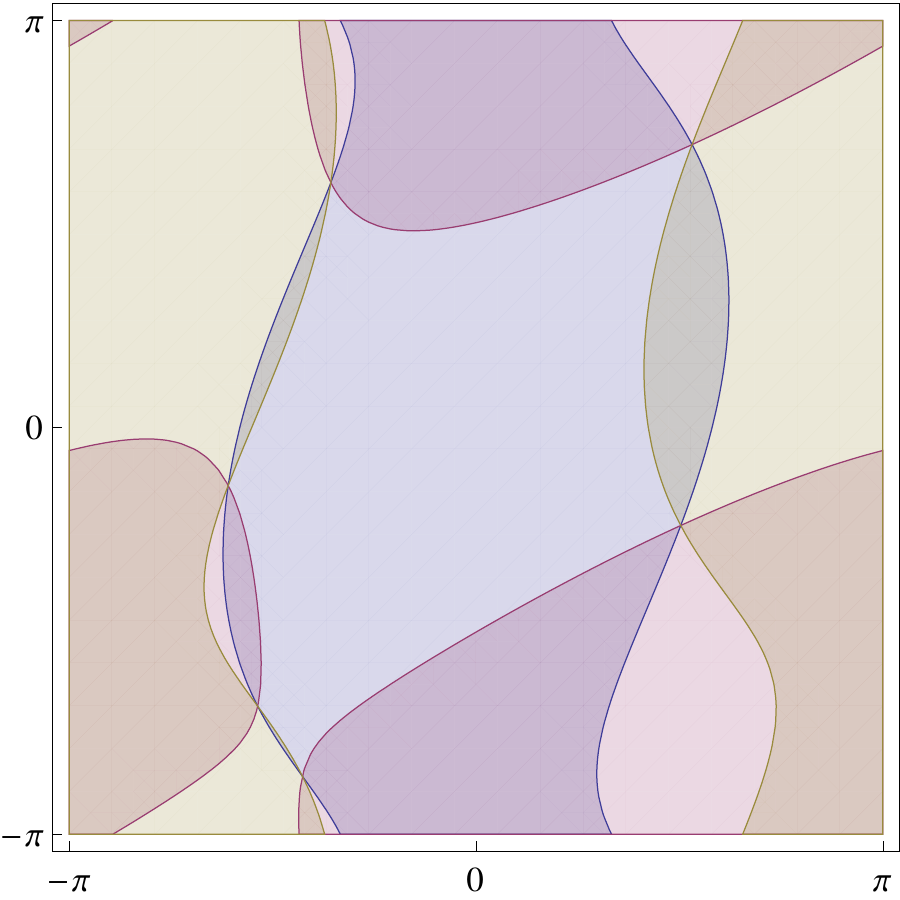}
        \end{subfigure}
        \caption{Counting biunimodular vectors in dimension $n=3$.}\label{fig1}
\end{figure}

 In general, since $A$ is unitary these closed regions cover $Q$, and therefore, any of these regions must intersect with at least one of the two other regions. Unfortunately, it is hard to argue that at least one pair of the regions intersects in such a way that their boundaries also intersect, closing a natural way to prove Theorem~\ref{u3}. This obstacle boils down to the lack of proof for a classification of the regions $R_j$, that essentially can be simplified to 
\[
E_{st}=\{(x,y)\in Q:s\cos(x)+ t\cos(y)+\cos(x-y)\ge0\}, \quad s,t>0
\]
and are hard to study.

A working proof that all $A\in U(3)$ have biunimodular vectors, that we give below, relies on findings of Section~\ref{sec2n}, that are included in Appendix~A.

\begin{theorem}\label{u3}
Every matrix in $U(3)$ has a biunimodular vector.
\end{theorem}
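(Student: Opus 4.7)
The plan is to reduce the assertion to the four-dimensional case and then invoke the synthesis for $U(2^n)$ established in Section~\ref{sec2n}. Given $A\in U(3)$, I form the block-diagonal unitary $\tilde A\in U(4)$ whose upper-left $3\times 3$ block is $A$ and whose fourth diagonal entry is $1$.

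Since $U(4)=U(2^2)$, the synthesis of Section~\ref{sec2n} applies to $\tilde A$: by Proposition~\ref{ana} together with Remark~\ref{r1}, being obtainable from the recursion~(\ref{syn}) is equivalent to possessing a biunimodular vector, so there exists $\tilde v=(v_1,v_2,v_3,v_4)\in\bt^4$ with $\tilde A\tilde v\in\bt^4$. The last coordinate of $\tilde A\tilde v$ is simply $v_4$, which is automatically unimodular, while the first three coordinates coincide with the entries of $A(v_1,v_2,v_3)$. Hence both $(v_1,v_2,v_3)\in\bt^3$ and $A(v_1,v_2,v_3)\in\bt^3$, so $(v_1,v_2,v_3)$ is a biunimodular vector for $A$.

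The real work is therefore displaced to Appendix~A, where the inductive construction of the $U(2^n)$ synthesis is carried out. The main obstacle is precisely what motivates this detour: the seemingly direct planar approach of this section, studying how the regions $R_j=\{(x,y)\in Q:|Au(j)|\ge 1\}$ and their boundaries intersect, refuses to close. Even though unitarity of $A$ forces the three closed regions to cover $Q$, I see no elementary way to rule out configurations in which no pair of boundaries $\partial R_j\cap\partial R_k$ meets in a point also lying on the third boundary; as the excerpt indicates, this reduces to a hard classification of sublevel sets of the form $E_{st}$. The lift to $U(4)$ sidesteps this geometric obstacle, replacing it by the algebraic synthesis of Section~\ref{sec2n}, after which the extraction of a $U(3)$-biunimodular vector from a $U(4)$-one is trivial.
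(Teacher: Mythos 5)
Your reduction has a fatal gap at its very first substantive step: the claim that Section~\ref{sec2n} shows that every matrix in $U(4)$ possesses a biunimodular vector. Theorem~\ref{bimat} does not assert this. It establishes the block decomposition $U=\frac12\,\mathrm{diag}(A,B)\begin{bmatrix}I+Z&I-Z\\I-Z&I+Z\end{bmatrix}\mathrm{diag}(I,C)$ with \emph{unitary matrix} parameters $A,B,C,Z\in U(n)$, i.e.\ it solves the block analogue~(\ref{bima}) of the biunimodular problem, in which the role of unimodular scalars is played by unitary matrices. The factor $\mathrm{diag}(A,B)$ is block diagonal, not diagonal, so iterating the construction down to scalar parameters yields the $U(4)$ formula of Appendix~A, which is a genuinely different factorization from the one in Proposition~\ref{ana}; it is not of the form $D_w F_4(1\oplus B)F_4^*D_{\bar v}$ and carries no information about the existence of a vector $v\in\bt^4$ with $\tilde A v\in\bt^4$. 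Indeed, if your argument were valid it would prove Theorem~\ref{IW} in every dimension (pad any $A\in U(m)$ with an identity block up to size $2^n$ and restrict), whereas the paper states explicitly that for $n\ge4$ the only known arguments rely on the symplectic result of Idel--Wolf and Biran--Entov--Polterovich. Your final extraction step (a biunimodular vector for $\mathrm{diag}(A,1)$ restricts to one for $A$) is correct but irrelevant, since the existence of the $U(4)$ vector is never established.

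For comparison, the paper does route its proof through the $U(4)$ formula of Appendix~A, but in the opposite direction: setting $u_{11}=1$ in that formula yields a surjective parametrization of $U(3)$ by matrices $T_{(\al,\be,\ga,z)}$ up to diagonal factors (Observation~\ref{obsa3}). The existence of a biunimodular vector for $T_{(\al,\be,\ga,z)}$ is then proved by reducing to a one-parameter family of $U(2)$ biunimodular problems with an extra constraint, and closing the argument with a continuity/intermediate-value argument on the functions $f_1,f_2,g$ of the auxiliary phase $m$. That analytic step is the real content of Theorem~\ref{u3}, and nothing in your proposal replaces it.
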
 
\begin{prf}
In Appendix~A (Observation~\ref{obsa3}) we prove that 
\[
U(3)=\{\diag(\la_1,\la_2,\la_3)T_{(\al,\be,\ga,z)}\diag(1,\la_4,\la_5): \, \al,\be,\ga\in[0,2\pi]; z,\la_j\in\bt, j=1,\dots,5\},
\]
where
\[
T_{(\al,\be,\ga,z)}=\begin{bmatrix} \cos\al &-\sin\al\sin\ga&\sin\al\cos\ga\\
-\sin\al\sin\be&z\cos\be\cos\ga-\cos\al\sin\be\sin\ga&z\cos\be\sin\ga+\cos\al\sin\be\cos\ga\\
\sin\al\cos\be&z\sin\be\cos\ga+\cos\al\cos\be\sin\ga&z\sin\be\sin\ga-\cos\al\cos\be\cos\ga
\end{bmatrix}.
\]
Thus, in order to prove that every matrix in $U(3)$ has a biunimodular vector, it is enough to show it for the matrix $T_{(\al,\be,\ga,z)}$.

Therefore, we want to find $x,y\in\br$ such that $T_{(\al,\be,\ga,z)}$ applied to the vector $(1,e^{ix},e^{iy})$ yields a unimodular outcome $v\in\bt^3$. The first entry of this outcome is given by  $\cos\al +w\sin\al$ with $w=-e^{ix}\sin\ga+e^{iy}\cos\ga$. By applying $\begin{bmatrix} \cos\al &\sin\al\\ \sin\al &-\cos\al\end{bmatrix}$ to $(1,w)$, we see that $|\cos\al +w\sin\al|=1$ iff $|w|=|\sin\al-w\cos\al|$ iff $\sin\al-w\cos\al=e^{im}w$ for some $m\in\br$. Thus, we can add $m$ as another variable to our problem and require that $\sin\al-w\cos\al=e^{im}w$.

Then, the second entry of $v$ is given as $v_2= uz\cos\be-e^{im}w\sin\be$, where $u=e^{ix}\cos\ga+e^{iy}\sin\ga$. And $v_3=uz\sin\be+e^{im}w\cos\be$. In short,
\begin{equation}\label{twist}
\begin{bmatrix}v_2\\ v_3\end{bmatrix}= \begin{bmatrix} \cos\be &-\sin\be\\ \sin\be &\cos\be\end{bmatrix} \begin{bmatrix} z &0\\0 &e^{im}\end{bmatrix} \begin{bmatrix} \cos\ga &\sin\ga\\ -\sin\ga &\cos\ga\end{bmatrix}\begin{bmatrix}e^{ix}\\ e^{iy}\end{bmatrix}:=U_m \begin{bmatrix}e^{ix}\\ e^{iy}\end{bmatrix}
\end{equation}
and we need to find $x,y,m\in\br$ such that $|v_2|=|v_3|=1$ and $\sin\al-w\cos\al=e^{im}w$, with $w=-e^{ix}\sin\ga+e^{iy}\cos\ga$. Since (\ref{twist}) reads $U_m(e^{ix},e^{iy})\in\bt^2$ with $U_m\in U(2)$, we can recognize the task of finding $x,y,m$ as a biunimodular problem with a certain twist.

By looking at (\ref{sol2}) we can simplify the above problem. Indeed, by Proposition~\ref{u2}, for every $m\in\br$ we can find at least two vectors $(1,\pm ie^{i\varphi_m})$ such that $U_m(1,\pm ie^{i\varphi_m})\in\bt^2$, giving $|v_2|=|v_3|=1$. 
Thus, let us consider $$w_j(m)=-\sin\ga+(-1)^jie^{i\varphi_m}\cos\ga$$ with $j=1,2$ and concentrate on the final  equation $\sin\al-w_j(m)\cos\al=e^{im}w_j(m)$.

 Clearly, if $|\cos \al|=1$, then the matrix  $T_{(\al,\be,\ga,z)}$ is simple enough to have a biunimodular vector, see Remark \ref{3zero}. Hence, we assume $|\cos \al|<1$ and claim that  (\ref{twist}) has a desired solution $x,y,m\in\br$, as long as we can find $m_0\in\br$ and $j_0\in\{1,2\}$ such that 
\begin{equation}\label{sincos}
|w_{j_0}(m_0)|=\left|\frac{\sin \al}{e^{im_0}+\cos\al}\right|
\end{equation}
Indeed, after establishing a solution to (\ref{sincos}), we can find $x_0\in\br$ such that $$e^{ix_0}w_{j_0}(m_0)=\frac{\sin \al}{e^{im_0}+\cos\al},$$
so  $\sin\al-e^{ix_0}w_{j_0}(m_0)\cos\al=e^{im}e^{ix_0}w_{j_0}(m_0)$, leading to the conclusion that $m_0, j_0$ and $x_0$ solve  (\ref{twist}) by setting
$(e^{ix},e^{iy})=e^{ix_0}(1,(-1)^{j_0}ie^{i\varphi_{m_0}})$.

Thus, we are left with solving  (\ref{sincos}). Here, it is important to notice that the biunimodular vectors $(1, ie^{i\varphi_m})$ and  $(1,- ie^{i\varphi_m})$ are orthogonal. Thus, if we define $f_j(m)=|w_j(m)|^2$ for $j=1,2$ and $m\in[0,2\pi]$, then $f_1(m)+f_2(m)=2\|(-\sin\ga,\cos\ga)\|_2^2=2$. Moreover, there is an $m_1\in[0,2\pi]$ such that $e^{im_1}=z$, and then $\varphi_{m_1}=0$ by (\ref{sol2}), so $f_1(m_1)=f_2(m_1)=1$. Finally, let us notice that for $g(m)=\left|\frac{\sin \al}{e^{im}+\cos\al}\right|^2$ one has $\inf g=\left(\frac{|\sin \al|}{1+|\cos\al|}\right)^2\le 1$ and $\sup g=\left(\frac{|\sin \al|}{1-|\cos\al|}\right)^2\ge 1$. 

This allows to finish the proof. Indeed, if $f_1>g$ and $f_2>g$, then $1>g$ contradicting $\sup g\ge 1$. Similarly, if $f_1<g$ and $f_2<g$, then $1<g$ contradicting $\inf g\le 1$. Thus, there is an $m_2\in[0,2\pi]$ such that $g(m_2)$ is between $f_1(m_2)$ and $f_2(m_2)$. However, $g$ can not be strictly between $f_1$ and $f_2$ on $[0,2\pi]$, because $f_1(m_1)=f_2(m_1)$. Therefore, there is an $m_0\in[0,2\pi]$ such that either $g(m_0)=f_1(m_0)$ or $g(m_0)=f_2(m_0)$.
\end{prf}

\begin{remark} Since for any vector $R\in\bc^n$ with $\|R\|_2=1$ the average $\int_{\bt^n}|\<R,u\>|^2du$ is equal to one, it is easy to see, that for such $R$ there is a $u\in\bt^n$, such that $|\<R,u\>|=1$.

 For two orthonormal vectors $R_1,R_2\in\bc^n$ finding a  vector $u\in\bt^n$ such that $|\<R_1,u\>|=|\<R_2,u\>|=1$ seems to be a much harder task, and Theorem~\ref{u3} is equivalent to accomplishing this task with $n=3$. Currently, the only arguments extending this property for $n\ge 4$ rely on Theorem~\ref{IW}. For orthonormal $R_1,R_2\in  \ell^1(\bz)\subset\ell^2(\bz)$ the problem is open.
\end{remark}
As in the $U(2)$ case, the existence of biunimodular vectors for all members of $U(3)$ allows to write a formula for an arbitrary matrix in $U(3)$.

\begin{corollary}\label{u3f}
$A\in U(3)$ if and only if
\[
 A=\frac16\begin{bmatrix}
\la_1 & 0&0 \\
0 & \la_2&0\\
0&0&\la_3
\end{bmatrix}
 \begin{bmatrix}
1 & 1&1 \\
1 &\ov\om & \om \\
1 &\om & \ov\om
\end{bmatrix}
 \begin{bmatrix}
2 & 0&0 \\
0 &a(1+z) & ac(1-z) \\
0&b(1-z) & bc(1+z)
\end{bmatrix}
 \begin{bmatrix}
1 & 1&1 \\
1 &\om &\ov \om \\
1 &\ov\om & \om
\end{bmatrix}
\begin{bmatrix}
1 & 0&0 \\
0 & \la_4&0\\
0&0&\la_5
\end{bmatrix}
\]
for some parameters $a,b,c,z,\la_j\in\bt$, $j=1,\dots,5$ and  $\om=e^{\frac{2\pi i}3}$.
\end{corollary}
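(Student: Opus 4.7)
The plan is to chain together three results already established in the paper. First, Theorem~\ref{u3} guarantees that every $A\in U(3)$ admits a biunimodular vector $v$, which by Remark~\ref{r1} may be normalized so that $v=(1,v_2,v_3)\in\bt^3$; set $w=Av\in\bt^3$. Second, Proposition~\ref{ana} converts the existence of $v$ into the explicit block decomposition
\[
A=D_w\cdot F_3\cdot\begin{bmatrix} 1 & 0\\ 0 & B\end{bmatrix}\cdot F_3^{*}\cdot D_{\ov v},
\]
for some $B\in U(2)$, where $D_u$ denotes the diagonal matrix with diagonal entries $u$. Third, Corollary~\ref{u2f} expresses the middle block as
\[
B=\frac12\begin{bmatrix} a(1+z) & ac(1-z)\\ b(1-z) & bc(1+z)\end{bmatrix},\qquad a,b,c,z\in\bt.
\]

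The rest is substitution. Setting $\om=e^{2\pi i/3}$, the explicit forms
\[
\sqrt3\,F_3=\begin{bmatrix} 1 & 1 & 1\\ 1 & \ov\om & \om\\ 1 & \om & \ov\om\end{bmatrix},\qquad \sqrt3\,F_3^{*}=\begin{bmatrix} 1 & 1 & 1\\ 1 & \om & \ov\om\\ 1 & \ov\om & \om\end{bmatrix}
\]
plug into the first display and produce an overall prefactor $\tfrac1{\sqrt3}\cdot\tfrac1{\sqrt3}\cdot\tfrac12=\tfrac16$. The residual factor $\tfrac12$ from $B$ is absorbed into the central block-diagonal matrix by multiplying its $(1,1)$ entry by $2$, producing exactly the ``$2$'' in the top-left corner of the middle matrix of the stated formula. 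Relabeling $D_w=\diag(\la_1,\la_2,\la_3)$ and $D_{\ov v}=\diag(1,\la_4,\la_5)$ with $\la_j\in\bt$ then matches the right-hand side of the corollary.

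For the reverse inclusion, each factor on the right-hand side is unitary once the scalar $\tfrac16$ is redistributed as $\tfrac1{\sqrt3}\cdot\tfrac1{\sqrt3}\cdot\tfrac12$: the outer diagonal matrices lie in $\mathcal D$, the two Fourier-type matrices become $F_3$ and $F_3^{*}$, and the rescaled middle block has the form $\diag(1,U)$ with $U\in U(2)$ by Corollary~\ref{u2f} applied in reverse. Hence every expression of the stated form lies in $U(3)$.

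No substantive obstacle is anticipated: the corollary reduces to a direct bookkeeping exercise once Theorem~\ref{u3}, Proposition~\ref{ana}, and Corollary~\ref{u2f} are available. The only mild care needed is tracking the scalar factors $\tfrac1{\sqrt3}$ and $\tfrac12$ so that the ``$2$'' at the top left of the middle matrix is correctly accounted for.
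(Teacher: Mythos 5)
Your proposal is correct and follows essentially the same route as the paper: existence of a biunimodular vector from Theorem~\ref{u3}, the factorization of Proposition~\ref{ana} (equivalently, Theorem~\ref{char}(a)), the description of the $U(2)$ block from Corollary~\ref{u2f}, and the same bookkeeping of the scalars $\tfrac1{\sqrt3}\cdot\tfrac1{\sqrt3}\cdot\tfrac12=\tfrac16$. The only cosmetic difference is that the paper phrases the middle step as ${\rm Fix}(\mathbf{1})=F_3\cdot{\rm Fix}((1,0,0))\cdot F_3^*$ rather than invoking Proposition~\ref{ana} directly, which amounts to the same thing.
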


\begin{prf}
Let $A\in U(3)$. By Theorems~\ref{u3} and \ref{char} we have that $A\in  \mathcal{D} \cdot {\rm Fix}(\mathbf{1}) \cdot \mathcal{D}_1$. Moreover,  in this case ${\rm Fix}(\mathbf{1})=F_3\cdot{\rm Fix}((1,0,0))\cdot F_3^*$, where $F_3$ is the Fourier transform and 
\[
 {\rm Fix}((1,0,0))=\left\{ \frac 12 \begin{bmatrix}
2 & 0&0 \\
0 &a(1+z) & ac(1-z) \\
0&b(1-z) & bc(1+z)
\end{bmatrix} :a,b,c,z\in\bt\right\}
\]
due to the description of $U(2)$ provided in Corollary~\ref{u2f}. Since $F_3=\frac1{\sqrt3} \begin{bmatrix}
1 & 1&1 \\
1 &\ov\om & \om \\
1 &\om & \ov\om
\end{bmatrix}$\break
we can finish the proof.
\end{prf}

Moreover, as a byproduct from the proof of Theorem~\ref{u3} we obtain the following description of $U(3)$.

\begin{corollary}\label{euler}
\begin{align*}
U(3)&=\{\diag(\la_1,\la_2,\la_3) X_\be Y^z_\al X_\ga \diag(1,\la_4,\la_5): \, \al,\be,\ga\in[0,2\pi]; z,\la_j\in\bt, j=1,\dots,5\}\\
&=\{\diag(\la_1,\la_2,\la_3) Z_\be Y^z_\al X_\ga \diag(1,\la_4,\la_5): \, \al,\be,\ga\in[0,2\pi]; z,\la_j\in\bt, j=1,\dots,5\},
\end{align*}
where for a given $\varphi\in[0,2\pi]$
\[
X_\vp=\begin{bmatrix}
1 & 0&0 \\
0 &\cos\vp & -\sin\vp \\
0&\sin\vp & \cos\vp
\end{bmatrix},\quad
Y^z_\vp=\begin{bmatrix}
\cos\vp& 0&-\sin\vp\\
0 &z& 0\\
\sin\vp&0 & \cos\vp
\end{bmatrix},\quad
Z_\vp=\begin{bmatrix}
\cos\vp & -\sin\vp&0 \\
\sin\vp & \cos\vp&0\\
0&0&1
\end{bmatrix}.
\]
\end{corollary}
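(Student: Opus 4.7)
The plan is to derive both decompositions from the parameterization $U(3) = \{\diag(\la_1,\la_2,\la_3)\, T_{(\al,\be,\ga,z)}\, \diag(1,\la_4,\la_5)\}$ furnished by Observation~\ref{obsa3} in Appendix~A.

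For the \textbf{XYX form}, I will verify by direct matrix multiplication the identity
\[
X_{-\be}\, Y^z_{-\al}\, X_{-\ga} \;=\; \diag(1,1,-1)\, T_{(\al,\be,\ga,z)}.
\]
Once this is checked entry by entry, the substitutions $\al\mapsto -\al$, $\be\mapsto -\be$, $\ga\mapsto -\ga$ are harmless since the angles range over the full period $[0,2\pi]$, and the factor $\diag(1,1,-1)$ is absorbed into $\diag(\la_1,\la_2,\la_3)$ because $-1\in\bt$. This converts Observation~\ref{obsa3} directly into the first claimed identity.

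For the \textbf{ZYX form}, a different approach is needed because $X_\be$ and $Z_\be$ act on disjoint coordinate planes and are not related by diagonal conjugation. Given $A \in U(3)$, I will use the five phase parameters $\la_1,\ldots,\la_5 \in \bt$ to cancel the five independent phases appearing among the entries of the first column and third row of $A$ (three phases in the first column, three in the third row, one shared at $A_{31}$), producing $B := \diag(\ov{\la_1},\ov{\la_2},\ov{\la_3})\, A\, \diag(1,\ov{\la_4},\ov{\la_5})$ that is real and non-negative on its first column and third row. Since $Z_\be Y^z_\al X_\ga$ has real first column $(\cos\be\cos\al, \sin\be\cos\al, \sin\al)$ and real third row $(\sin\al, \cos\al\sin\ga, \cos\al\cos\ga)$, matching these to those of $B$ uniquely determines $\al,\be,\ga \in [0,\pi/2]$. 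It then remains to find $z \in \bt$ with $Z_\be Y^z_\al X_\ga = B$.

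The \textbf{main obstacle} is showing that the one-parameter family $\{Z_\be Y^z_\al X_\ga : z \in \bt\}$ surjects onto the space of unitaries in $U(3)$ sharing the prescribed real non-negative first column and third row. This target space is a compact $\bt$-torsor of real dimension one. My plan is to verify surjectivity by a differential argument: the derivative $\partial_z(Z_\be Y^z_\al X_\ga) = Z_\be\,\diag(0,1,0)\,X_\ga$ is non-vanishing for $\al,\be,\ga \in (0,\pi/2)$, so the smooth map from $\bt$ to the target torsor is a local diffeomorphism, hence a covering map between connected compact one-manifolds, hence surjective. Alternatively, one can prove an analog of Observation~\ref{obsa3} with ZYX block structure via an appendix-style computation parallel to the one used for $T_{(\al,\be,\ga,z)}$.
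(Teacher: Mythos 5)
Your treatment of the XYX form is correct and is essentially the paper's argument: the identity $X_{-\be}Y^z_{-\al}X_{-\ga}=\diag(1,1,-1)\,T_{(\al,\be,\ga,z)}$ checks out entry by entry, and it is just a sign-flipped variant of the factorization $T_{(\al,\be,\ga,z)}=X_\be Y^z_\al X_\ga\,\diag(1,1,-1)$ that the paper writes down, with the reflection absorbed into the adjacent diagonal factor. For the ZYX form your route is genuinely different: the paper dispatches it with unspecified ``easy matrix manipulations'' from the same factorization of $T$, whereas you rebuild the decomposition from scratch by using the five phases to make the first column and third row of $A$ real and non-negative and then matching them against the first column $(\cos\be\cos\al,\sin\be\cos\al,\sin\al)$ and third row $(\sin\al,\cos\al\sin\ga,\cos\al\cos\ga)$ of $Z_\be Y^z_\al X_\ga$. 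This buys a self-contained, checkable proof of the second identity (arguably more informative than the paper's one-liner), at the cost of being longer and of not reusing the $T$-parametrization.

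Two soft spots in the ZYX half. First, the assertion that the unitaries with prescribed real first column and third row form a compact connected one-manifold (a $\bt$-torsor) is precisely what your covering-map argument needs as input, and you do not prove it; the local-diffeomorphism computation alone does not rule out the target having several components or larger dimension. The clean fix is direct: replace $U$ by $Z_\be^{-1}UX_\ga^{-1}$, which has first column $(\cos\al,0,\sin\al)$ and third row $(\sin\al,0,\cos\al)$; for $\cos\al\neq 0$ orthonormality of the columns then forces the second column to be $(0,z,0)$ with $z\in\bt$ and the third column to be $(-\sin\al,0,\cos\al)$, i.e.\ the matrix \emph{is} $Y^z_\al$. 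This proves the torsor claim and surjectivity in one stroke, making the covering-map detour unnecessary. Second, the degenerate cases (e.g.\ $\cos\al=0$, where $\be,\ga$ are not determined by the matching and the set of unitaries with first column $(0,0,1)$ and third row $(1,0,0)$ is four-dimensional) are not addressed; they can be handled either by invoking the $U(2)$ decomposition from Section 5 on the remaining $2\times 2$ block, or globally by noting that the right-hand side is compact and contains the dense set of non-degenerate matrices.
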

\begin{prf}
This follows from Observation~\ref{obsa3} used in the proof of Theorem~\ref{u3} and the matrix factorization:
\[
T_{(\al,\be,\ga,z)}=\begin{bmatrix}
1 & 0&0 \\
0 &\cos\be & -\sin\be \\
0&\sin\be & \cos\be
\end{bmatrix}
\begin{bmatrix}
\cos\al & 0&-\sin\al\\
0 &z& 0\\
\sin\al&0 & \cos\al
\end{bmatrix}
\begin{bmatrix}
1 & 0&0 \\
0 &\cos\ga & \sin\ga \\
0&\sin\ga & -\cos\ga
\end{bmatrix},
\]
that leads to the provided description of $U(3)$ via easy matrix manipulations.
\end{prf}

The above description of $U(3)$ can be compared with the standard Euler-Tait-Bryan decomposition of $O(3)$:
\[
O(3)=\{ X_\be Y^z_\al X_\ga: \, \al,\be,\ga\in[0,2\pi]; z=\pm1\}=\{ Z_\be Y^z_\al X_\ga: \, \al,\be,\ga\in[0,2\pi]; z=\pm1\}.
\]

\section {$U(4)$ and beyond}

The study conducted in the two previous sections allows to surmise that, excluding some trivial cases, the number of biunimodular vectors for $A\in U(n)$ is finite when $n\le 3$. The classic example of the Fourier transform $F_4$ indicates a crucial change in the amount of biunimodular vectors, when $n\ge 4$. To understand this phenomenon it is enough to notice that $(1,0,1,0)$ and $(0,1,0,-1)$ are eigenvectors of $F_4$. Thus, $(1,z,1,-z)$ is biunimodular for $F_4$ for all $z\in\bt$. An investigation of the remaining biunimodular vectors for $F_4$, that are given as $(1,z,-1,z)$, allows to see a pattern that guarantees a continuum of biunimodular vectors.

In order to view this pattern, for $u\in\bc^n$ we define its support as $\supp(u)=\{k\in\{1,2,\dots,n\}: u_k\not=0\}$ and consider the following.
\begin{observ}\label{obs} Let $A\in U(n)$. If there are $u,w\in\bc^n$ such that each of the vectors $|u|,|w|,|Au|,|Aw|$ is equal to 1 on its support and
 both disjoint unions $\supp(u)\cup \supp(w)=\supp(Au)\cup \supp(Aw)$ cover $\{1,2,\dots,n\}$, then $u+zw$ is biunimodular for $A$ for all $z\in\bt$.
\end{observ}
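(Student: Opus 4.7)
The plan is to verify directly that the two vectors $u+zw$ and $A(u+zw) = Au + zAw$ are unimodular by examining them coordinate by coordinate. This amounts to a bookkeeping argument using the support hypotheses; unitarity of $A$ plays no role beyond ensuring the very notion of biunimodular vector is meaningful.

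First I would unpack the hypothesis that $\supp(u)\cup\supp(w) = \{1,2,\dots,n\}$ is a \emph{disjoint} union: this forces each index $k$ to lie in exactly one of $\supp(u)$ or $\supp(w)$. Consequently, for any fixed $z\in\bt$, the $k$-th coordinate of $u+zw$ equals $u_k$ when $k\in\supp(u)$ and equals $zw_k$ when $k\in\supp(w)$. Since $|u_k|=1$ on $\supp(u)$, $|w_k|=1$ on $\supp(w)$, and $|z|=1$, every coordinate of $u+zw$ has modulus $1$, so $u+zw\in\bt^n$. Next I would apply the identical reasoning to $Au+zAw$, using the second disjoint union $\supp(Au)\cup\supp(Aw)=\{1,2,\dots,n\}$ together with the assumption that $|Au|$ and $|Aw|$ are $1$ on their respective supports, to conclude $A(u+zw)\in\bt^n$.

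There is no real obstacle here --- the observation is a one-line verification once the definitions are unpacked. The content of the statement lies not in the proof but in the combinatorial side condition: whenever one can exhibit $u$ and $w$ whose supports, together with those of $Au$ and $Aw$, partition $\{1,\dots,n\}$ in the prescribed way, one immediately obtains the continuum $\{u+zw : z\in\bt\}$ of biunimodular vectors. This is precisely the mechanism underlying the $F_4$ example $(1,0,1,0)$ and $(0,1,0,-1)$ discussed just before the observation.
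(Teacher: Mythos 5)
Your verification is correct and is exactly the intended argument: the paper states this as an Observation without proof, and the coordinate-by-coordinate check using the disjointness of $\supp(u)$ and $\supp(w)$ (resp.\ $\supp(Au)$ and $\supp(Aw)$) together with linearity of $A$ is all that is needed. Your side remark that unitarity of $A$ is not used in the verification itself is also accurate.
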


The above observation allows to construct matrices in $U(n)$ with all nonzero entries having a continuum of biunimodular vectors, whenever $n\ge4$. It also allows for a further appreciation of Haagerup's result stated in Theorem~\ref{hag}. It is not clear, however, whether for all unitary matrices with a continuum of biunimodular vectors one can find vectors $u$ and $w$ as in  Observation~\ref{obs}. Even if this would be true, pointing to a conclusion that for any unitary matrix the set of its biunimodular vectors is a (finite) union of tori, it still would not tell, whether the set can be empty or not, underscoring the importance of Theorem~\ref{IW}.

In the next section, we provide an alternative way of building $U(2^n)$, that is based solely on the biunimodular structure of $U(2)$. After that, we conduct the study of biunimodular vectors in the general $U(n)$ case from the Lie groups perspective. Finally, we present a numerical treatment of the biunimodular problem with certain applications to the classical Fourier case.

\section{$U(2^n)$}\label{sec2n}

In this section we show, that a further generalization of the biunimodular problem allows to build all matrices in $U(2n)$ from matrices in $U(n)$.

In order to achieve this goal we employ the formula for $U(2)$ given in Section~\ref{secu2}:
\begin{equation}\label{u2f2}
U(2)=\left\{\frac 12\begin{bmatrix}
a & 0 \\
0 & b
\end{bmatrix}
\begin{bmatrix}
1+z & 1-z \\
1-z & 1+z
\end{bmatrix}
\begin{bmatrix}
1 & 0 \\
0 & c
\end{bmatrix}\,:  a,b,c,z\in\bt \right\}
\end{equation}
and replace the unimodular numbers  $a,b,c,z\in\bt$ by unitary matrices $A,B,C,Z\in U(n)$.
\begin{theorem}\label{bimat} For every $n\in\bn$
\begin{equation}\label{u2n}
U(2n)=\left\{\frac 12\begin{bmatrix}
A & 0 \\
0 &B
\end{bmatrix}
\begin{bmatrix}
I+Z & I-Z \\
I-Z & I+Z
\end{bmatrix}
\begin{bmatrix}
I & 0 \\
0 & C
\end{bmatrix}\,:  A,B,C,Z\in U(n) \right\},
\end{equation}
where $I$ stands for the identity matrix in $U(n)$.
\end{theorem}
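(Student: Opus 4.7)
The inclusion $\supseteq$ is an immediate verification: using $(I\pm Z)(I\pm Z^*)=2I\pm(Z+Z^*)$ and $(I+Z)(I-Z^*)+(I-Z)(I+Z^*)=0$ one sees that $M(Z):=\frac12\begin{bmatrix}I+Z & I-Z\\ I-Z & I+Z\end{bmatrix}$ is unitary for every $Z\in U(n)$; every product $\diag(A,B)\cdot M(Z)\cdot \diag(I,C)$ with $A,B,C\in U(n)$ therefore lies in $U(2n)$.

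For the opposite inclusion, block-decompose $U\in U(2n)$ as $U=\begin{bmatrix}U_{11}&U_{12}\\ U_{21}&U_{22}\end{bmatrix}$ with $U_{ij}\in M_n(\bc)$. Multiplying out the target product and matching blocks shows that the claimed decomposition of $U$ is equivalent to finding $A,B,C,Z\in U(n)$ satisfying
\[
A=U_{11}+U_{12}C^*,\;\; AZ=U_{11}-U_{12}C^*,\;\; B=U_{21}+U_{22}C^*,\;\; BZ=U_{22}C^*-U_{21}.
\]
I plan to reduce the whole system to a single task: choose $C\in U(n)$ so that $A:=U_{11}+U_{12}C^*$ is unitary. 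With $B:=U_{21}+U_{22}C^*$ and $A_{\rm alt}:=U_{11}-U_{12}C^*$, the relations $U^*U=UU^*=I$ together with $CC^*=I$ yield the three identities
\[
A^*A+B^*B=2I,\qquad AA^*+A_{\rm alt}A_{\rm alt}^*=2I,\qquad A^*A_{\rm alt}=B^*(U_{22}C^*-U_{21}),
\]
from which unitarity of $A$ forces unitarity of $B$ and of $A_{\rm alt}$, so that $Z:=A^*A_{\rm alt}$ is unitary, and the third identity then hands us $BZ=U_{22}C^*-U_{21}$ for free, closing the system.

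It remains to produce such a $C$. I would exploit the row identity $U_{11}U_{11}^*+U_{12}U_{12}^*=I$ to obtain a joint decomposition
\[
U_{11}=Q\widetilde\Sigma V_1^*,\qquad U_{12}=Q\Sigma V_2^*,
\]
with $Q,V_1,V_2\in U(n)$ and diagonal $\Sigma,\widetilde\Sigma\geq 0$ satisfying $\Sigma^2+\widetilde\Sigma^2=I$ -- essentially the construction underlying the cosine--sine decomposition of $U(2n)$. Choosing $C:=-iV_1V_2^*$ then gives $A=Q(\widetilde\Sigma+i\Sigma)V_1^*$, and this is unitary because $(\widetilde\Sigma+i\Sigma)(\widetilde\Sigma-i\Sigma)=\widetilde\Sigma^2+\Sigma^2=I$. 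The main obstacle is establishing this joint decomposition carefully when the spectrum of $U_{12}^*U_{12}$ is degenerate or when $\widetilde\Sigma$ has zero entries -- one has to align the right-singular bases of $U_{11}$ and $U_{12}$ on each common spectral subspace of $U_{12}^*U_{12}$ -- and it is precisely here that an \emph{elementary} proof, in the spirit promised by the paper, would do its real work.
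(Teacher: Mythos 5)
Your reduction is correct and is essentially the reduction the paper itself performs: you rewrite the factorization as the four block equations, observe that everything hinges on choosing $C\in U(n)$ so that $A=U_{11}+U_{12}C^*$ is unitary, and then use $U^*U=UU^*=I$ to get unitarity of $B$ and of $A_{\rm alt}=U_{11}-U_{12}C^*$ for free, after which $Z=A^*A_{\rm alt}$ closes the system. (The paper packages the same facts slightly differently, via the group $\fix\begin{bmatrix}\scriptstyle I\\ \scriptstyle I\end{bmatrix}$ and a small lemma saying that in a matrix $\frac1{\sqrt2}\begin{bmatrix}A&*\\ B&*\end{bmatrix}\in U(2n)$ unitarity of $A$ forces unitarity of $B$, but the content is identical.) Your choice $C=-iV_1V_2^*$ is likewise the same choice as the paper's $C^*=iU_Y^*U_X$, written in SVD rather than polar coordinates.

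The one place where your proposal is genuinely unfinished is the step you yourself flag: you assume a joint decomposition $U_{11}=Q\widetilde\Sigma V_1^*$, $U_{12}=Q\Sigma V_2^*$ with a \emph{common} left factor $Q$ and $\Sigma^2+\widetilde\Sigma^2=I$, and you leave its existence open in the degenerate cases. This gap is real but closes easily, and the paper's proof shows the clean way around it: take polar decompositions $U_{11}=S_XU_X$, $U_{12}=S_YU_Y$ with $S_X,S_Y$ positive semi-definite and $U_X,U_Y$ unitary (these always exist), set $C^*=iU_Y^*U_X$, so that $A=(S_X+iS_Y)U_X$. Then $AA^*=S_X^2+S_Y^2+i(S_YS_X-S_XS_Y)$, and $UU^*=I$ gives $S_X^2+S_Y^2=I$; hence $S_X^2$ and $S_Y^2$ commute, hence the positive semi-definite square roots $S_X$ and $S_Y$ commute, and $AA^*=I$. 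No simultaneous diagonalization, no alignment of singular bases on spectral subspaces, and no case distinction for repeated or vanishing singular values is needed. Alternatively, within your own notation, the joint decomposition follows directly: take any SVD $U_{11}=Q\widetilde\Sigma V_1^*$, note $U_{12}U_{12}^*=I-Q\widetilde\Sigma^2Q^*=Q\Sigma^2Q^*$ with $\Sigma=(I-\widetilde\Sigma^2)^{1/2}$, and then the polar decomposition $U_{12}=(U_{12}U_{12}^*)^{1/2}W=Q\Sigma(W^*Q)^*$ supplies $V_2=W^*Q$. Either repair completes your argument.
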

\noindent

The above theorem yields immediately a map from $\bt^{4^n}$ onto $U(2^n)$ via an obvious inductive procedure starting from the trivial map from $\bt$ onto $U(1)$.  In the first step of this procedure we recover (\ref{u2f2}). In the next step we get a formula for $U(4)$ that is enclosed in Appendix~A.

Before we prove the theorem, we need a slim primary on block matrices. Let $M(n)$ denote the set of all $n\times n$ matrices with complex entries. An arbitrary matrix in $M(2n)$ can be written in a block form as $\begin{bmatrix}
A & B \\
C & D
\end{bmatrix}$
with $A,B,C,D\in M(n)$.

 Clearly, we have that  $\begin{bmatrix}A & B \\C & D\end{bmatrix}\begin{bmatrix}A' & B' \\C' & D'\end{bmatrix}=\begin{bmatrix}AA'+BC' & AB'+BD' \\CA'+DC' & CB'+DD'\end{bmatrix}$ and $\begin{bmatrix}A & B \\C & D\end{bmatrix}^*=\begin{bmatrix}A^* & C^* \\B^* & D^*\end{bmatrix}$.
 Moreover, every $2n\times n$ matrix with complex entries can be written as $\begin{bmatrix}X \\Y\end{bmatrix}$ with $X,Y\in M(n)$ and 
$\begin{bmatrix}A & B \\C & D\end{bmatrix}\begin{bmatrix}X \\Y\end{bmatrix}=\begin{bmatrix}AX+BY \\CX+DY\end{bmatrix}$.
\begin{lemma}\label{bimal}
Let $\frac1{\sqrt 2} \begin{bmatrix}A & B \\C & D\end{bmatrix}\in U(2n)$ with $A\in U(n)$ and $B,C,D\in M(n)$. Then  $B,C,D\in U(n)$.
\end{lemma}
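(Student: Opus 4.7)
The plan is a direct computation with the block unitarity conditions. Set $U=\frac{1}{\sqrt{2}}\begin{bmatrix}A & B\\ C & D\end{bmatrix}$, and expand the two identities $UU^{*}=I_{2n}$ and $U^{*}U=I_{2n}$ using the block multiplication rules recalled just before the lemma. From $UU^{*}=I_{2n}$ I would read off
\[
AA^{*}+BB^{*}=2I_n,\qquad CC^{*}+DD^{*}=2I_n,\qquad AC^{*}+BD^{*}=0,
\]
and from $U^{*}U=I_{2n}$
\[
A^{*}A+C^{*}C=2I_n,\qquad B^{*}B+D^{*}D=2I_n,\qquad A^{*}B+C^{*}D=0.
\]

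Now I would feed in the hypothesis that $A\in U(n)$, i.e.\ $AA^{*}=A^{*}A=I_n$. The first diagonal identity in each block gives $BB^{*}=I_n$ and $C^{*}C=I_n$ directly; since $B,C$ are square matrices this is enough to conclude $B,C\in U(n)$ (a square matrix with a one-sided inverse is invertible). Once $B\in U(n)$ is in hand, the second diagonal identity $B^{*}B+D^{*}D=2I_n$ forces $D^{*}D=I_n$, so $D\in U(n)$ as well.

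The off-diagonal identities $AC^{*}+BD^{*}=0$ and $A^{*}B+C^{*}D=0$ are not needed for the lemma itself; they will however be the useful consequence elsewhere, since once $A,B,C,D$ are all unitary these rearrange into $D=-B^{*}AC^{*}$ (or an equivalent form), which is exactly the kind of relation that will power Theorem~\ref{bimat}. I do not expect any genuine obstacle here: the argument is a one-line matrix computation plus the elementary fact that a square matrix $M$ with $M^{*}M=I$ (or $MM^{*}=I$) is unitary.
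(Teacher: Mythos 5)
Your proof is correct and follows essentially the same route as the paper: read off the block identities from $UU^{*}=I_{2n}$ and $U^{*}U=I_{2n}$, use $A\in U(n)$ to get $BB^{*}=I$ and $C^{*}C=I$, and then $D^{*}D=I$ from the second diagonal block once $B$ is known to be unitary. The only difference is that you spell out the one-sided-inverse remark explicitly, which the paper leaves implicit.
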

\begin{proof}
Let $U=\frac1{\sqrt 2} \begin{bmatrix}A & B \\C & D\end{bmatrix}$ and $I\in U(n)$ be the identity matrix. Since $UU^*= \begin{bmatrix}I & 0 \\0 & I\end{bmatrix}$ we get that $BB^*=I$. Considering $U^*U= \begin{bmatrix}I & 0 \\0 & I\end{bmatrix}$ yields that $C^*C=I$ and $D^*D=I$.
\end{proof}
\begin{proof}[Proof of Theorem \ref{bimat}]
We can recognize the task as a special sort of a biunimodular problem. For an arbitrary unitary matrix $U\in U(2n)$ we need to find a unitary matrix $C\in U(n)$ such that $U\begin{bmatrix}I \\C^*\end{bmatrix}$ is ``unimodular'', in the sense that  
\begin{equation}\label{bima}
U\begin{bmatrix}I \\C^*\end{bmatrix}= \begin{bmatrix}A \\B\end{bmatrix} \text{ with } A,B,C\in U(n).  
\end{equation}
(The general biunimodular problem of finding $A,B,C,D\in U(n)$ such that  $U\begin{bmatrix}D^* \\C^*\end{bmatrix}= \begin{bmatrix}A \\B\end{bmatrix}$ collapses to the latter since  $U\begin{bmatrix}D^* \\C^*\end{bmatrix}= \begin{bmatrix}A \\B\end{bmatrix}$ iff  $U\begin{bmatrix}I \\C^*D\end{bmatrix}= \begin{bmatrix}AD \\BD\end{bmatrix}$.) Therefore, the goal is to repeat the arguments used in Section~\ref{secu2}, carefully
replacing the unimodular numbers in $\bt$ by unitary matrices in $U(n)$.

We start by noting that solving (\ref{bima}) amounts to saying that $ \begin{bmatrix}A^* & 0 \\0 & B^*\end{bmatrix}U \begin{bmatrix}I & 0 \\0 & C^*\end{bmatrix} \begin{bmatrix}I \\I\end{bmatrix}= \begin{bmatrix}I \\I\end{bmatrix}$, i.e. that $V= \begin{bmatrix}A^* & 0 \\0 & B^*\end{bmatrix}U \begin{bmatrix}I & 0 \\0 & C^*\end{bmatrix}$ belongs to $\fix \begin{bmatrix}\scriptstyle I \\\scriptstyle I\end{bmatrix}=\left\{V\in U(2n)\,:V \begin{bmatrix}I \\I\end{bmatrix}= \begin{bmatrix}I \\I\end{bmatrix}\right\}$. The description of the group $\fix \begin{bmatrix}\scriptstyle I \\\scriptstyle I\end{bmatrix}$ is again related to  $\fix \begin{bmatrix}\scriptstyle I \\\scriptstyle 0\end{bmatrix}=\left\{ \begin{bmatrix}I&0 \\0&Z\end{bmatrix}\,:Z\in U(n)\right\}$ since $\FF=\frac1{\sqrt 2} \begin{bmatrix}I & I \\I & -I\end{bmatrix}\in U(2n)$ and $\FF\begin{bmatrix}I \\0\end{bmatrix}=\frac1{\sqrt 2}\begin{bmatrix}I \\I\end{bmatrix}$. Thus, $V\in \fix \begin{bmatrix}\scriptstyle I \\\scriptstyle I\end{bmatrix}$ 
iff $\FF V\FF \in \fix \begin{bmatrix}\scriptstyle I \\\scriptstyle 0\end{bmatrix}$, being in turn equivalent to $V=\FF  \begin{bmatrix}I&0 \\0&Z\end{bmatrix} \FF=\frac12 \begin{bmatrix}
I+Z & I-Z \\
I-Z & I+Z
\end{bmatrix}$ with some $Z\in U(n)$.

In this way we get that 
$$
U=\frac 12\begin{bmatrix}
A & 0 \\
0 &B
\end{bmatrix}
\begin{bmatrix}
I+Z & I-Z \\
I-Z & I+Z
\end{bmatrix}
\begin{bmatrix}
I & 0 \\
0 & C
\end{bmatrix},
$$
for some $A,B,C\in U(n)$ iff  (\ref{bima}) holds.

In order to solve the biunimodular problem  (\ref{bima}) we write $U= \begin{bmatrix}X & Y \\X' & Y'\end{bmatrix}$ with $X,Y,X',Y'\in M(n)$ and apply the singular value decomposition (SVD) to $X$ and $Y$.

Recall that for an arbitrary matrix $M\in M(n)$ there are unitary matrices $W,W'\in U(n)$ and a diagonal matrix with non-negative entries $\Sigma\in M(n)$ such that $M=W'\Sigma W^*$. This leads to the polar decomposition $M=S_MU_M$ with $S_M=W'\Sigma (W')^*$ - a positive semi-definite matrix and a unitary matrix $U_M=W'W^*\in U(n)$. 

In short, we use polar decomposition $X=S_XU_X$, $Y=S_YU_Y$ to write
$$
U= \begin{bmatrix}S_XU_X & S_Y U_Y\\X' & Y'\end{bmatrix}
$$
and claim that 
$$
U \begin{bmatrix}I \\ iU_Y^*U_X\end{bmatrix}= \begin{bmatrix}A \\B\end{bmatrix}
$$
with $A,B\in U(n)$ as desired, so $C^*= iU_Y^*U_X$ solves problem  (\ref{bima}). To see this, we check that $A=(S_X+iS_Y)U_X$ and $AA^*=S_X^2+S_Y^2+i(S_YS_X-S_XS_Y)$. Since $UU^*= \begin{bmatrix}I & 0 \\0 & I\end{bmatrix}$ we get that $S_X^2+S_Y^2=I$, so $S_X^2$ and $S_Y^2$ commute. For positive semi-definite matrices $S_X$ and $S_Y$ this means that they commute as well and, therefore, $A\in U(n)$.

To finish the proof we make the final observation that $A\in U(n)$ already implies that $B\in U(n)$. Indeed, we can extend the matrix $\frac1{\sqrt 2} \begin{bmatrix}I  \\ C^*\end{bmatrix}$ to a unitary matrix $\frac1{\sqrt 2} \begin{bmatrix}I & C \\C^* & -I\end{bmatrix}\in U(2n)$ and see that
$ \frac1{\sqrt 2}U \begin{bmatrix}I & C \\C^* & -I\end{bmatrix}=\frac1{\sqrt 2} \begin{bmatrix}A & X'' \\B & Y''\end{bmatrix}\in U(2n)$. Since $A$ is unitary, Lemma~\ref{bimal} assures that $B\in U(n)$.

Therefore, we have proved that every $U\in U(2n)$ can be written in the way prescribed by (\ref{u2n}).
On the other hand, we have already checked that $\FF  \begin{bmatrix}I&0 \\0&Z\end{bmatrix} \FF=\frac12 \begin{bmatrix}
I+Z & I-Z \\
I-Z & I+Z
\end{bmatrix}\in U(2n)$ for an arbitrary $Z\in U(n)$, so formula (\ref{u2n}) holds.
\end{proof}

\section{$U(n)$}
\label {sect:factor}

We are interested in finding the biunimodular vectors associated to a matrix $A \in U(n)$ for any $n \in \mathbb{N}$. As it was pointed out in Remark~\ref{r1} we can concentrate on biunimodular vectors with the leading entry equal to 1. Let us denote the collection of such vectors  as $\bi(A)$, so
\[
\bi(A)=\{(v_1,v_2,\dots,v_n)\in \bt^n\cap A^{*}(\bt^n):v_1=1\}.
\]

Moreover, let us make the following formal definition:
\[
\mathcal{B}_n = \{ A \in U(n) : {\rm Bi}(A) \not= \emptyset \}.
\]
Clearly, $\mathcal{B}_n$ is the set of  $n\times n$ unitary matrices that possess biunimodular vectors and Theorem~\ref{IW} asserts that $\mathcal{B}_n=U(n)$. While it would be hard to reproduce this result, one can check what information about $\mathcal{B}_n$ and  $\bi(A)$ can be drawn by using basic tools of Lie groups theory.

\subsection{The main correspondence}

The main tool for understanding $\mathcal{B}_n$ and $\bi(A)$ will be the factorization conducted in Proposition~\ref{ana}. As we noticed in Theorem~\ref{char}(a) the proposition implies that $A\in\mathcal{B}_n$ iff $A\in \mathcal{D} \cdot {\rm Fix}(\mathbf{1}) \cdot \mathcal{D}_1$, where
 $\mathcal{D} \subseteq U(n)$ denotes the subgroup of diagonal matrices with diagonal entries in $\mathbb{T}$,  $\mathcal{D}_1 \subseteq \mathcal{D}$ is the subgroup of matrices with the first diagonal entry equal to $1$ and ${\rm Fix}(\mathbf{1}) = \{ S \in U(n): S\mathbf{1} = \mathbf{1} \}$ with $\mathbf{1} = \left(1,1,\dots,1\right) \in \mathbb{C}^n$ is also a subgroup of $U(n)$. This allows to consider the following map 
\[\Phi: \mathcal{D} \times {\rm Fix}(\mathbf{1}) \times \mathcal{D}_1 \to U(n)\]
 \[\Phi(D_1,S,D_2) = D_1 S D_2\] containing the entire information concerning the existence of biunimodular vectors. 

Indeed,  the matrices possessing biunimodular vectors are given as the image of $\Phi$, what already provides some information on $\mathcal{B}_n$.

\begin{proposition}\label{prop isi}
 $\mathcal{B}_n = \mathcal{D} \cdot {\rm Fix}(\mathbf{1}) \cdot \mathcal{D}_1=\Phi( \mathcal{D} \times {\rm Fix}(\mathbf{1}) \times \mathcal{D}_1 )$. In particular, $\mathcal{B}_n$ is a closed, pathwise connected subset of $U(n)$. 
\end{proposition}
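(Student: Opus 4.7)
The plan is to observe that the set equality is essentially a restatement of what has already been established, and then to extract the topological conclusions from the structure of $\Phi$ and its domain.

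First, I would note that $\mathcal{B}_n = \Phi(\mathcal{D} \times {\rm Fix}(\mathbf{1}) \times \mathcal{D}_1)$ is immediate from the definition of $\Phi$ together with Theorem~\ref{char}(a), which states that $A \in \mathcal{B}_n$ iff $A \in \mathcal{D} \cdot {\rm Fix}(\mathbf{1}) \cdot \mathcal{D}_1$. So the set equality requires no real work; it is simply a packaging of the earlier characterization into the language of the map $\Phi$.

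For the topological statements, the key observation is that $\Phi$ is continuous (matrix multiplication is continuous) and that each of the three factors $\mathcal{D}$, ${\rm Fix}(\mathbf{1})$, $\mathcal{D}_1$ is a closed subgroup of the compact group $U(n)$. Indeed, $\mathcal{D}$ is defined by the closed conditions that off-diagonal entries vanish together with unitarity; $\mathcal{D}_1$ adds the closed condition on the first diagonal entry; and ${\rm Fix}(\mathbf{1})$ is the preimage of $\{\mathbf{1}\}$ under the continuous evaluation map $S \mapsto S\mathbf{1}$ intersected with $U(n)$. Hence the domain $\mathcal{D} \times {\rm Fix}(\mathbf{1}) \times \mathcal{D}_1$ is compact, its continuous image $\mathcal{B}_n$ is compact, and therefore closed in $U(n)$.

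For pathwise connectedness, it suffices to show that each of the three factors is pathwise connected, since then the product is pathwise connected and $\Phi$ carries this property to the image. Now $\mathcal{D} \cong \mathbb{T}^n$ and $\mathcal{D}_1 \cong \mathbb{T}^{n-1}$ are visibly pathwise connected. For ${\rm Fix}(\mathbf{1})$, I would argue that it is naturally identified with the unitary group of the orthogonal complement of the span of $\mathbf{1}$, hence isomorphic to $U(n-1)$, which is pathwise connected; alternatively, one can conjugate by $F_n$ (using $F_n \mathbf{e} = \frac{1}{\sqrt{n}} \mathbf{1}$, as in the proof of Proposition~\ref{ana}) to see that ${\rm Fix}(\mathbf{1})$ is conjugate to ${\rm Fix}(\mathbf{e})$, which consists of block-diagonal matrices $\mathrm{diag}(1,B)$ with $B \in U(n-1)$.

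There is no real obstacle here; the proposition is essentially a structural corollary of Theorem~\ref{char}(a) combined with standard compactness and connectedness arguments for compact Lie subgroups of $U(n)$. The only mildly subtle point is identifying ${\rm Fix}(\mathbf{1})$ with $U(n-1)$, and even this is routine once one passes to the $F_n$-conjugate picture used throughout the paper.
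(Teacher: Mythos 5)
Your proposal is correct and follows essentially the same route as the paper: the set equality is read off from Theorem~\ref{char}(a) and the definition of $\Phi$, and the topological claims follow because $\mathcal{B}_n$ is the continuous image of a compact, pathwise connected product of subgroups. Your extra justification that ${\rm Fix}(\mathbf{1})$ is conjugate to ${\rm Fix}(\mathbf{e}) \cong U(n-1)$ is a detail the paper leaves implicit but uses elsewhere, so nothing is out of line.
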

\begin{prf}
 As we mentioned before, the equality  $\mathcal{B}_n = \mathcal{D} \cdot {\rm Fix}(\mathbf{1}) \cdot \mathcal{D}_1$ follows immediately from Theorem~\ref{char}(a). Thus, by definition,  $\mathcal{B}_n =\Phi( \mathcal{D} \times {\rm Fix}(\mathbf{1}) \times \mathcal{D}_1 )$. Since the subgroups $\mathcal{D},\, {\rm Fix}(\mathbf{1}), \,\mathcal{D}_1$ are compact and pathwise connected, $\mathcal{B}_n$, which is the continuous image of their cartesian product, has the same properties. 
\end{prf}

Moreover,  the preimage $\Phi^{-1}(A)$ bijectively corresponds to $\bi(A)$.
\begin{proposition}\label{prop isi2}
There is a continuous bijection between $\bi(A)$ and  $\Phi^{-1}(A)$.
\end{proposition}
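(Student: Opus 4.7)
The claim is essentially a bookkeeping refinement of Proposition~\ref{ana} (and the setup in Theorem~\ref{char}(a)): the proof of that proposition already identified biunimodular vectors with factorizations $A = D_w S D_v^*$ whose first and third factors are diagonal torus matrices and whose middle factor fixes $\mathbf{1}$. My plan is therefore to exhibit explicit inverse continuous maps between $\bi(A)$ and $\Phi^{-1}(A)$ and to verify well-definedness.

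First I would define
\[
\rho: \Phi^{-1}(A) \to \bt^n, \qquad \rho(D_1, S, D_2) = D_2^* \mathbf{1}.
\]
Given $(D_1, S, D_2)$ with $D_1 S D_2 = A$, set $v = D_2^* \mathbf{1}$. Then $v \in \bt^n$ (since $D_2 \in \mathcal{D}$), and $v_1 = \overline{(D_2)_{11}} = 1$ because $D_2 \in \mathcal{D}_1$. Moreover $A v = D_1 S D_2 D_2^* \mathbf{1} = D_1 S \mathbf{1} = D_1 \mathbf{1} \in \bt^n$ since $S \in \fix(\mathbf{1})$. Hence $\rho$ lands in $\bi(A)$ and is manifestly continuous.

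In the other direction I would define
\[
\psi: \bi(A) \to \mathcal{D} \times \fix(\mathbf{1}) \times \mathcal{D}_1, \qquad \psi(v) = \bigl( D_{Av},\; D_{Av}^* A D_v,\; D_{\overline v} \bigr).
\]
Well-definedness is the main (but still short) check: $v \in \bt^n$ and $Av \in \bt^n$ imply $D_v, D_{Av} \in \mathcal{D}$; the first diagonal entry of $D_{\overline v}$ is $\overline{v_1}=1$, so $D_{\overline v} \in \mathcal{D}_1$; and the identity $A D_v \mathbf{1} = A v = D_{Av} \mathbf{1}$ gives $(D_{Av}^* A D_v)\mathbf{1} = \mathbf{1}$, so the middle component lies in $\fix(\mathbf{1})$. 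The product of the three factors equals $D_{Av} D_{Av}^* A D_v D_{\overline v} = A$ since $D_v D_{\overline v} = I$, so $\psi(v) \in \Phi^{-1}(A)$. Continuity is clear because $v \mapsto D_v$ and $v \mapsto D_{Av}$ are continuous.

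Finally I would verify that $\rho$ and $\psi$ are mutually inverse: for $v \in \bi(A)$ we have $\rho(\psi(v)) = D_{\overline v}^* \mathbf{1} = D_v \mathbf{1} = v$; conversely, for $(D_1, S, D_2) \in \Phi^{-1}(A)$, setting $v = D_2^* \mathbf{1}$ gives $D_v = D_2^*$ and $D_{Av} = D_1$ (using $Av = D_1 \mathbf{1}$), hence $D_{Av}^* A D_v = D_1^* A D_2^* = S$ and $\psi(\rho(D_1, S, D_2)) = (D_1, S, D_2)$. There is no real obstacle here — the content sits entirely in Proposition~\ref{ana}; the only thing to watch carefully is the normalization $v_1 = 1$, which is precisely what lets $D_{\overline v}$ land in $\mathcal{D}_1$ and matches the extra constraint cut out of $\mathcal{D}$ in the domain of $\Phi$.
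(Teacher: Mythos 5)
Your proof is correct and follows essentially the same route as the paper: the paper's bijection is exactly your $\rho$, namely the projection $(D_1,S,D_2)\mapsto D_2$ followed by conjugating the diagonal, with well-definedness drawn from Proposition~\ref{ana}. You merely make the inverse map $\psi$ and its verification explicit (which in fact shows the bijection is a homeomorphism, slightly more than the statement asks).
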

\begin{prf} With the notation that $D_u$ is a diagonal matrix with entries on the diagonal given by $u\in\bc^n$, the bijection can be described as follows. The preimage $\Phi^{-1}(A)$ consists of all triplets $(D_1,S,D_2)\in  \mathcal{D} \times {\rm Fix}(\mathbf{1}) \times \mathcal{D}_1 $ such that $A=D_1SD_2$. In light of Proposition~\ref{ana} this means that $D_2=D_{\ov v}$ with $v\in\bi(A)$, $D_1=D_{Av}$, and $S=D_{{Av}}^{-1}AD_{ \ov{v}}^{-1}=D_{\ov{Av}}AD_{ v}$. Therefore, the projection $(D_1,S,D_2)\to D_2$ and the map $D_u\to \ov u$ yield the continuous bijection from   
 $\Phi^{-1}(A)$ onto $\bi(A)$, that is given by  $(D_1,S,D_u)\to \ov u$.
\end{prf}
\begin{remark}
  $U(n)$ has (real) dimension $n^2$, whereas ${\rm Fix}(\mathbf{1}) \cong U(n-1)$ is of dimension $(n-1)^2$. $\mathcal{D}$ has dimension $n$ and $\mathcal{D}_1$ has dimension $n-1$, hence $\Phi$ is a mapping between two manifolds of (real) dimension $n^2$. Clearly, $\Phi$ is smooth.
\end{remark}

\subsection{The structure of $\mathcal{B}_n$}

In this subsection we show, that  for every $n\in\bn$ the set $\mathcal{B}_n$ contains a nonempty open subset of $U(n)$.
This is done by calculating the Jacobian of the map $\Phi: \mathcal{D} \times {\rm Fix}(\mathbf{1}) \times \mathcal{D}_1 \to U(n)$ and  proving that at a certain point it has a full rank. This allows to apply the inverse function theorem to draw the above conclusion.

In order to proceed with the calculations we provide some basic facts from the theory of Lie groups and establish the necessary notation.

We consider $U(n)$ and its subgroups as (real) submanifolds of the space $\mathfrak{gl}(n,\mathbb{C})$ of arbitrary matrices. Given a closed subgroup $H \subseteq GL(n,\mathbb{C})$ and $g \in H$, the tangent space $T_g H$ is the space of all tangent vectors of curves in $H$ going through $g$, conveniently viewed as elements of the ambient space $\mathfrak{gl}(n,\mathbb{C})$ containing $H$. The tangent space $T_e H$ is a Lie-subalgebra of $\mathfrak{gl}(n,\mathbb{C})$, i.e. it is closed under the Lie bracket
$[X,Y] = XY-YX$, and it is referred to as the Lie algebra $\mathfrak{h}$ of $H$. Here, $e$ is the neutral element, that is, the identity matrix that we shall denote by $I$.  An alternative description of $\mathfrak{h}$ is obtained via the matrix exponential, defined by
\[
 \exp(X) = \sum_{k =0}^\infty \frac{X^k}{k!}~.
\] It is easy to verify that $\frac{d}{dt}\left(t \mapsto \exp(tX) \right)|_{t=0} = X$, and $\mathfrak{h}$ consists of all matrices $X$ such that $\exp(tX) \in H$, for all $t \in \mathbb{R}$.

 Since multiplication (on the left or on the right) with a group element $g$  is a diffeomorphism on $H$, that is a matrix group, one obtains that
$$T_g H = g \cdot\mathfrak{h} = \{ g X : X \in\mathfrak{h} \}=\mathfrak{h}\cdot g=\{  Xg : X \in\mathfrak{h} \}.$$

We let $\mathfrak{u},\mathfrak{d},\mathfrak{d}_1, \mathfrak{f}$ denote the Lie algebras of $U(n), \mathcal{D},\mathcal{D}_1,{\rm Fix}(\mathbf{1})$, respectively. Then
\[
 \mathfrak{u} = \{ X \in\mathfrak{gl}(n,\mathbb{C}) ~:~X^* = - X \}~,~
\] $\mathfrak{d}$ consists of all diagonal matrices with purely imaginary entries, and $\mathfrak{d}_1 \subset \mathfrak{d}$ is the subspace of matrices with vanishing upper left corner. Differentiating the equality $\exp(tX) \mathbf{1} = \mathbf{1}$ at $t=0$ yields the following description of $\mathfrak{f}$: 
\[
 \mathfrak{f} = \{ X \in \mathfrak{u} : X \mathbf{1} = 0 \}~,
\] i.e. the elements of $\mathfrak{f}$ are characterized by the fact that the sum over rows are zero. We also note the simple fact that $\mathfrak{d} \cap \mathfrak{f} = \{ 0 \}$.

Finally, recall the definition of Jacobians.  If $F: X \to Y$ is a smooth map between manifolds $X$ and $Y$, the Jacobian of $F$ at $x \in X$ is the linear map $dF(x): T_x X \to T_{F(x)} Y$ defined as follows: For any $v \in T_x X$ pick a curve $\gamma : (-\epsilon,\epsilon) \to X$ with $\gamma(0) = x$, $\dot{\gamma}(0) = v$. Define the curve $\tilde{\gamma} = F \circ \gamma: (-\epsilon,\epsilon) \to Y$ (by definition $\tilde{\gamma}(0) = F(x)$). Then $dF(x)(v) = \dot{\tilde{\gamma}}(0)$.

To calculate the Jacobian  of  $\Phi: \mathcal{D} \times {\rm Fix}(\mathbf{1}) \times \mathcal{D}_1 \to U(n)$ we shall use the following
\begin{lemma}  \label{lem:Jac_mult}
 Let $H_1,\,H_2$ be closed subgroups of $U(n)$ and $m:  H_1 \times H_2 \to U(n)$, $m(g,h) = g\cdot h$. Then, for all $g, h \in U(n)$, the Jacobian of $m$ at $(g,h)$ is given by 
\[
 dm(g,h) (X,Y) = g Y + Xh~, ~\forall X \in T_g H_1, Y \in T_h H_2~.
\]
\end{lemma}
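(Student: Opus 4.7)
The proof is a direct application of the curve-based definition of the Jacobian already recalled just before the lemma, combined with the product rule for matrix-valued curves. The plan is to represent the tangent vector $(X,Y) \in T_{(g,h)}(H_1 \times H_2)$ by a pair of curves, pass through $m$, and differentiate.

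First I would identify $T_{(g,h)}(H_1 \times H_2)$ with $T_g H_1 \times T_h H_2$ in the canonical way, so that an arbitrary tangent vector at $(g,h)$ has the form $(X,Y)$ with $X \in T_g H_1$ and $Y \in T_h H_2$. Then I would choose smooth curves $\gamma_1 : (-\epsilon,\epsilon) \to H_1$ and $\gamma_2 : (-\epsilon,\epsilon) \to H_2$ with $\gamma_1(0)=g$, $\dot{\gamma_1}(0)=X$ and $\gamma_2(0)=h$, $\dot{\gamma_2}(0)=Y$. The curve $\gamma(t) = (\gamma_1(t),\gamma_2(t))$ in $H_1 \times H_2$ represents the tangent vector $(X,Y)$.

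Next I would form $\tilde{\gamma}(t) = m(\gamma(t)) = \gamma_1(t)\gamma_2(t)$, viewed as a curve in the ambient vector space $\mathfrak{gl}(n,\bc)$ where matrix multiplication is a bilinear map. The Leibniz rule for bilinear maps then gives
\[
dm(g,h)(X,Y) \;=\; \dot{\tilde{\gamma}}(0) \;=\; \dot{\gamma_1}(0)\,\gamma_2(0) + \gamma_1(0)\,\dot{\gamma_2}(0) \;=\; Xh + gY,
\]
which is exactly the asserted formula.

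There is essentially no obstacle here; the only thing to be careful about is that the product rule is applied inside the ambient associative algebra $\mathfrak{gl}(n,\bc)$ (so that $X h$ and $g Y$ literally mean matrix products), and that the identification $T_{(g,h)}(H_1 \times H_2) \cong T_g H_1 \oplus T_h H_2$ makes the splitting $(X,Y) \mapsto Xh + gY$ well defined and linear. Linearity in $(X,Y)$ is then immediate from the formula, confirming that the expression we obtained is the Jacobian rather than merely its value along a single curve.
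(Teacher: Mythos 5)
Your proof is correct and follows essentially the same route as the paper: pick curves $\gamma_1,\gamma_2$ representing $X$ and $Y$, push them through $m$, and apply the product rule for matrix-valued curves in the ambient algebra $\mathfrak{gl}(n,\mathbb{C})$. The extra remarks on identifying $T_{(g,h)}(H_1\times H_2)$ with $T_gH_1\times T_hH_2$ and on linearity are fine but not substantively different from what the paper does.
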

\begin{prf}
 Let $\gamma_i: (-\epsilon,\epsilon) \to H_i$ denote smooth curves with 
\[ \gamma_1(0) = g, \dot{\gamma}_1(0) =X \mbox{ and } \gamma_2(0) = h, \dot{\gamma}_2(0) = Y ~.\]By a standard reasoning, the product rule for scalar-valued functions easily extends to matrix-valued functions implying that 
\[
 \frac{d(\gamma_1(t) \gamma_2(t))}{dt}(0) = \dot{\gamma}_1(0) \gamma_2(0) + \gamma_1(0) \dot{\gamma}_2(0) = 
gY + Xh~.
\]
\end{prf}

Using this lemma and the chain rule, we obtain a rather transparent description of the Jacobian $d\Phi(D_1,S,D_2)$:
\begin{lemma} \label{lem:Jac_Phi}
Let $(D_1,S,D_2) \in \mathcal{D} \times {\rm Fix}(\mathbf{1}) \times \mathcal{D}_1$. Then 
\[
 d\Phi(D_1,S,D_2) : T_{D_1} \mathcal{D} \times T_{S} ({\rm Fix}(\mathbf{1})) \times T_{D_2}  \mathcal{D}_1 \to T_{D_1 S D_2} U(n)  
\] acts via
\[
 d \Phi(D_1,S,D_2) (X,Y,Z)= D_1 (SZ+YD_2) +XSD_2
\]
\end{lemma}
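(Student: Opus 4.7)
The plan is to express $\Phi$ as an iterated binary multiplication and then apply Lemma~\ref{lem:Jac_mult} together with the chain rule for Jacobians of smooth maps between manifolds. Concretely, I would introduce $m_1 : \mathcal{D} \times {\rm Fix}(\mathbf{1}) \to U(n)$ defined by $m_1(D_1,S) = D_1 S$, and $m_2 : U(n) \times \mathcal{D}_1 \to U(n)$ defined by $m_2(g,D_2) = g \cdot D_2$, so that
\[
\Phi(D_1,S,D_2) = m_2\bigl(m_1(D_1,S),\, D_2\bigr).
\]
Both $m_1$ and $m_2$ are instances of the multiplication map covered by Lemma~\ref{lem:Jac_mult}, since $\mathcal{D}$, ${\rm Fix}(\mathbf{1})$, $\mathcal{D}_1$, and $U(n)$ are all closed subgroups of $U(n)$ (trivially in the case of $U(n)$ itself).

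Applying Lemma~\ref{lem:Jac_mult} to $m_1$ gives $dm_1(D_1,S)(X,Y) = D_1 Y + X S$, and applying it to $m_2$ at the point $(D_1 S, D_2)$ gives $dm_2(D_1 S, D_2)(W, Z) = (D_1 S) Z + W D_2$ for any $W \in T_{D_1 S} U(n)$ and $Z \in T_{D_2}\mathcal{D}_1$. The chain rule then yields
\[
 d\Phi(D_1,S,D_2)(X,Y,Z) = dm_2(D_1 S, D_2)\bigl(dm_1(D_1,S)(X,Y),\, Z\bigr) = D_1 S Z + (D_1 Y + X S) D_2,
\]
which, after grouping the terms containing $D_1$ on the left, is exactly $D_1(SZ + Y D_2) + X S D_2$ as claimed.

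As an alternative I could bypass the chain rule and argue directly with curves: choose smooth curves $\gamma_1, \gamma_2, \gamma_3$ in $\mathcal{D}, {\rm Fix}(\mathbf{1}), \mathcal{D}_1$ respectively with $\gamma_1(0)=D_1,\gamma_2(0)=S,\gamma_3(0)=D_2$ and velocities $X,Y,Z$. Then $\Phi$ composed with the product curve equals $\gamma_1(t)\gamma_2(t)\gamma_3(t)$, and the product rule extended to three matrix-valued factors (obtained by one application of the two-factor rule from the proof of Lemma~\ref{lem:Jac_mult}) gives the same expression upon evaluation at $t=0$.

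There is really no serious obstacle; the only bookkeeping one needs is the identification $T_g H = g \cdot \mathfrak{h} = \mathfrak{h} \cdot g$ for a matrix subgroup, which has been recorded just before the statement, together with the observation that multiplication in the ambient associative algebra $\mathfrak{gl}(n,\mathbb{C})$ is bilinear so the formal product rule transfers directly. Thus the proof is essentially a one-line application of Lemma~\ref{lem:Jac_mult} twice.
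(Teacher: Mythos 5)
Your proof is correct and follows essentially the same route as the paper: factor $\Phi$ into two binary multiplications, apply Lemma~\ref{lem:Jac_mult} twice, and use the chain rule. The only (immaterial) difference is the associativity grouping — you compute $d\Phi$ via $(D_1S)D_2$ while the paper uses $D_1(SD_2)$ — and both yield the identical formula after regrouping.
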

\begin{prf}
 Since we can factor
\[
 \Phi: (D_1,S,D_2) \mapsto (D_1, m(S,D_2)) \mapsto m(D_1,m(S,D_2))~,
\]
applying the previous lemma twice and using the chain rule yields
\begin{eqnarray*}
 d\Phi(D_1,S,D_2) (X,Y,Z) & = &  \left[ dm(D_1,S D_2) \circ (I, dm(S,D_2)) \right] (X,Y,Z) \\
 & =&  dm(D_1,S D_2)(X,SZ+YD_2) = 
D_1 (SZ+YD_2) +XSD_2~. 
\end{eqnarray*}
\end{prf}

Recall that we are particularly interested in the rank of $d\Phi$, that is, the real dimension of the image of $d\Phi$. The following lemma translates this to a more tractable problem in linear algebra: 
\begin{lemma} \label{lem:regular_rank}
Let $S \in {\rm Fix}(\mathbf{1})$ be given, and $m \in \mathbb{N}$. Then 
the following are equivalent:
\begin{enumerate}
 \item[(a)] For all (any) $(D_1,D_2) \in \mathcal{D} \times \mathcal{D}_1$ the Jacobian $d\Phi(D_1,S,D_2)$ has rank $m$.
 \item[(b)] $d\Phi({\rm I},S,{\rm I})$ has rank $m$. 
 \item[(c)] The linear map 
\[ C_S : \mathfrak{d} \times \mathfrak{f} \times \mathfrak{d}_1 \to \mathfrak{u}~,~ (X,Y,Z) \mapsto  X + Y + SZS^* \]
has rank $m$. 
\end{enumerate}
\end{lemma}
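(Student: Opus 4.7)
The plan is to exploit the fact that for any invertible matrices $L, R \in GL(n,\mathbb{C})$, the maps $M \mapsto LM$ and $M \mapsto MR$ are $\mathbb{R}$-linear isomorphisms of $\mathfrak{gl}(n,\mathbb{C})$ and hence preserve the rank of any linear map with values in $\mathfrak{gl}(n,\mathbb{C})$. I would use this observation to strip off the factors $D_1$, $S$ and $D_2$ from the Jacobian formula of Lemma~\ref{lem:Jac_Phi}, reducing $d\Phi(D_1, S, D_2)$ to $C_S$ at an arbitrary base point.

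Concretely, I would parameterize the three tangent spaces through the corresponding Lie algebras via right translation. Since $\mathcal{D}$ and $\mathcal{D}_1$ consist of commuting diagonal matrices, $T_{D_1}\mathcal{D} = \{X_0 D_1 : X_0 \in \mathfrak{d}\}$ and $T_{D_2}\mathcal{D}_1 = \{Z_0 D_2 : Z_0 \in \mathfrak{d}_1\}$, while $T_S\fix\one = \{Y_0 S : Y_0 \in \mathfrak{f}\}$. Substituting $X = X_0 D_1$, $Y = Y_0 S$, $Z = Z_0 D_2$ into the formula of Lemma~\ref{lem:Jac_Phi}, using $Z_0 D_2 = D_2 Z_0$ from commutativity of diagonal matrices, and rewriting $SZ_0 = (SZ_0 S^*)S$ so as to pull out $SD_2$ on the right, the three summands collapse to
\[
d\Phi(D_1, S, D_2)(X, Y, Z) = D_1\bigl( X_0 + Y_0 + S Z_0 S^*\bigr) S D_2 = D_1 \cdot C_S(X_0, Y_0, Z_0) \cdot S D_2.
\]

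Since the reparameterization $(X, Y, Z) \leftrightarrow (X_0, Y_0, Z_0)$ is a linear isomorphism and the two-sided multiplication $M \mapsto D_1 M \cdot S D_2$ is an $\mathbb{R}$-linear isomorphism of $\mathfrak{gl}(n,\mathbb{C})$, one concludes $\mathrm{rank}\, d\Phi(D_1, S, D_2) = \mathrm{rank}\, C_S$ \emph{for every} choice of $D_1 \in \mathcal{D}$ and $D_2 \in \mathcal{D}_1$. All three conditions (a), (b), (c) therefore reduce to the single condition ``$\mathrm{rank}\, C_S = m$'' and are equivalent; the coincidence of the ``for all'' and ``any'' versions in (a) drops out as a byproduct.

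I do not anticipate any real obstacle; the only delicate point is the choice to parameterize $T_S\fix\one$ by right translation ($Y = Y_0 S$) rather than left translation ($Y = S Y_0$). The latter would yield $X_0 + S Y_0 S^* + S Z_0 S^*$ in place of $X_0 + Y_0 + S Z_0 S^*$, which is $\mathrm{Ad}(S)$-conjugate to $C_S$ and so has the same rank, but does not match the map in (c) verbatim.
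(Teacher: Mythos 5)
Your proposal is correct and rests on exactly the same two ingredients as the paper's proof: the Jacobian formula of Lemma~\ref{lem:Jac_mult}/\ref{lem:Jac_Phi}, the identification $T_S({\rm Fix}(\mathbf{1}))=\mathfrak{f}\cdot S$, and the invariance of rank under two-sided multiplication by invertible matrices. The only difference is organizational: you carry out one computation at a general base point and read off the factorization $d\Phi(D_1,S,D_2)=D_1\cdot C_S\cdot SD_2$, whereas the paper first reduces (a) to (b) via the equivariance $\Phi(CD_1,S,D_2C')=C\Phi(D_1,S,D_2)C'$ and then handles (b) $\Leftrightarrow$ (c) by composing with $R_{S^*}$ at the identity.
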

\begin{prf}
 For the equivalence (a) $\Leftrightarrow$ (b) we observe that $\Phi(C D_1,S, D_2 C') = C \Phi( D_1,S,D_2)C'$ for all $C \in \mathcal{D}$, and $C' \in \mathcal{D}_1$. Thus, by the chain rule, $d\Phi(C D_1,S,D_2 C') =  C d\Phi(D_1,S,D_2) C'$,  what yields the equivalence. 

To see that (b) $\Leftrightarrow$ (c) we note, that by the previous lemma, $d\Phi(I,S,I) (X,Y',Z) =XS+Y'+SZ$, for all $X \in \mathfrak{d}, Y' \in T_S ({\rm Fix}(\mathbf{1})), Z \in \mathfrak{d}_1$. Since $S$ is invertible, the rank of $d\Phi(I,S,I)$ equals the rank of $R_{S^*} \circ d\Phi(I,S,I)$, where $R_{S^*}$ denotes right multiplication with $S^*$. Clearly, $R_{S^*} \circ d\Phi(I,S,I)(X,Y',Z)= X + Y'S^* + SZS^*$ and this map has the same rank as the map $C_S$ from (c), since $Y=  Y'S^*\in \mathfrak{f}$ iff $ Y' \in T_S ({\rm Fix}(\mathbf{1}))$.
\end{prf}

We next translate the condition from part (c) of the previous lemma to a simpler one. In the following lemma and its proof, the real and imaginary parts of complex vectors and matrices are computed componentwise. In particular, given a complex-valued matrix $U$, ${\rm Im}(U)$ is obtained by taking the imaginary parts of the individual entries of $U$ and the rank of  ${\rm Im}(U)$ is its rank as a mapping on $\br^n$. Also note that throughout the proof of the lemma, ``linear'' means ``$\mathbb{R}$-linear''; and matrices are identified with the linear map they induce. 
\begin{lemma}  \label{lem:rank_diff_simple}
Let $S \in {\rm Fix}(\mathbf{1})$, then
\begin{equation} \label{eqn:rank_formula}
 {\rm rank}(d \Phi(I,S,I)) = (n-1)^2 + n + {\rm rank}({\rm Im}(S))~.
\end{equation}
\end{lemma}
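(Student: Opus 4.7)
The plan is to invoke Lemma~\ref{lem:regular_rank}(c) and compute the rank of the linear map $C_S : \mathfrak{d} \times \mathfrak{f} \times \mathfrak{d}_1 \to \mathfrak{u}$ given by $(X,Y,Z) \mapsto X + Y + SZS^*$. Both source and target have real dimension $n^2$, since $\dim\mathfrak{d} + \dim\mathfrak{f} + \dim\mathfrak{d}_1 = n + (n-1)^2 + (n-1) = n^2 = \dim\mathfrak{u}$. Hence by rank-nullity it is enough to determine $\dim\ker C_S$.

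I would then analyze the kernel directly. The equation $X + Y + SZS^* = 0$ forces $Y = -X - SZS^*$, and this $Y$ is automatically skew-Hermitian (using $X^* = -X$, $Z^* = -Z$, and consequently $(SZS^*)^* = -SZS^*$). So the only binding condition coming from $Y \in \mathfrak{f}$ is $Y\mathbf{1} = 0$. Writing $X = i\,\mathrm{diag}(x)$ and $Z = i\,\mathrm{diag}(z)$ with $x \in \mathbb{R}^n$ and $z \in \mathbb{R}^n$ satisfying $z_1 = 0$, I would exploit the hypothesis $S\mathbf{1} = \mathbf{1}$ (which also gives $S^*\mathbf{1} = \mathbf{1}$ by unitarity) to simplify $SZS^*\mathbf{1} = SZ\mathbf{1} = iSz$. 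The requirement $Y\mathbf{1} = 0$ then collapses to the single vector equation $x + Sz = 0$.

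The main (and essentially only) subtle step is to convert this one complex equation into a real count. Since $x$ is real and $Sz = \mathrm{Re}(S)z + i\,\mathrm{Im}(S)z$, the equation decouples cleanly into the independent real constraint $\mathrm{Im}(S)z = 0$ together with the determining relation $x = -\mathrm{Re}(S)z$. Hence $\ker C_S$ is in bijective correspondence with the subspace $W := \{z \in \mathbb{R}^n : z_1 = 0,\ \mathrm{Im}(S)z = 0\}$ of $\mathbb{R}^n$.

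To finish, I would compute $\dim W$ by the standard intersection formula. Set $K := \ker(\mathrm{Im}(S))$ and $V := \{z \in \mathbb{R}^n : z_1 = 0\}$, so $W = K \cap V$. The hypothesis $S\mathbf{1} = \mathbf{1}$ gives $\mathrm{Im}(S)\mathbf{1} = 0$, so $\mathbf{1} \in K$, whereas obviously $\mathbf{1} \notin V$; hence $K + V = \mathbb{R}^n$ and $\dim W = \dim K + \dim V - n = n - 1 - \mathrm{rank}(\mathrm{Im}(S))$. Rank-nullity then delivers $\mathrm{rank}(C_S) = n^2 - (n - 1 - \mathrm{rank}(\mathrm{Im}(S))) = (n-1)^2 + n + \mathrm{rank}(\mathrm{Im}(S))$, which is exactly \eqref{eqn:rank_formula}. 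No serious obstacle is expected; the slightly delicate moment is recognizing that a single complex constraint splits into an independent real equation on $z$ and a determining formula for $x$, which is where the invariant $\mathrm{rank}(\mathrm{Im}(S))$ enters.
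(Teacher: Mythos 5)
Your proposal is correct. Every step checks out: the dimension count $\dim\mathfrak{d}+\dim\mathfrak{f}+\dim\mathfrak{d}_1=n^2=\dim\mathfrak{u}$, the observation that $Y=-X-SZS^*$ is automatically skew-Hermitian so that membership in $\mathfrak{f}$ reduces to $Y\mathbf{1}=0$, the simplification $SZS^*\mathbf{1}=SZ\mathbf{1}=iSz$ via $S^*\mathbf{1}=\mathbf{1}$, the splitting of $x+Sz=0$ into ${\rm Im}(S)z=0$ and $x=-{\rm Re}(S)z$, and the intersection count $\dim(K\cap V)=n-1-{\rm rank}({\rm Im}(S))$ using $\mathbf{1}\in K\setminus V$. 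The route is genuinely different from the paper's: the paper computes the \emph{rank} of $C_S$ additively, first showing that $C_S$ restricted to $\mathfrak{d}\times\mathfrak{f}$ is an isomorphism onto $\mathfrak{n}=\mathfrak{d}\oplus\mathfrak{f}$, then passing to the quotient map $\overline{C}_S:\mathfrak{d}_1\to\mathfrak{u}/\mathfrak{n}$ and constructing an explicit injection $\overline{J}:\mathfrak{u}/\mathfrak{n}\to\mathbb{R}^n$ (induced by $X\mapsto{\rm Re}(X\mathbf{1})$, whose kernel is shown to be $\mathfrak{n}$ by a dimension argument) in order to identify ${\rm rank}(\overline{C}_S)$ with ${\rm rank}({\rm Im}(S))$. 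You instead compute $\ker C_S$ directly and apply rank-nullity, which avoids quotient spaces and the identification of $\ker J$ with $\mathfrak{n}$ altogether; the price is that you must verify the equal-dimension count up front, which you do. Both arguments hinge on the same two facts --- $\mathfrak{d}\cap\mathfrak{f}=\{0\}$ (implicit in your parametrization of the kernel by $(x,z)$ alone) and $S\mathbf{1}=\mathbf{1}$ --- and ${\rm Im}(S)$ enters in essentially the same way (the paper via ${\rm Re}(iSz)=-{\rm Im}(S)z$, you via ${\rm Im}(Sz)={\rm Im}(S)z$). Your version is arguably the more elementary and self-contained of the two; the paper's version makes the structural decomposition of $\mathfrak{u}$ more visible.
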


\begin{prf}
Let $C_S:  \mathfrak{d} \times \mathfrak{f} \times \mathfrak{d}_1 \to \mathfrak{u}$ denote the linear map from Lemma  \ref{lem:regular_rank}. 
Let $\mathfrak{n} = \mathfrak{d} +\mathfrak{f} \subset \mathfrak{u}$. Then $\mathfrak{d} \cap \mathfrak{f} = \{ 0 \}$ implies that the restriction of $C_S$ to $ \mathfrak{d} \times \mathfrak{f}$ is an isomorphism onto $\mathfrak{n}$. This allows for the first step towards  (\ref{eqn:rank_formula})  by considering the quotient map 
\[ \overline{C}_S : \mathfrak{d}_1 \to \mathfrak{u}/\mathfrak{n}~,~ Z \mapsto S Z S^* +\mathfrak{n} ~,\]
By Lemma \ref{lem:regular_rank} and a rank formula for the quotient map we obtain that
\begin{equation} \label{eqn:rank_formula_prelim}
  {\rm rank}(d \Phi(I,S,I)) = {\rm rank}(C_S) = {\rm rank}(C|_{ \mathfrak{d} \times \mathfrak{f}})+ {\rm rank}(\overline{C}_S)= (n-1)^2 + n + {\rm rank}(\overline{C}_S)~.
\end{equation}

In order to prove that $ {\rm rank}(\overline{C}_S)= {\rm rank}({\rm Im}(S))$, we need to free ourselves from the quotient space $\mathfrak{u}/\mathfrak{n}$. For this purpose, let us consider the map 
$T: \mathfrak{u} \to \mathbb{C}^n$, $T(X)  = X \mathbf{1}$.  We claim that
\[
 T(\mathfrak{u}) = V := \{ x \in \mathbb{C}^n ~:~ {\rm Re}\langle x, \mathbf{1}\rangle = 0 \}~.
\]
Indeed, the kernel of $T$ is precisely $\mathfrak{f}$, which has real codimension $2n-1 = {\rm dim}_{\mathbb{R}}(V)$. Thus it suffices to prove that $T(\mathfrak{u}) \subset V$. This, however, follows from the fact that $X = - X^*$, for all $X \in \mathfrak{u}$, since
\begin{eqnarray*} {\rm Re} \langle T(X), \mathbf{1} \rangle & = &  {\rm Re} \langle X \mathbf{1},  \mathbf{1} \rangle = {\rm Re} \langle \mathbf{1}, X^* \mathbf{1} \rangle = 
  {\rm Re} \overline{\langle -X \mathbf{1}, \mathbf{1} \rangle} = -  {\rm Re} \langle T(X), \mathbf{1} \rangle ~.
\end{eqnarray*}

We next consider the linear map  $J: \mathfrak{u} \mapsto \br^n$, $J (X)=  {\rm Re}(T(X))$. 
The kernel of $J$ is then given by $\mathfrak{n}$, as can be seen by yet another dimension argument.
The observation that $T(\mathfrak{u}) = V$ yields that 
\[ J(\mathfrak{u}) = \{ {\rm Re} (v) : v \in T(\mathfrak{u}) \} = \{ w \in \mathbb{R}^n : \langle w, \mathbf{1} \rangle = 0 \}, \] so $J(\mathfrak{u})$ is a subspace of $\br^n$ of dimension $n-1$.  Since the real codimension of $\mathfrak{n}$ is also $n-1$, it is enough to show that 
 $\mathfrak{n} \subset {\rm ker}(J)$. For the latter, note that 
$J(X+Y) = J(X) = {\rm Re}( x_1,\ldots,x_n)$, for all $Y \in \mathfrak{f}$ and $X = {\rm diag}(x_1,\ldots,x_n) \in \mathfrak{d}$. Recall that $X \in \mathfrak{d}$ entails in addition $x_k \in i \mathbb{R}$, whence $J(X+Y)=0$, establishing that ${\rm ker}(J) = \mathfrak{n}$.

This allows to conclude that $J$ induces an {\em injective} linear map $\overline{J} : \mathfrak{u}/\mathfrak{n} \to \mathbb{R}^n$.
Hence, if we define $Q = \overline{J} \circ \overline{C}_S$, then we obtain that
${\rm rank}(\overline{C}_S) = {\rm rank}(Q)$.

Clearly, $Q:\mathfrak{d}_1\to \br^n$ and this map is much easier to treat than $\overline{C}_S$. Indeed, 
given any $Z \in \mathfrak{d}_1$, we employ $S^* \mathbf{1} = \mathbf{1}$ to compute that
\[
 Q(Z) = {\rm Re}(S Z S^* \mathbf{1}) = {\rm Re}(S Z \mathbf{1}) = - {\rm Im}(S)  z~,
\] where $z = (0,z_2,\ldots,z_n) \in \mathbb{R}^n$ is related to $Z$ via $Z = {\rm diag}(0,iz_2,\ldots,iz_n)$. 
This shows that ${\rm rank}(Q) = {\rm rank} ({\rm Im}(S)|_{W_0})$, where $W_0 \subset \mathbb{R}^n$ is the subspace of vectors with first entry equal to zero. Moreover, $W_0+ \mathbb{R} \cdot \mathbf{1}=\br^n$ and ${\rm Im}(S)\mathbf{1}=0$, since $S\mathbf{1}=\mathbf{1}$. Hence $ {\rm rank} ({\rm Im}(S)|_{W_0})= {\rm rank} ({\rm Im}(S))$.

Thus, we have established that ${\rm rank}(\overline{C_S})  = {\rm rank}({\rm Im}(S)) $, so equation (\ref{eqn:rank_formula}) follows from (\ref{eqn:rank_formula_prelim}).

\end{prf}

The above lemma leads to the fundamental fact, that for all $n\in\bn$  the maximal rank of $d\Phi$ equals the dimension of $U(n)$. 
\begin{lemma}\label{lem isi}  For all $n\in\bn$ 
 there is an $S\in {\rm Fix}(\mathbf{1}) \subset U(n)$ with ${\rm rank}(d \Phi(I,S,I)) = n^2$. 
\end{lemma}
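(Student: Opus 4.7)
By Lemma~\ref{lem:rank_diff_simple} one has ${\rm rank}(d\Phi(I,S,I)) = (n-1)^2 + n + {\rm rank}({\rm Im}(S))$ for every $S\in\fix\one$, and $(n-1)^2 + n + (n-1) = n^2$. The problem therefore reduces to producing a single $S\in\fix\one$ whose componentwise imaginary part has rank $n-1$. This is already the largest value conceivable, because writing $S = A + iB$ with $A, B$ real turns $S\mathbf{1} = \mathbf{1}$ into $B\mathbf{1} = 0$, so the range of ${\rm Im}(S) = B$ always sits inside $\mathbf{1}^\perp \subset \br^n$.

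To exhibit such a witness, I would take a one-parameter family inside $U(n)$ that fixes $\mathbf{1}$ and acts by a pure phase on $\mathbf{1}^\perp$. Let $P = I - \tfrac{1}{n}\mathbf{1}\mathbf{1}^T$ be the orthogonal projection onto $\mathbf{1}^\perp$, and for $t \in (0,\pi)$ set
\[
S_t \;=\; e^{it}P + (I-P) \;=\; I + (e^{it}-1)P.
\]
Using $P = P^* = P^2$ and $P(I-P) = 0$, a one-line multiplication yields $S_t^* S_t = I$, so $S_t \in U(n)$; from $P\mathbf{1} = 0$ one gets $S_t\mathbf{1} = \mathbf{1}$, so $S_t \in \fix\one$; and since $P$ is real, ${\rm Im}(S_t) = \sin(t)\,P$, which is a nonzero scalar multiple of the rank $n-1$ projection $P$. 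Feeding $S_t$ into Lemma~\ref{lem:rank_diff_simple} then gives ${\rm rank}(d\Phi(I,S_t,I)) = n^2$, as required.

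I do not anticipate any substantive obstacle here: the heavy lifting is already done in Lemma~\ref{lem:rank_diff_simple}, and once the statement is reformulated as the existence of one $S \in \fix\one$ with imaginary part of maximal rank, the orthogonal decomposition $\br^n = \br\mathbf{1} \oplus \mathbf{1}^\perp$ hands such an $S$ directly. Any explicit $S = U \cdot \diag(1, e^{i\theta_2}, \ldots, e^{i\theta_n}) \cdot U^T$ with $U$ real orthogonal mapping $\mathbf{e}$ to $\tfrac{1}{\sqrt n}\mathbf{1}$ and generic phases $\theta_k \in (0,\pi)$ would serve equally well.
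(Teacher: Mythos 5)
Your proposal is correct and follows essentially the same route as the paper: reduce via Lemma~\ref{lem:rank_diff_simple} to finding $S\in\fix\one$ with ${\rm rank}({\rm Im}(S))=n-1$, then build $S$ by acting trivially on $\bc\cdot\mathbf{1}$ and by a nonreal phase on its orthogonal complement. Your explicit $S_t=I+(e^{it}-1)P$ is just the special case $S'=I$ (with $i$ replaced by $e^{it}$) of the paper's construction $S(v+\alpha\mathbf{1})=iS'v+\alpha\mathbf{1}$ with $S'\in\fix\one\cap O(n)$.
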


\begin{prf}
 Let $W_\br \subset \mathbb{R}^n$ denote the orthogonal complement of $\mathbb{R} \cdot \mathbf{1}$ in $\br^n$, and $W_{\mathbb{C}} \subset \mathbb{C}^n$ denote the orthogonal complement of $\mathbb{C} \cdot \mathbf{1}$ in $\bc^n$. In order to exhibit $S\in{\rm Fix}\one$ with ${\rm rank}(d \Phi(I,S,I)) = n^2$ let us pick $S' \in {\rm Fix}(\mathbf{1}) \cap  O(n)$, where $O(n)$ stands for the orthogonal group.  This entails $S' (W_\br) = W_\br$, so we can define a unitary mapping $S$ on $W_\bc\oplus \bc\cdot 1$ by
\[
 S (v+ \alpha \mathbf{1}) = i S' v+ \alpha \mathbf{1}~,~\alpha \in \mathbb{C}, v \in W_{\mathbb{C}} ~.  
\] Then $S \in {\rm Fix}(\mathbf{1})$, with ${\rm Im}(S)|_{W_\br} = S'|_{W_\br}$. Since $S'(W_\br)= W_\br$, the restriction $ S'|_{W_\br}$ has rank $n-1$.  Moreover, $W_\br\oplus \mathbb{R} \cdot \mathbf{1}=\br^n$ and ${\rm Im}(S)\mathbf{1}=0$, so 
\[
 {\rm rank} ({\rm Im}(S))= {\rm rank} ({\rm Im}(S)|_{W_\br})={\rm rank} (S'|_{W_\br})=n-1.
\]
Thus, Lemma \ref{lem:rank_diff_simple} implies that ${\rm rank}(d \Phi(I,S,I)) = n^2$. 
\end{prf}

With the above lemma we can draw the mentioned conclusion regarding the structure of $\mathcal{B}_n$. 

\begin{theorem} For every $n \in\bn$ the set  $\mathcal{B}_n\subset U(n)$ has a nonempty open interior.
\end{theorem}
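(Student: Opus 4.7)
The plan is to derive the theorem directly from Lemma \ref{lem isi} via the inverse function theorem. Recall that, as noted in the remark after Proposition \ref{prop isi2}, the product manifold $\mathcal{D} \times {\rm Fix}(\mathbf{1}) \times \mathcal{D}_1$ and the target $U(n)$ are both smooth real manifolds of dimension $n^2$ (since $n + (n-1)^2 + (n-1) = n^2$), and $\Phi$ is smooth.

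First, I would invoke Lemma \ref{lem isi} to pick $S \in {\rm Fix}(\mathbf{1})$ with ${\rm rank}(d\Phi(I,S,I)) = n^2$. Because the domain and codomain have equal dimension $n^2$, the Jacobian $d\Phi(I,S,I)$ is then a linear isomorphism between the corresponding tangent spaces. The inverse function theorem therefore provides open neighborhoods $\mathcal{U}$ of $(I,S,I)$ in $\mathcal{D} \times {\rm Fix}(\mathbf{1}) \times \mathcal{D}_1$ and $\mathcal{V}$ of $\Phi(I,S,I) = S$ in $U(n)$ such that $\Phi|_\mathcal{U}: \mathcal{U} \to \mathcal{V}$ is a diffeomorphism. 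In particular, $\mathcal{V} = \Phi(\mathcal{U}) \subseteq \Phi(\mathcal{D} \times {\rm Fix}(\mathbf{1}) \times \mathcal{D}_1) = \mathcal{B}_n$, by Proposition \ref{prop isi}.

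Thus $\mathcal{V}$ is a nonempty open subset of $U(n)$ contained in $\mathcal{B}_n$, so $\mathcal{B}_n$ has nonempty interior, as claimed. There is no real obstacle here: the heavy lifting was done in Lemmas \ref{lem:Jac_Phi}, \ref{lem:regular_rank}, \ref{lem:rank_diff_simple} and \ref{lem isi}. The only subtle point is to ensure that the matching dimensions of source and target make the full-rank condition sufficient to produce an open image, which is exactly the content of the inverse function theorem.
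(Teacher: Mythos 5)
Your argument is correct and is essentially identical to the paper's own proof: both invoke Lemma~\ref{lem isi} to obtain a point where $d\Phi$ has full rank $n^2$, note that source and target have equal dimension, and apply the inverse function theorem to conclude that $\Phi$ is a local diffeomorphism there, so its image $\mathcal{B}_n$ contains a nonempty open set. No gaps.
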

\begin{prf}
By Proposition~\ref{prop isi}  $\mathcal{B}_n =\Phi( \mathcal{D} \times {\rm Fix}(\mathbf{1}) \times \mathcal{D}_1 )$ and $\Phi$ is a map between groups of dimension $n^2$. Lemma~\ref{lem isi} guarantees an existence of a point in $ \mathcal{D} \times {\rm Fix}(\mathbf{1}) \times \mathcal{D}_1$, where the Jacobian of $\Phi$ has the full rank. Therefore, by the inverse function theorem, $\Phi$ is a diffeomorphism around this point.
\end{prf}

\begin{corollary}$\mathcal{B}_n = U(n)$ if and only if  $\mathcal{B}_n \cdot \mathcal{B}_n \subset \mathcal{B}_n$.
\end{corollary}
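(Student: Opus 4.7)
The forward direction is immediate: if $\mathcal{B}_n = U(n)$, then $\mathcal{B}_n \cdot \mathcal{B}_n = U(n) \cdot U(n) = U(n) = \mathcal{B}_n$. So the real content is the reverse implication, and my plan is to argue that under the hypothesis $\mathcal{B}_n \cdot \mathcal{B}_n \subseteq \mathcal{B}_n$, the set $\mathcal{B}_n$ is an open subgroup of the connected group $U(n)$, hence equals $U(n)$.

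First I would check that $\mathcal{B}_n$ is already closed under the two remaining group operations. The identity $I \in \mathcal{B}_n$, since $I\mathbf{1} = \mathbf{1} \in \bt^n$, so $\mathbf{1}$ is biunimodular for $I$. For inverses, suppose $A \in \mathcal{B}_n$ with biunimodular vector $v$ and set $w = Av \in \bt^n$. Since $A^{-1} = A^*$ and $A^* w = v \in \bt^n$, the vector $w$ is biunimodular for $A^{-1}$, giving $A^{-1} \in \mathcal{B}_n$. Combined with the hypothesis $\mathcal{B}_n \cdot \mathcal{B}_n \subseteq \mathcal{B}_n$, this makes $\mathcal{B}_n$ a subgroup of $U(n)$.

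Next I would upgrade the ``nonempty interior'' conclusion of the previous theorem to ``open''. Let $A_0$ be an interior point of $\mathcal{B}_n$, and let $V \subseteq \mathcal{B}_n$ be an open neighborhood of $A_0$. For any $A \in \mathcal{B}_n$, the left translation $L_{A A_0^{-1}}: U(n) \to U(n)$ is a homeomorphism, and it sends $V$ into $\mathcal{B}_n$ (by the subgroup property we just established) while sending $A_0$ to $A$. Hence $L_{A A_0^{-1}}(V)$ is an open neighborhood of $A$ contained in $\mathcal{B}_n$, proving that $\mathcal{B}_n$ is open in $U(n)$.

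Finally I would invoke the standard topological fact that an open subgroup $H$ of a connected topological group $G$ must equal $G$: the complement $G \setminus H$ is a union of left cosets of $H$, each of which is open, so $H$ is also closed; since $H$ is nonempty and $G$ is connected, $H = G$. Applying this with $G = U(n)$ (which is connected) and $H = \mathcal{B}_n$ yields $\mathcal{B}_n = U(n)$. No step in this outline is really an obstacle — the only ingredients are the previous theorem giving a nonempty interior, the standard subgroup–with–interior argument, and the connectedness of $U(n)$; the mild point to be careful about is simply remembering to verify closure under inverses, which follows cleanly from $A^{-1} = A^*$ acting on the image $Av$.
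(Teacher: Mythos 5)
Your proof is correct and follows essentially the same route as the paper's: both establish that $\mathcal{B}_n$ is symmetric (closed under inverses, via $A^{-1}=A^*$), combine this with the hypothesis to get a subgroup with nonempty interior, conclude that it is an open subgroup, and invoke connectedness of $U(n)$. You simply spell out the standard details (identity, translation argument, open subgroups are closed) that the paper leaves implicit.
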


\begin{prf}
Clearly, $A\in\mathcal{B}_n$ iff $A^{-1}\in\mathcal{B}_n$. Therefore, $\mathcal{B}_n$ is a symmetric subset of $U(n)$ that, by the above theorem,  has a nonempty open interior.  If, in addition, it is closed under multiplication, then $\mathcal{B}_n$ must be an open subgroup. Since $U(n)$ is connected, it follows that $\mathcal{B}_n = U(n)$. The converse is clear.
\end{prf}

The results of this subsection can be compared with a theorem of De Vos and  De Baerdemacker based on Hurwitz parametrization and stating that $(\mathcal{B}_n)^{n-1}=U(n)$.

\subsection{The structure of  $\bi(A)$}

Regarding the structure of  $\bi(A)$ it is crucial to recall Conjecture~\ref{bs}. In light of the hypothesis imposed by Bj\"orck and Saffari, the main issue is to establish the cardinality of  $\bi(A)$. Therefore, there is a merit in proving the following.
\begin{proposition}
For almost all $A \in U(n)$ the set $\bi(A)$ is finite. 
\end{proposition}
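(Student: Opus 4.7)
The plan is to apply Sard's theorem to the smooth map $\Phi: \mathcal{D} \times {\rm Fix}(\mathbf{1}) \times \mathcal{D}_1 \to U(n)$ introduced earlier in this subsection. As noted in the remark following the definition of $\Phi$, both source and target are smooth real manifolds of the same dimension $n^2$, and the continuous bijection $\bi(A) \leftrightarrow \Phi^{-1}(A)$ established in the preceding proposition reduces the task to showing that the fiber $\Phi^{-1}(A)$ is finite for almost every $A \in U(n)$.

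First I would invoke Sard's theorem for the smooth map $\Phi$ between equidimensional manifolds: the set of critical values, i.e., images of points where $d\Phi$ fails to be invertible, has Lebesgue measure zero in $U(n)$. Since Haar measure on the compact Lie group $U(n)$ is absolutely continuous with respect to the measure induced by any smooth atlas, the set of critical values also has Haar measure zero, and thus regular values form a set of full Haar measure.

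Next I would analyze the preimage of a regular value $A$. If $A \notin \mathcal{B}_n$, then $\Phi^{-1}(A)$ is empty and trivially finite. Otherwise every point of $\Phi^{-1}(A)$ is a regular point of $\Phi$, so $d\Phi$ is an isomorphism there; by the inverse function theorem, $\Phi$ is then a local diffeomorphism around each such preimage point, forcing $\Phi^{-1}(A)$ to be discrete. Since the source $\mathcal{D} \times {\rm Fix}(\mathbf{1}) \times \mathcal{D}_1$ is compact (as a product of three compact groups), the closed discrete set $\Phi^{-1}(A)$ is finite, and the bijection transfers finiteness to $\bi(A)$. Note that this step does not require knowing in advance that regular values exist in $\mathcal{B}_n$, though Lemma~\ref{lem isi} guarantees that the conclusion is non-vacuous.

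I do not expect a serious obstacle: Sard's theorem, the inverse function theorem, compactness of the source, and the bijection from the preceding proposition fit together cleanly. The only point worth a word of care is the interpretation of ``almost all'' with respect to Haar measure, which reduces to the standard equivalence between Haar measure and smooth-manifold Lebesgue measure on a compact Lie group.
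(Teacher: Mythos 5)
Your proposal is correct and follows essentially the same route as the paper: it uses the bijection $\bi(A)\leftrightarrow\Phi^{-1}(A)$ from Proposition~\ref{prop isi2}, Sard's theorem for the equidimensional smooth map $\Phi$, and the observation that the fiber over a regular value is discrete in the compact source, hence finite. Your extra remark identifying Haar measure with the chart-induced Lebesgue-null sets is a point the paper leaves implicit, but the argument is the same.
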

\begin{prf} This follows from  Proposition~\ref{prop isi2} and Sard's theorem. By Proposition~\ref{prop isi2} we can replace $\bi(A)$ by $\Phi^{-1}(A)$. Clearly, we can concentrate on the case $\Phi^{-1}(A)\not=\emptyset$.  Recall that the regular points of a smooth map are those for which the rank of the Jacobian is full; any point that is not regular is called critical. A regular value is a value, such that all the points in the preimage are regular.   If $A$ is a regular value of $\Phi$, then $\Phi^{-1}(A)$  must be a discrete subset of the compact space $\mathcal{D} \times {\rm Fix}(\mathbf{1}) \times \mathcal{D}_1$, hence finite. Thus, if $\Phi^{-1}(A)$ is infinite, then $A$ must be the image of a critical point of $\Phi$. However,  Sard's theorem states that the images of the critical points constitute a set of the Haar measure zero, what concludes the proof.

\end{prf}

In order to achieve a deeper understanding of the set $\bi(A)$ for a given $A\in U(n)$, we consider the   {\em phasing manifold} of $A$
$$\mathcal{M}_A = \mathcal{D} A \mathcal{D}_1$$
introduced in \cite{TaZy}, and the stabilizer group  $\mathcal{D}_A=\{(E_1,E_2)\in \mathcal{D}  \times \mathcal{D}_1 : E_1AE_2=A\}$.

 By Proposition~\ref{prop isi2} the next result establishes a bijection 
\[ \bi(A) \longleftrightarrow \left( \mathcal{M}_A \cap {\rm Fix}(\mathbf{1}) \right) \times \mathcal{D}_A ~,\]
that can be explained in the following way.
Intuitively, finding all biunimodular vectors for $A$ amounts to finding all $S\in   {\rm Fix}(\mathbf{1})$ for which there exists a pair $(D_1,D_2)\in \mathcal{D}  \times \mathcal{D}_1$ such that $A=D_1SD_2$ (i.e. $S=D_1^{-1}AD_2^{-1}\in  \mathcal{M}_A $) and, after that, finding other such pairs for $S$ by considering the set $\{(D_1E_1,D_2E_2): (E_1,E_2)\in \mathcal{D}_A\}$. We shall prove that this simple scheme yields all members of $\bi(A)$ in a one-to one fashion.
\begin{theorem}\label{bije} There is a bijection $\Psi :  \left( \mathcal{M}_A \cap {\rm Fix}(\mathbf{1}) \right) \times \mathcal{D}_A \to \Phi^{-1}(A)$.
\end{theorem}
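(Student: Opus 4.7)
The plan is to view $\Phi^{-1}(A)$ as a $\mathcal{D}_A$-bundle over $\mathcal{M}_A \cap \fix\one$ and then trivialize it by choosing a set-theoretic section.

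First I would define the projection $\pi: \Phi^{-1}(A) \to \fix\one$ by $\pi(D_1,S,D_2)=S$. Note that if $A=D_1SD_2$ with $D_1\in\mathcal{D}$ and $D_2\in\mathcal{D}_1$, then $S=D_1^{-1}AD_2^{-1}\in \mathcal{D}A\mathcal{D}_1=\mathcal{M}_A$. Conversely, every $S\in\mathcal{M}_A\cap\fix\one$ arises this way, so the image of $\pi$ is precisely $\mathcal{M}_A\cap\fix\one$.

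Next, I would introduce the action of the stabilizer group $\mathcal{D}_A$ on $\Phi^{-1}(A)$ given by
\[
(E_1,E_2)\cdot(D_1,S,D_2)=(E_1D_1,\,S,\,D_2E_2).
\]
This is well-defined, because $(E_1D_1)S(D_2E_2)=E_1(D_1SD_2)E_2=E_1AE_2=A$ by the definition of $\mathcal{D}_A$, and it clearly preserves the fibers of $\pi$. The action is \emph{free}, since $E_1D_1=D_1$ and $D_2E_2=D_2$ force $E_1=E_2=I$. The action is \emph{transitive} on each fiber: given $(D_1,S,D_2)$ and $(D_1',S,D_2')$ both sent to $S$ by $\pi$, the pair $E_1:=D_1'D_1^{-1}\in\mathcal{D}$ and $E_2:=D_2^{-1}D_2'$ lies in $\mathcal{D}\times\mathcal{D}_1$ (for $E_2$, both $D_2,D_2'\in\mathcal{D}_1$ have $1$ in the first diagonal entry, so the same holds for $D_2^{-1}D_2'$), and a direct computation gives $E_1AE_2=D_1'D_1^{-1}(D_1SD_2)D_2^{-1}D_2'=D_1'SD_2'=A$, so $(E_1,E_2)\in\mathcal{D}_A$ and it sends $(D_1,S,D_2)$ to $(D_1',S,D_2')$.

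With these observations in place, I would pick any set-theoretic section $\sigma:\mathcal{M}_A\cap\fix\one\to\Phi^{-1}(A)$ of $\pi$, i.e.\ for every $S$ a choice of a triple $\sigma(S)=(D_1^S,S,D_2^S)\in\Phi^{-1}(A)$, and define
\[
\Psi\bigl(S,(E_1,E_2)\bigr)=(E_1,E_2)\cdot\sigma(S)=(E_1D_1^S,\,S,\,D_2^SE_2).
\]
Surjectivity of $\Psi$ follows from transitivity of the $\mathcal{D}_A$-action on each fiber of $\pi$, and injectivity follows from freeness together with the fact that $\Psi(S,\cdot)$ has image contained in $\pi^{-1}(S)$, so the $S$-coordinate can be recovered from $\Psi(S,(E_1,E_2))$. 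The only mildly delicate point is the bookkeeping that confirms $\mathcal{D}_A\subset\mathcal{D}\times\mathcal{D}_1$ really acts by left/right multiplication without leaving $\mathcal{D}\times\fix\one\times\mathcal{D}_1$; once that is checked the theorem is a formal consequence. Note also that only a set-theoretic bijection is asserted, so no continuity or measurability of $\sigma$ is required, which removes what would otherwise be the main obstacle.
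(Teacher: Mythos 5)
Your proof is correct and is essentially the paper's own argument: your set-theoretic section $\sigma$ is exactly the paper's choice map $\psi$ with $\psi_1(S)\,S\,\psi_2(S)=A$, and your formula $(E_1,E_2)\cdot\sigma(S)=(E_1D_1^S,S,D_2^SE_2)$ coincides (using commutativity of diagonal matrices) with the paper's $\Psi\left(S,(E_1,E_2)\right)=\left(E_1\psi_1(S),S,E_2\psi_2(S)\right)$. Packaging the injectivity/surjectivity checks as freeness and fiberwise transitivity of the $\mathcal{D}_A$-action is just a tidier organization of the same computations.
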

\begin{prf}
We can pick a map $\psi : \mathcal{M}_A \cap {\rm Fix}(\mathbf{1}) \to \mathcal{D}  \times \mathcal{D}_1$, $\psi(S)=(\psi_1(S),\psi_2(S))$, such that 
$\psi_1(S)\,S\,\psi_2(S)=A$. Indeed, by definition, for every $S\in \mathcal{M}_A \cap {\rm Fix}(\mathbf{1})$ there is $(D_1,D_2)\in  \mathcal{D}  \times \mathcal{D}_1$, such that $S=D_1AD_2$, thus we can pick one of such pairs and set $(\psi_1(S),\psi_2(S))=(D_1^{-1},D_2^{-1})$.

Then, the map $\Psi: \left( \mathcal{M}_A \cap {\rm Fix}(\mathbf{1}) \right) \times \mathcal{D}_A \to \Phi^{-1}(A)$ is given by
\[
\Psi\left(S,(E_1,E_2)\right)=\left(E_1\psi_1(S),S,E_2\psi_2(S)\right).
\]
To see that the image of $\Psi$ is contained in $\Phi^{-1}(A)$, it is enough to note that
\[
\Phi(\Psi\left(S,(E_1,E_2)\right))=E_1\psi_1(S)\,S\,\psi_2(S)E_2=E_1AE_2=A.
\]
The map is one-to-one, since $\Psi\left(S,(E_1,E_2)\right)=\Psi\left(S',(E'_1,E'_2)\right)$ implies that $S=S'$ and this, in turn, implies that $E_i=E_i'$, for $i=1,2$. To check that the map is onto, we take an arbitrary element $(G_1,S,G_2)\in\Phi^{-1}(A)$ and easily verify that $\Psi\left(S,(E_1,E_2)\right)=(G_1,S,G_2)$ with $E_i=G_i\,(\psi_i(S))^{-1}$, for $i=1,2$. Since
$G_1(\psi_1(S))^{-1}A\,(\psi_2(S))^{-1}G_2=G_1SG_2=A$, we get that
 $(E_1,E_2)\in\mathcal{D}_A $ and end the proof.
\end{prf}
\begin{remark}
Since in the above theorem it is hard to guarantee the continuity of $\psi$, we can not conclude that the bijection $\Psi$ is continuous. We mention this, because otherwise, by Proposition~\ref{prop isi2} the set $\bi(A)$ would be homeomorphic to  $ (\mathcal{M}_A \cap {\rm Fix}(\mathbf{1}) )\times \mathcal{D}_A$.
\end{remark}

With Theorem~\ref{bije}, we can concentrate on the set $ \left( \mathcal{M}_A \cap {\rm Fix}(\mathbf{1}) \right)\times \mathcal{D}_A $, in order to investigate the cardinality of $\bi(A)$. For that, we need a better understanding of the phasing manifold  $\mathcal{M}_A$,  for a given $A\in U(n)$. This can be obtained by 
 defining the mapping
\[
 \Phi_A : \mathcal{D} \times \mathcal{D}_1 \to U(n)~,~\Phi_A(D_1,D_2) = D_1 A D_2~
\] 
and noting that $\mathcal{M}_A = \Phi_A(\mathcal{D} \times \mathcal{D}_1)$.
This setup allows for making some useful observations.
\begin{lemma}\label{mani} For every $A \in U(n)$ the set 
$\mathcal{M}_A$ is the orbit of $A$ under the left action  of $\mathcal{D} \times \mathcal{D}_1$ on $U(n)$ given by $\left( (D_1,D_2), A \right) \mapsto D_1 A D_2$. In particular, $\mathcal{M}_A$ is a closed submanifold of $U(n)$ with $\mathcal{M}_A \simeq \left( \mathcal{D} \times \mathcal{D}_1\right) / \mathcal{D}_A$, so ${\rm dim}(\mathcal{M}_A)+{\rm dim}(\mathcal{D}_A)=2n-1$.
\end{lemma}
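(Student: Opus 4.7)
The first assertion is essentially a restatement of the definition. The set $\mathcal{M}_A = \mathcal{D} A \mathcal{D}_1 = \Phi_A(\mathcal{D}\times \mathcal{D}_1)$ is by construction the image of the map $(D_1,D_2)\mapsto D_1 A D_2$, which is precisely the orbit map of the smooth left action of the Lie group $G := \mathcal{D}\times \mathcal{D}_1$ on $U(n)$ defined by $((D_1,D_2),U)\mapsto D_1 U D_2$. The isotropy subgroup of $A$ under this action is exactly $\mathcal{D}_A$ by definition, and since $\mathcal{D}_A = \Phi_A^{-1}(A)$ is the preimage of a point under a continuous map, it is a closed (hence compact) subgroup of $G$.

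The plan is to invoke the standard orbit-stabilizer theorem for smooth Lie group actions. For any smooth action of a Lie group $G$ on a manifold $M$ and any $p \in M$, the orbit $G\cdot p$ carries a unique smooth structure making the bijection $G/G_p \to G\cdot p$, $gG_p \mapsto g\cdot p$, into a $G$-equivariant diffeomorphism. When $G$ is compact, the orbit is automatically an embedded (closed) submanifold of $M$, because the orbit map $G \to M$ descends to a continuous injection $G/G_p \to M$ from a compact Hausdorff space to a Hausdorff space, and such a map is automatically a topological embedding; combined with the smooth structure from $G/G_p$, this yields a closed embedded submanifold.

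I would then apply this directly. Since $\mathcal{D}\cong \mathbb{T}^n$ and $\mathcal{D}_1 \cong \mathbb{T}^{n-1}$ are compact tori, $G = \mathcal{D}\times\mathcal{D}_1$ is a compact Lie group, and $\mathcal{D}_A$ is a closed subgroup. Hence $\mathcal{M}_A = G\cdot A$ is a closed embedded submanifold of $U(n)$, diffeomorphic to $(\mathcal{D}\times\mathcal{D}_1)/\mathcal{D}_A$ as claimed. The dimension formula now drops out of the standard identity $\dim(G/H) = \dim G - \dim H$: we have $\dim \mathcal{D} = n$ and $\dim \mathcal{D}_1 = n-1$, so
\[
\dim \mathcal{M}_A \;=\; \dim(\mathcal{D}\times\mathcal{D}_1) - \dim \mathcal{D}_A \;=\; (2n-1) - \dim \mathcal{D}_A,
\]
which rearranges to $\dim \mathcal{M}_A + \dim \mathcal{D}_A = 2n-1$.

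I do not expect a genuine obstacle here; the entire argument is a packaging of the orbit-stabilizer theorem for compact group actions. The only point that deserves explicit verification is that $\Phi_A$ really is the orbit map of a well-defined group action (i.e.\ that $((D_1,D_2)(D_1',D_2'))\cdot U = (D_1,D_2)\cdot((D_1',D_2')\cdot U)$ holds), which is immediate once one notes that $\mathcal{D}$ and $\mathcal{D}_1$ commute with each other as operators acting on the two sides, so the action is indeed a left action of the direct product group. Everything else — closedness, the submanifold structure, and the dimension count — is then a direct consequence of compactness of $G$ and of $\mathcal{D}_A$ being a closed subgroup.
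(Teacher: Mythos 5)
Your proposal is correct and follows essentially the same route as the paper: verify the map is a genuine left action of the abelian product group $\mathcal{D}\times\mathcal{D}_1$, recognize $\mathcal{M}_A$ as the orbit and $\mathcal{D}_A$ as the stabilizer, use compactness to get that the induced continuous bijection $(\mathcal{D}\times\mathcal{D}_1)/\mathcal{D}_A\to\mathcal{M}_A$ is a homeomorphism (hence a closed submanifold), and read off the dimension formula from $\dim(G/H)=\dim G-\dim H$. No gaps.
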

\begin{prf}
 Note that the map  $\left( (D_1,D_2), A \right) \mapsto D_1 A D_2$ indeed defines an action of the product group, since the diagonal group is commutative; and clearly $\mathcal{M}_A$ is the orbit of $A$. The remaining statements follow from this: $\Phi_A$ is just the quotient map with respect to this action, what implies that the differential of $\Phi_A$ has constant rank; and this in turn allows to define a manifold structure on $\mathcal{M}_A$. In addition, $\mathcal{M}_A$ is compact, hence closed, and $\Phi_A$ induces a continuous bijection $  \left( \mathcal{D} \times \mathcal{D}_1\right) / \mathcal{D}_A \to \mathcal{M}_A$ between compact Hausdorff spaces, which then has to be a homeomorphism. Therefore,  ${\rm dim}(\mathcal{M}_A)+{\rm dim}(\mathcal{D}_A)={\rm dim}( \mathcal{D} \times \mathcal{D}_1)=2n-1$.
\end{prf}

\begin{corollary}
 ${\rm dim}(\mathcal{M}_A)=2n-1 \iff  {\rm dim}(\mathcal{D}_A)=0\iff \mathcal{D}_A {\rm\,\,\, is\,\, finite.}$
\end{corollary}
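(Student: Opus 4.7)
The statement is a direct consequence of Lemma~\ref{mani} combined with some basic facts about compact Lie groups, so I expect no significant obstacle. The plan is as follows.

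First, I would derive the equivalence $\dim(\mathcal{M}_A) = 2n-1 \iff \dim(\mathcal{D}_A) = 0$ immediately from the dimension identity $\dim(\mathcal{M}_A) + \dim(\mathcal{D}_A) = 2n-1$ established at the end of Lemma~\ref{mani}. This is purely arithmetic once one recalls that dimensions of manifolds are non-negative integers.

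Next, for the equivalence $\dim(\mathcal{D}_A) = 0 \iff \mathcal{D}_A \textrm{ is finite}$, I would observe that $\mathcal{D}_A$ is a closed subgroup of the compact Lie group $\mathcal{D}\times \mathcal{D}_1 \cong \mathbb{T}^n \times \mathbb{T}^{n-1}$ (closedness follows because $\mathcal{D}_A$ is defined by the closed condition $E_1 A E_2 = A$). By Cartan's closed subgroup theorem, $\mathcal{D}_A$ is itself a compact Lie group. A Lie group has dimension $0$ precisely when it is discrete; and a discrete compact space is finite. Conversely, if $\mathcal{D}_A$ is finite, then it is in particular a $0$-dimensional manifold.

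The hard part, if there is one at all, is merely to cite the correct structural results (closed subgroup theorem, discreteness $\Leftrightarrow$ dimension zero for Lie groups). Everything else is bookkeeping built on Lemma~\ref{mani}.
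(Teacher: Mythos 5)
Your proposal is correct and follows essentially the same route as the paper: the first equivalence is read off from the dimension formula in Lemma~\ref{mani}, and the second from the fact that a zero-dimensional closed subgroup of the compact group $\mathcal{D}\times\mathcal{D}_1$ is discrete, hence finite. The only difference is that you spell out the closed subgroup theorem explicitly, which the paper leaves implicit.
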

\begin{proof}The first equivalence follows immediately from the dimension formula given in the above lemma. In the second equivalence one implication is obvious. For the other implication we note that, if  ${\rm dim}(\mathcal{D}_A)=0$ then $ \mathcal{D}_A$ must be a discrete subgroup of the compact group $ \mathcal{D} \times \mathcal{D}_1$, so it must be finite.
\end{proof}
 
At this stage we can provide the following explanation on the cardinality $|\bi(A)|$ of $\bi(A)$.

Obviously, for $n=1$ we have $|\bi(A)|=1$. When $n=2$, then $|\bi(A)|$ equals to 2 or to the continuum $\mathfrak{c}$. For $n=3$ we have only found examples 
 where $|\bi(A)|$ equals  to 4, 5, 6 or $\mathfrak{c}$. By Theorem~\ref{bije},  $|\bi(A)|$ is equal to the cardinality of the set $ \left( \mathcal{M}_A \cap {\rm Fix}(\mathbf{1}) \right)\times \mathcal{D}_A $. Therefore,  to see the general picture, we employ the above corollary together with Theorem~\ref{IW} and get the following:

\begin{enumerate}
 \item If ${\rm dim}(\mathcal{M}_A)<2n-1$ then
 $|\bi(A)|=\mathfrak{c}$.

 \item If ${\rm dim}(\mathcal{M}_A)=2n-1$ then
 \begin{enumerate}
\item $|\bi(A)|\in \bn$ or
\item $\bi(A)$ is infinite but countable (no examples) or
\item $\bi(A)$ is  uncountable.
\end{enumerate}
\end{enumerate}
\begin{remark} Regarding the case ${\rm dim}(\mathcal{M}_A)<2n-1$, it is clear that for the identity matrix $I$ (and any diagonal matrix) we have that
$\mathcal{M}_I=\mathcal{D}$, so ${\rm dim}(\mathcal{M}_I)=n$. Moreover, there are examples of matrices $A\in U(n)$ with ${\rm dim}(\mathcal{M}_A)$ in the range from $n$ to $2n-1$.
\end{remark}
We would like to end this section by getting closer to understanding the classical Fourier case. As it turns out, for the Fourier matrix $F_n$ the stabilizer $\mathcal{D}_{F_n}$ is trivial.

\begin{lemma} For every $n\in \bn$ we have that $\mathcal{D}_{F_n}=(I,I)$.
\end{lemma}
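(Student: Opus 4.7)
The plan is to exploit the fact that every entry of $F_n$ is nonzero. Let $E_1 = \diag(a_1, \dots, a_n) \in \mathcal{D}$ and $E_2 = \diag(b_1, \dots, b_n) \in \mathcal{D}_1$, so that $a_j, b_k \in \bt$ and, by definition of $\mathcal{D}_1$, we have $b_1 = 1$. The equation $E_1 F_n E_2 = F_n$ translates entrywise into
\[
a_j\, (F_n)_{j,k}\, b_k = (F_n)_{j,k}, \qquad 1 \le j,k \le n.
\]

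First I would observe that $(F_n)_{j,k} = \tfrac{1}{\sqrt n} e^{-2\pi i jk/n}$ is never zero, so one may divide by $(F_n)_{j,k}$ to obtain the cleaner relation $a_j b_k = 1$ for every pair $(j,k)$. Next, I would specialize this to $k=1$ and use $b_1 = 1$ to conclude $a_j = 1$ for all $j$, i.e.\ $E_1 = I$. Plugging $a_j = 1$ back into $a_j b_k = 1$ for arbitrary $k$ then yields $b_k = 1$ for all $k$, i.e.\ $E_2 = I$. This gives $(E_1, E_2) = (I, I)$, as claimed.

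There is essentially no obstacle: the entire argument is a one-line consequence of the fact that $F_n$ has no zero entries, together with the normalization $b_1 = 1$ built into $\mathcal{D}_1$. The only conceptual point worth emphasizing in the writeup is that this proof exhibits precisely why the Fourier matrix sits at the ``generic'' end of the spectrum in Lemma~\ref{mani}: the stabilizer $\mathcal{D}_{F_n}$ is as small as possible, so by the dimension formula $\dim(\mathcal{M}_{F_n}) = 2n-1$, placing $F_n$ squarely in case (2) of the cardinality trichotomy preceding the lemma, which is consistent with Conjecture~\ref{bs}.
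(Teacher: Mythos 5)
Your proof is correct, but it takes a genuinely different route from the paper. The paper's argument is structural: from $E_1F_nE_2=F_n$ it deduces $E_2=F_n^*E_1^*F_n$, so the diagonal matrix $E_2$ is simultaneously a ``modulation-type'' and a ``convolution-type'' operator, hence commutes with all time shifts and all frequency shifts; Schwinger's result that these generate a basis of $\mathfrak{gl}(n,\mathbb{C})$ then forces $E_2$ to be scalar, and the normalization in $\mathcal{D}_1$ pins it down to $I$. Your argument instead compares entries directly: writing $E_1=\mathrm{diag}(a_1,\dots,a_n)$, $E_2=\mathrm{diag}(b_1,\dots,b_n)$, the relation $a_j(F_n)_{j,k}b_k=(F_n)_{j,k}$ together with the non-vanishing of the entries of $F_n$ gives $a_jb_k=1$ for all $j,k$, and $b_1=1$ finishes the job. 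This is more elementary (no appeal to the Schwinger basis) and actually proves a stronger statement: $\mathcal{D}_A$ is trivial for \emph{any} matrix $A$ whose first column has no zero entries (in particular for any unitary $A$ with all entries nonzero), whereas the paper's argument is tied to the Fourier structure. What the paper's route buys in exchange is a conceptual explanation via the Weyl--Heisenberg picture, which fits the harmonic-analytic theme of the surrounding discussion; but as a proof of the lemma itself, your version is cleaner and complete.
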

\begin{prf}
Let $(E_1,E_2)\in\mathcal{D}_{F_n}$, i.e. $ (E_1,E_2)\in  \mathcal{D} \times \mathcal{D}_1$ and $E_1F_nE_2=F_n$ . Then $E_2=F_n^*E_1^*F_n$. This means that $E_2$ commutes with all translations (time shifts) and all modulations (frequency shifts). A result going back to Schwinger asserts that translations and modulations generate a basis for the space of all matrices $\mathfrak{gl}(n,\bc)$.\footnote{If $T$ is a time shift by one, and $M$ is a similar frequency shift, then $\{\frac1{\sqrt n}T^kM^l:k,l=1,\dots,n\}$ form an orthonormal basis of $\mathfrak{gl}(n,\bc)$ (see \cite{sch}).} Therefore, $E_2$ commutes with all matrices in this space, so $E_2=\pm I$.  Since $E_2\in\mathcal{D}_1$, we get that $E_2=I$, so $E_1=I$ as well.
\end{prf}

The above lemma together with Theorem~\ref{bije} and Lemma~\ref{mani} entail the following information on the set of all biunimodular vectors in the classical Fourier case.
\begin{theorem}
For every $n\in\bn$ there is a bijection between $\bi(F_n)$ and $ \mathcal{M}_{F_n} \cap {\rm Fix}(\mathbf{1})$, where $\mathcal{M}_{F_n}$ is the phasing manifold of dimension $2n-1$.
\end{theorem}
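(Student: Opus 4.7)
The proof will be a straightforward assembly of three results already established in the paper, so there is no real obstacle; the task is mainly to chain the pieces together correctly.

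First I would apply Proposition~\ref{prop isi2} to the Fourier matrix $F_n$ to obtain a bijection $\bi(F_n) \leftrightarrow \Phi^{-1}(F_n)$. Then I would invoke Theorem~\ref{bije}, which provides a bijection
\[
\Psi : \left( \mathcal{M}_{F_n} \cap {\rm Fix}(\mathbf{1}) \right) \times \mathcal{D}_{F_n} \longrightarrow \Phi^{-1}(F_n).
\]
The key input specific to the Fourier case is the lemma proved just above the theorem, which asserts $\mathcal{D}_{F_n} = \{(I,I)\}$. The product with a one-point set collapses, so composing $\Psi$ with the bijection of Proposition~\ref{prop isi2} yields the desired bijection between $\bi(F_n)$ and $\mathcal{M}_{F_n}\cap {\rm Fix}(\mathbf{1})$.

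For the dimension claim I would use Lemma~\ref{mani}, which gives the dimension formula ${\rm dim}(\mathcal{M}_A) + {\rm dim}(\mathcal{D}_A) = 2n-1$ for any $A \in U(n)$. Specialising to $A = F_n$ and using that $\mathcal{D}_{F_n}$ is a one-point group (hence of dimension zero), we immediately conclude ${\rm dim}(\mathcal{M}_{F_n}) = 2n-1$.

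In short, the entire proof is: combine Proposition~\ref{prop isi2}, Theorem~\ref{bije}, the triviality of $\mathcal{D}_{F_n}$, and Lemma~\ref{mani}. No new construction is needed, and no calculation beyond noting that a product with a singleton is canonically identified with the other factor. I would phrase the proof in two or three sentences and refer to the preceding results rather than reproducing any of their arguments.
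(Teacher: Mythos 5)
Your proposal is correct and matches the paper exactly: the theorem is stated there as an immediate consequence of Proposition~\ref{prop isi2}, Theorem~\ref{bije}, the lemma establishing $\mathcal{D}_{F_n}=\{(I,I)\}$, and the dimension formula of Lemma~\ref{mani}. Nothing further is needed.
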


The above theorem indicates the resilience of Conjecture~\ref{bs}. Another approach towards this conjecture has been designed by Gabidulin. Unfortunately, even in the prime case, where some details are provided, he does not explain how to exclude that $\bi(F_n)$ is infinite but countable.
Therefore, the only accessible confirmation of the conjecture in the prime case is \cite{Ha}.

\section{Numerical Results}\label{num:tra}

\subsection{A simple algorithm and its properties}\label{sect:algo}

In the proof of Theorem~\ref{char}(b) we have shown that  $\|A\|_{\infty\to1}\le n$  for every $A\in\ U(n)$.
As we pointed out in Remark~\ref{remchar}, biunimodular vectors for $A$ are precisely those, where the norm $\|A\|_{\infty\to1}=n$ is attained. Thus, if $v$ is a biunimodular vector of $A$, then we can think of it as of a point on $\bt^n$, where the norm $\|Av\|_1$ is maximized.

In order to maximize the $\ell^1$-norm it is convenient to notice, that for every $v\in\bc^n$ with $|v|\le 1$ (entrywise) and $A\in U(n)$ we have that
\begin{equation}\label{si1}
\|Av\|_1=\<Av,\si(Av)\>=\<v,A^*(\si(Av))\>\le\|A^*(\si(Av))\|_1,
\end{equation}
where ``$\si$'' is the standard signum function defined on the support of $v\in \bc^n$ by
\[
\si(v)=\frac v{|v|},
\]
so that $v=\si(v)|v|$. Since (\ref{si1}) holds for all $v$ with $|v|\le 1$, we can safely define $\si(v)$ as zero on the complement of the support of $v$.  

Applying  (\ref{si1}) twice yields
\begin{equation}\label{si2}
\|Av\|_1\le\|A^*(\si(Av))\|_1\le\|A(\si(A^*(\si(Av))))\|_1:=\|A v'\|_1,
\end{equation}
with $v'=\si(A^*(\si(Av)))$. This leads to the following algorithm that with each step increases the $\ell^1$-norm of $A$ on consecutive vectors, and therefore, allows to search for the biunimodular vectors of $A$.
\vskip.2cm
{\bf 1. Initialization:} Take any vector $V_0\in\bt^n$.
\vskip.1cm
{\bf 2. Step:} $V_{j+1}=\si(A^*(\si(AV_j)))$ for $j=0,1,\dots$.
\vskip.2cm
\noindent
Clearly, (\ref{si2}) assures that 
\begin{equation}\label{hill}
\|AV_{j}\|_{1}\le\|AV_{j+1}\|_{1} \quad{\rm for} \quad j=0,1,\dots
\end{equation}
giving a way to stop the algorithm after reaching $J$ such that $\|AV_{J}\|_1=n$. Of course, for numerical algorithms we can only expect this to hold within a certain, predefined precision. Since we know that $\|AV_{j}\|_1\le n$ for all $j\in\bn$, we can set this precision to $\delta>0$ and stop the algorithm after reaching a $J$ such that
\begin{equation}\label{delta0}
\|AV_J\|_1>n-\delta.
\end{equation}

In this way we rediscover a phase retrieval method that has a pretty long history. To make it short, following Jaming \cite{Ja} we recall that as early as 1932, Pauli was asking whether the information on the moduli of a wave function and its Fourier transform allows to recover the phase of the wave function (i.e. Having $|\psi|$ and $|{\cal F} \psi|$ can we recover $\psi$?).  In practice, the phase retrieval problem is a serious obstacle for analysing complex valued signals, since only partial information is given on $|\psi|$ (see \cite{Can1, Can2} by Cand\`es {\it et al.} or \cite{Bau} and \cite{Fog} for recent developments on this notorious problem). Nevertheless, under original Pauli constrains, a method for recovering the phase of a signal has been provided in 1972  by Gerchberg and Saxton (see \cite{Ger}). 
Gerchberg-Saxton algorithm has several variants, and the one that is relevant to us takes the following form. Let $|\psi|=g$ and $|{\cal F}\psi|=h$. In order to recover $\psi$ define 
\[
P_{gh} f=g \,\si_1({\cal F}^* (h\,\si_1({\cal F} f)))),
\] 
where
\[
\si_1(f)=\begin{cases} \frac f{|f|} & {\rm on } \,\, \supp(f)\\
1  & {\rm off } \,\, \supp(f)

\end{cases}
\]
and starting with an $f_0$ such that $|f_0|=g$, run $f_{j}=P_{gh}^{j}(f_0)$ until it converges to a solution $\psi'$.\footnote{Unfortunately, $\psi'$ is not unique (it depends on the choice of $f_0$), so it is only a candidate for $\psi$ (see Subsection~\ref{secFou}).} This algorithm can be viewed as an alternating projection method for finding an intersection of two subsets of a Hilbert space, that was discovered by von Neumann in 1933 (\cite{vN}), in the case when the subsets are closed subspaces. This view is justified by writing $P_{gh}$ as $P_{G}\circ P_{H}$, where $P_Gf=g\,\si_1(f)$ and 
$P_Hf={\cal F}^* (h\,\si_1({\cal F} f)))$ are projections on certain subsets of a Hilbert space. 

The method of alternating projections is a topic by itself (see \cite{APM}), that is well covered in the case of convex sets (\cite{BauBor}) and compact sets with a non-empty intersection (\cite{Com}). In our setting we are looking for an intersection of two compact non-convex sets, namely $\bt^n$ and $A^*(\bt^n)$, with a guarantee that $\bt^n \cap A^*(\bt^n)\not=\emptyset$ provided by Theorem~\ref{IW}. 

Clearly, $\si_1:\bc^n\to\bt^n$ is a projection on the torus (in the sense that $\|v-\si_1(v)\|_2={\rm dist}(v,\bt^n)$), and $A^*\circ \si_1\circ A$ is a projection on $A^*(\bt^n)$. Thus, a successive application of $P'_A(v)= \si_1(A^*(\si_1(Av)))$ yields the standard alternating projection method.

 Although we use 
\[
P_A(v)= \si(A^*(\si(Av))), \quad v\in\bc^n, \,A\in U(n)
\]
instead of $P'_A$, we shall quickly confine our search to the range where both these operators agree. Thus, we shall refer to this method as ``alternating projection algorithm'' anyway.  

After providing this backdrop, we can take on understanding the workings of the algorithm. Recall that we start with an arbitrary vector $V_0\in \bt^n$ and successively apply $P_A$, by setting $V_j=P_A^jV_0$, until we reach a $J$ such that (\ref{delta0}) holds
with a certain precision $\delta>0$.

The price for the simplicity of this algorithm is its randomness, that has to be properly interpreted.\footnote{ Essentially, one has to run the algorithm with multiple starting vectors in $\bt^n$ to get satisfactory results (see Subsection~\ref{secHaar}). }

Since $v\in \bt^n$ is biunimodular for $A$ iff $\|Av\|_1=n$, we would be satisfied with finding $V_J$ such that  $\delta$ in  (\ref{delta0}) is close to zero. However,  there is no guarantee that for a given starting vector $V_0$ we will find $J\in\bn$ so that $\delta$ is small enough.  This is already apparent in the case $A=F_2=\frac1{\sqrt2}\begin{bmatrix}1 & 1\\1& -1\end{bmatrix}$ with $V_0=(1,1)$. Indeed, here $V_j=V_0$ for all $j\in\bn$, so $\|AV_j\|_1=\sqrt2\ll2$.  Therefore, in general, the algorithm has to be applied to multiple random starting vectors in $\bt^n$.

Another aspect of the randomness is due to the discontinuity of the signum function. Clearly, neither $P_A$ nor $P'_A$ is continuous. In theory, the vector 
$\mathbf{1}=(1,1,\dots,1)\in\bc^n$ is a fix point of  both $P_A$ and $P'_A$, when $A=F_n$. In practice, both algorithms may converge to more stable solutions when applied to the starting vector $\mathbf{1}$ in the Fourier case. Of course, similar problems may occur in the general case as well. In order to avoid the discontinuity, we observe that for every $A\in U(n)$ and  $v\in\bt^n$ such that $\|A v\|_1>n-\delta$ we have the following
\[
\|\mathbf{1}-|Av|\|_2^2=n-2\|Av\|_1+\|Av\|_2^2= 2(n-\|Av\|_1)<2\delta,
\]
so $|Av|\ge \frac12$ for $\delta\le\frac18$. Moreover, for such $\delta$ we get
\[
\|A^*(\si(Av))-v\|_2^2=\|\si(Av)-Av\|_2^2=\|\mathbf{1}-|Av|\|_2^2<{2\delta},
\]
so  $|A^*(\si(Av))|\ge \frac12$ as well. This allows to draw the following conclusions. 

If the stopping condition   $(\ref{delta0})$ is satisfied, then $\|\mathbf{1}-|AV_J|\|_2<\sqrt{2\delta}$. If the initial condition
\begin{equation}\label{delta1}
\|AV_0\|_1>n-\mbox{\small$\frac1{8}$}
\end{equation}
is satisfied, then for all $j\in\bn\cup\{0\}$ we have that

\begin{itemize}
\item  $V_j:=P_A^jV_0=(P'_A)^jV_0$,
 \item  $V_j\in\bt^n$, $|AV_j|\ge\frac12$ and $|A^*(\si(AV_j))|\ge\frac12$,
\end{itemize}
due to an easy induction argument. Moreover, assuming  (\ref{delta1}), it can be proved that 
\begin{itemize}
\item $\|AV_{j+1}\|_1 =\|AV_j\|_1$ iff $V_{j+1}=V_j$,
\item $\lim_{j\to\infty}\|V_{j+1}-V_{j}\|_\infty=0$.
\end{itemize}
Where the former follows from  (\ref{si1}) and  (\ref{si2}), while the proof of the latter is a bit lenghtly, so we include it in Appendix~B  containing a detailed study of the algorithm.

 Since the sequence $(\|AV_{j}\|_1)_{j\in\bn}$ is monotonic and bounded, it converges to a limit $L\le n$. In Appendix~B we show that under the initial condition (\ref{delta1}) the cluster points of $(V_{j})_{j\in\bn}$ belong to the set $S_A$ of fix points of $P_A$. This allows to assure the convergence of $V_j$, in the case when  $S_A$ is finite. Since biunimodular vectors of $A$ belong to $S_A$, we know that for some $A$ the set $S_A$ is uncountable. Thus, it it possible, that in some cases $V_{j}$ does not converge. 

This brings us to the main point of the analysis of the algorithm. If  the stopping condition  (\ref{delta0}) holds with $\delta\le\frac18$, then the initial condition (\ref{delta1}) holds with $V_0$ replaced by $V_J$. Thus, the stopping condition  (\ref{delta0}), with $\delta$ numerically near to zero, gives $V_J$ that is close to a fix point of $P_A$. If $L:=\lim_{j\to\infty}\|AV_{j}\|_1=n$, then $V_J$ is close to a biunimodular vector of $A$, however, it is not clear how close.  If $L<n$, then $V_J$ is close (in a similar fashion) to a fix point of $P_A$, that may be far away from biunimodular vectors. Moreover, it is impossible to establish numerically that $L=n$. Hence, we need to settle for {\it  $\delta$-near biunimodular} vectors given as
\[
 \bi_\delta(A) = \{ v \in \mathbb{T}^n~:~\| A v \|_1 > n-\delta \}
\] and stress again, that some of these vectors can be far away from biunimodular vectors, even if $\delta$ is nearly zero.

 In short, if the algorithm returns vectors that are ``biunimodular'' in practice, there is no guarantee that they are close to the theoretical biunimodular vectors.

Despite this uncertainty, it turns out, that finding  near biunimodular vectors for every unitary matrix is sufficient to conclude, that all such matrices have biunimodular vectors. In the following proposition we prove even more.
\begin{proposition}\label{prenal} Let $\al<1$. If for every $n\in\bn$ and for every $A\in U(n)$ there is $v\in\bt^n$ such that
\begin{equation}\label{enal}
\| A v \|_1 \ge n-n^\al,
\end{equation}
then every unitary matrix has a biunimodular vector.
\end{proposition}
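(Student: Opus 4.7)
My plan is to reduce the statement to showing that the supremum $\sup_{v\in\bt^n}\|Av\|_1$ equals $n$ for every $A\in U(n)$. By compactness of $\bt^n$ and continuity of $v\mapsto\|Av\|_1$, this supremum is attained, say at some $w^*\in\bt^n$; then $\|Aw^*\|_1=n$ forces $\|A\|_{\infty\to 1}=n$, and the ``conversely'' direction inside the proof of Theorem~\ref{char}(b) shows that $w^*$ is biunimodular for $A$. So the whole task is to convert the hypothesis, which only asserts a single defect bound of size $n^\al$, into the statement that the defect can be made arbitrarily small.

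The main idea is a direct-sum amplification. Given $A\in U(n)$ and $k\in\bn$, I would apply the hypothesis to the block-diagonal unitary
\[
 B_k = \underbrace{A\oplus A\oplus\cdots\oplus A}_{k\text{ times}}\in U(kn).
\]
The hypothesis produces $v_k\in\bt^{kn}$ with $\|B_k v_k\|_1\ge kn-(kn)^\al$. Split $v_k$ into $k$ consecutive blocks $v_k^{(1)},\ldots,v_k^{(k)}\in\bt^n$; since $B_k$ is block diagonal,
\[
 \|B_k v_k\|_1 \;=\; \sum_{j=1}^{k}\|A v_k^{(j)}\|_1.
\]
Combining this with the bound $\|Aw\|_1\le n$ for $w\in\bt^n$ (inequality~(\ref{string}) in the proof of Theorem~\ref{char}(b)) gives
\[
 \sum_{j=1}^{k}\bigl(n-\|A v_k^{(j)}\|_1\bigr)\;\le\;(kn)^\al\;=\;k^\al n^\al.
\]

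Since the $k$ nonnegative summands have total mass at most $k^\al n^\al$, at least one index $j_k$ satisfies
\[
 n-\|A v_k^{(j_k)}\|_1 \;\le\; k^{\al-1}n^\al.
\]
Because $\al<1$, the right-hand side tends to $0$ as $k\to\infty$. Setting $w_k := v_k^{(j_k)}\in\bt^n$, the sequence $(w_k)_{k\in\bn}$ lives in the compact set $\bt^n$ and has $\|Aw_k\|_1\to n$; any cluster point $w^*$ satisfies $\|Aw^*\|_1=n$ by continuity, and the argument recalled in the first paragraph shows $w^*$ is biunimodular for $A$.

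I do not anticipate a real obstacle here; the only point that needs care is using $\al<1$ to ensure that the averaged defect $k^{\al-1}n^\al$ vanishes with $k$ for a fixed $n$, and that the hypothesis is invoked for the inflated dimension $kn$ (not $n$), so the polynomial tolerance $(kn)^\al$ is divided by $k$ ``copies'' of $A$ rather than just reproducing the original bound.
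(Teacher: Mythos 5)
Your proof is correct and rests on exactly the same idea as the paper's: amplify by forming a block-diagonal unitary with $k$ copies of the matrix, use that the $\infty\to 1$ norm of the block sum splits over blocks while the tolerance $(kn)^\al$ grows sublinearly in $k$, and invoke the equality case of $\|A\|_{\infty\to 1}\le n$ together with compactness. The paper phrases it contrapositively (a matrix without biunimodular vectors has norm $m-\e$, and the blocks force $\e/m\le n^{\al-1}\to 0$), whereas you argue directly via pigeonhole, but this is only a cosmetic reorganization.
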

\begin{prf} This follows from considering diagonal block matrices. If $B\in U(m)$ does not have biunimodular vectors, then    $\|B\|_{\infty\to1}=m-\e$ with some $\e>0$. Therefore, for every $N\in\bn$, the block matrix $A_N$ with $N$ copies of $B$ on the diagonal
\[
A_N:=\left[ \: 
\begin{array}{*{4}{c}}
\cline{1-1}
\multicolumn{1}{|c|}{B}&  &&\\
\cline{1-2}
& \multicolumn{1}{|c|}{B} &   &  
\\
\cline{2-2}
& &\ddots &
 \\
\cline{4-4}
& &  & \multicolumn{1}{|c|}{B}
  \\
\cline{4-4}
\end{array}
\,\,\right]
\]
belonging to $U(n)$ with $n=Nm$, has the norm   $\|A_N\|_{\infty\to1}=N(m-\e)$. Thus, if (\ref{enal}) holds for $A_N$, then
we get that 
\[
n-n^\al\le\|A_Nv\|_1\le n-n\mbox{\small$\frac\e{m}$},
\]
leading to a contradiction $\frac\e m\le n^{\al-1}$, since $n^{\al-1}\to 0$ as $N\to\infty$.
\end{prf}

In the next subsection we check the performance of the alternating projection algorithm for unitary matrices up to dimension 100. The final subsection contains results of the application of this algorithm to the Fourier case.

\subsection{Effectiveness of the algorithm}\label{secHaar}
 
In order to measure the performance of the alternating projection algorithm we
recall, that for $\delta>0$ and $A \in U(n)$ we have defined the set
\[
 \bi_\delta(A) = \{ v \in \mathbb{T}^n~:~\| A v \|_1 > n-\delta \}
\] of $\delta$-near biunimodular vectors of $A$. For a given unitary matrix $A$ and a prescribed parameter $\delta$ the algorithm is supposed to return a member of $ \bi_\delta(A)$.
The effectiveness of the algorithm can be measured by considering the set $\mathcal{B}_{n,\delta}$, that is, the set of these matrices in $U(n)$, for which the algorithm returns the desired outcome.

The aim of this subsection is to provide numerical evidence that the Haar measure of $\mathcal{B}_{n,\delta}$ (that we denote by  $\mu(\mathcal{B}_{n,\delta})$)  is very close to one. 
For this purpose we proceeded as follows:  For each dimension under consideration, we drew a fixed number $K$ of random unitary matrices with Haar measure used as underlying probability measure. To achieve this, we implemented the method described in \cite{Mezzadri}. We then ran the alternating projection algorithm on the matrix with randomly chosen starting vectors, in order to produce near biunimodular vectors. The search was terminated either if one such vector was found, or until a maximal number of starting points was exceeded. 
The parameters used in this procedure were as follows:
\begin{itemize}
\item $\delta = 10^{-10}$. 
 \item Dimensions: $n=3,5,10,25,50,75,100$.
\item Number of matrices: $K = 10^4$;
\item Maximal number of starting points: $M=10^3$;
\item Maximal number of iterations: $L = 10^4$.
\end{itemize}

We implemented the described algorithms in MATLAB, and ran them on a stand-alone personal computer\footnote{The CPU of the machine was a 6 core AMD FX 6100 processor with 8 MB level 1 cache, running at 3.3. GHz, together with 8 GB RAM. The operating system was SUSE linux 12.2 for 64-bit PCs.}. 

The results of our numerical experiment are documented in Table \ref{tab:results_numsearch}. Two facts seem particularly striking to us: First of all, the algorithm found a $\delta$-near biunimodular vector {\em for each of the $10^4$ random matrices in each dimension considered}. This translates to an estimate of $\mu(\mathcal{B}_{n,\delta})$, which holds at least with very high likelihood: Our numerical experiment amounts to performing a Bernoulli experiment based on repeated independent samples of a Haar-distributed random matrix $A$ in $U(n)$, and a positive outcome of the experiment (a near biunimodular vector being found) implies $A \in \mathcal{B}_{n,\delta}$.
Thus the probability of a positive outcome for a single event is given by $p \le \mu(\mathcal{B}_{n,\delta})$. Assuming that $\mu(\mathcal{B}_{n,\delta}) \le 1-10^{-3} = 0.999$, the probability of obtaining a positive result  $10^4$ times in a row will thus be $\le 0.999^{10^4} \approx 4.5173 \cdot 10^{-5}$. In other words, our experiments provide very convincing evidence that $\mu(\mathcal{B}_{n,\delta})>0.999$. 

Another striking phenomenon is illustrated by the $3$rd and $4$th column of Table \ref{tab:results_numsearch}, highlighting the effectiveness of the search algorithm. 
For small to medium dimensions and in the average case, the algorithm needs only very few starting points to find a near biunimodular vector, but even the maximal number of such points stays well away from the maximal number of $10^3$ that was allowed. As the dimension increases, several effects conspire to increase the algorithm complexity: The rate of convergence in the alternating projection algorithm slows down, as witnessed by the increasing percentage of cases where the maximal number of iterations is reached before the $(n-\delta)$-threshold is passed (not shown in the table). Also, one may expect that the chances of randomly picking starting points for which the alternating projection algorithm converges sufficiently quickly also diminish with increasing dimension. Both effects increase the number of starting points needed in the search. In addition, the costs of applying the matrix-by-vector multiplication also can be expected to contribute to a nonlinear increase of the computational load. It seems 
that the overall 
result is an exponential increase of runtime: A log-linear fit for the runtimes for $n \ge 10$ revealed a behaviour like $O(e^{0.05n})$. 

\begin{table}
\centering
 \begin{tabular}{|l|c|c|c|l|} \hline 
 Dimension & Matrices in $\mathcal{B}_{n,\delta}$ &  starting points &  starting points & total runtime \\
            & & (average) & (maximal) & (rounded) \\ \hline 
          3 & $10^4$ & $1.0663$ & $5$ &  $1$ min $38$ sec \\ \hline
          5 & $10^4$ & $1.1719$ & $7$ & $4$ min $20$ sec \\ \hline
         10 & $10^4$ & $1.3472$ & $7$ &  $9$ min $59$ sec \\ \hline
         25 & $10^4$ & $1.6521$ & $10$ & $28$ min $51$ sec \\ \hline
         50 & $10^4$ & $2.4106$ & $17$ &  $1$ h $34$ min \\ \hline
         75 & $10^4$ & $4.3361$ & $34$ &  $5$ h $6$ min \\ \hline 
        100 & $10^4$ &   $9.4730$  & $89$ & $16$ h $40$ min \\ \hline 
 \end{tabular}
\caption{Results of the numerical experiment}
\label{tab:results_numsearch}
\end{table}

\subsection{Biunimodular vectors for Fourier matrices}\label{secFou}
The search for the biunimodular vectors for $F_n$ via the alternating projection algorithm is the simplest realization of the phase retrieval process under trivial Pauli constraints $|\psi|\equiv |{\cal F}\psi|\equiv 1$ in the discrete Fourier case ${\cal F}=F_n$, $\psi\in\bc^n$.

When looking for elements of $\bi(F_n)$ for a Fourier matrix $F_n$, it is advantageous to take into account various operations that preserve this set: 
\begin{itemize}
 \item Circular shifts by $k = 0,1,\ldots,n-1$. I.e., if $(u_0,\ldots,u_{n-1}) \in \bi(F_n)$, then the vector \\
 $(u_{k},u_{k+1},\ldots,u_{n-1},u_0,\ldots,u_{k-1})$ is biunimodular. 
 \item Modulation by $k = 0,1,\ldots,n-1$: If $(u_0,\ldots,u_{n-1})\in \bi(F_n)$, then the vector  \\
 $(u_0, u_1 \exp(2 \pi i k/n),\ldots, u_{n-1} \exp(2 \pi i k(n-1)/n))$ is biunimodular.
 \item Dilation by $k =0,1,\ldots,n-1$ coprime with $n$: If $(u_0,\ldots,u_{n-1})\in \bi(F_n)$, then
 $(y_0,\ldots,y_{n-1})$ with $y_j = u_{jk \mod n}$ is biunimodular.
 \item Conjugation: If $(u_0,\ldots,u_{n-1})\in \bi(F_n)$, then its conjugate $(\overline{u}_0,\ldots,\overline{u}_{n-1})\in \bi(F_n)$. 
 \item Fourier transform: If $u \in \bi(F_n)$, then $F_n u$ is biunimodular as well.  
\end{itemize}
These operations induce the action of a finite group $G_n$ of size $4 n^2 \varphi(n)$ on $\bi(F_n)$, where $\varphi(n)$ is Euler's totient function. 

We employed the alternating projection method to produce a list of near biunimodular vectors $v \in \bi_\delta(F_n)$, for $\delta = 10^{-7}$. More precisely, we computed a list of orbit representatives of such vectors, by repeatedly running the alternating projection algorithm with random starting points. For all dimensions, we used $10^4$ starting vectors, and at most 30 000 iterations before starting with a new random vector. Whenever the algorithm terminated in a $\delta$-near biunimodular vector, the result was appended to the list, provided its $\ell^2$-distance to all orbits already on the list exceeded the threshold $\tau = 10^{-5}$.  

There is a certain degree of arbitrariness in our choices of parameters. Regarding the choice of $\tau$ and $\delta$, recall that we currently have no means of predicting how close a $\delta$-near biunimodular vector is to an actual biunimodular vector (if at all). Also, an a priori choice of the threshold $\tau$ would have to be based on the knowledge of the local maxima of the mapping $v \mapsto \| A v \|_1$ on the torus. The above choice of $\tau$ was motivated by the observation that the orbits of known biunimodular vectors in dimensions up to 13 had a minimal $\ell^2$-distance $\ge 10^{-2}$ and $10^{-5}$ is safely below this value. 

The procedure to compute near biunimodular vectors was applied to all square-free numbers between 2 and 15. In the following presentation of numerical results, we say that an orbit found by our algorithm is close to one from the literature, if the minimal distance (measured in $\| \cdot \|_2$) between them is $< 10^{-5}$. We observed that each time an orbit found by alternating projections was close to an orbit from the literature, the orbit cardinalities matched as well.

\begin{itemize}
 \item $n=2$: The algorithm precisely found the one orbit consisting of the two vectors $(1,\pm i)$.
 \item $n=3$: The algorithm found one orbit of length 6, which was close to the orbit of the gaussian vector $(1,\om^2,1)$, where $\om=\exp(2 \pi i/3)$.
 \item $n=5$: The algorithm found two orbits, each of length 10. Each orbit was close to one of the orbits represented by the gaussian vectors
$(1,\om,\om^4,\om^4,\om)$ and $(1,\om^2,\om^3,\om^3,\om^2)$, where $\om =
\exp(2 \pi i/5)$.
 \item $n=6$: The algorithm found two orbits, one of length 12 and one of length 36. The orbit of length 12 was close to the gaussian vector,
the orbit of length 36 was close to the vector described
in \cite[formula (3.20)]{Haa}.
 \item $n=7$: The algorithm found three orbits, with lengths $42,196$ and $294$, yielding 532 solutions in total. The orbit of length $42$ is
close to the gaussian solution, whereas the orbit of
length 196 is close to the Bj\"orck sequence $(1,1,1,e^{i \theta},1,e^{i \theta},e^{i \theta})$ mentioned in the introduction. The orbit of
length 294 was close to the solution given in \cite[formula
(3.24)]{Haa}.
 \item $n=10$: Here we obtained 10 orbits, with orbit lengths 20 (2 orbits), 200 (3 orbits), 400 (4 orbits), and 800 (1 orbit), resulting in a
total of 3040 solutions. For $n=10$, the literature does
not provide a complete list of solutions.
 \item $n=11$: Here 12 orbits were found, with lengths 110 (1), 484 (1), 1210 (7), 2420 (2)  and 4840 (1), resulting in a total number of
18744 vectors. Among these were the gaussians, all contained
in the orbit of length 110. For $n=11$, the literature does not provide a complete list of solutions.
 \item $n = 13$: The algorithm found 20 orbits, with lengths 78 (2), 338 (2), 1014 (2), 1352 (2), 2028 (7), and 4056 (5), yielding 40040
solutions in total. Gabidulin and Shorin \cite{GS} exhibited 25
orbits for $n=13$, with 53222 solutions overall. Each of the orbits found via alternating projections was close to precisely one orbit
representative from the list presented by Gabidulin and Shorin.
The following orbits given in \cite{GS} were not found by the algorithm: An orbit with 1014 elements, corresponding to the case $e=6$ in
\cite[Subsection 7.4]{GS}, two orbits with 2028 solutions,
corresponding to lines one and seven in the table in \cite[Subsection 7.4]{GS}, and the first two orbits listed for the case $e=12$, each
having 4056 elements.
 \item $n=14$: The algorithm found 39 orbits, with lengths 84 (1), 196 (2), 392 (1), 588 (3), 784 (2), 1176 (12), 2352 (13), 4704 (5),
resulting in a total of 72408 vectors. Among these were the
gaussians, all contained in the orbit of length 84. For $n=14$, the literature does not provide a complete list of solutions.
 \item $n=15$: The algorithm produced 46 orbits, with lengths 900 (2), 1800 (21), 3600 (20), 7200 (3), resulting in a total of 133200 vectors.
The gaussian vectors are contained in two orbits of length
60, which were not found by the algorithm.
 For $n=15$, the literature does not provide a complete list of solutions.
\end{itemize}

\section* {Appendix A}

\newcounter{bla}
\renewcommand{\thebla}{A.\arabic{bla}}
\newtheorem{sobserv}[bla]{Observation}
\newtheorem{scor}[bla]{Corollary}

As we have already explained, Theorem~\ref{bimat} allows for an easy construction of all unitary matrices in the dyadic case $U(2^n)$. Recall, that every such matrix can be obtained from four matrices in $U(2^{n-1})$. For $n=1$ the procedure yields $U(2)$ via (\ref{u2f2}). Here, we show how the procedure works in the case $n=2$, to get a closed formula for $U(4)$.

For a collection of 16 unimodular parameters $a_j,b_j,c_j,z_j\in\bt$ with $j=0,1,2,3$ we define the corresponding four matrices in U(2) via  (\ref{u2f2}):
\begin{align}\label{macierze}
A&=\frac 12\begin{bmatrix}
a_1(1+a_0) & a_1a_3(1-a_0) \\
a_2(1-a_0) & a_2a_3(1+a_0)
\end{bmatrix}
&B=\frac 12\begin{bmatrix}
b_1(1+b_0) & b_1b_3(1-b_0) \\
b_2(1-b_0) & b_2b_3(1+b_0)
\end{bmatrix}\\
C&=\frac 12\begin{bmatrix}
c_1(1+c_0) & c_1c_3(1-c_0) \\
c_2(1-c_0) & c_2c_3(1+c_0)
\end{bmatrix}
&Z=\frac 12\begin{bmatrix}
z_1(1+z_0) & z_1z_3(1-z_0) \\
z_2(1-z_0) & z_2z_3(1+z_0)
\end{bmatrix}&.\nonumber
\end{align}
Theorem~\ref{bimat} asserts that every unitary matrix $U\in U(4)$ with entries $(U)_{k,l}=u_{k,l}$, where $k,l=1,2,3,4$, can be obtained as
\[U=\frac 12\begin{bmatrix}
A(I+Z) & A(I-Z)C \\
B(I-Z) & B(I+Z)C
\end{bmatrix}.
\]
This gives the following description of $U(4)$:

$$u_{11}={\scriptstyle \frac12} ({\scriptstyle \frac14} a_1 a_3 (1 - a_0) z_2 (1 - z_0) + 
      {\scriptstyle \frac12} a_1 (1 + a_0) (1 + {\scriptstyle \frac12} z_1 (1 + z_0))) $$
$$u_{21}={\scriptstyle \frac12} ({\scriptstyle \frac14} a_2 a_3 (1 + a_0) z_2 (1 - z_0) + 
      {\scriptstyle \frac12} a_2 (1 - a_0) (1 + {\scriptstyle \frac12} z_1 (1 + z_0))) $$
$$u_{31}={\scriptstyle \frac12} (-{\scriptstyle \frac14} b_1 b_3 (1 - b_0) z_2 (1 - z_0) + 
      {\scriptstyle \frac12} b_1 (1 + b_0) (1 - {\scriptstyle \frac12} z_1 (1 + z_0))) $$
$$u_{41}={\scriptstyle \frac12} (-{\scriptstyle \frac14} b_2 b_3 (1 + b_0) z_2 (1 - z_0) + 
      {\scriptstyle \frac12} b_2 (1 - b_0) (1 - {\scriptstyle \frac12} z_1 (1 + z_0))) $$

$$u_{12}=   {\scriptstyle \frac12} ({\scriptstyle \frac14}a_1 (1 + a_0) z_1 z_3 (1 - z_0) + 
      {\scriptstyle \frac12} a_1 a_3 (1 - a_0) (1 + {\scriptstyle \frac12} z_2 z_3 (1 + z_0)))$$
$$u_{22}=
   {\scriptstyle \frac12} ({\scriptstyle \frac14}a_2 (1 - a_0) z_1 z_3 (1 - z_0) + 
      {\scriptstyle \frac12} a_2 a_3 (1 + a_0) (1 + {\scriptstyle \frac12} z_2 z_3 (1 + z_0)))
$$
$$u_{32}= {\scriptstyle \frac12} (-{\scriptstyle \frac14} b_1 (1 + b_0) z_1 z_3 (1 - z_0) + 
      {\scriptstyle \frac12} b_1 b_3 (1 - b_0) (1 - {\scriptstyle \frac12} z_2 z_3 (1 + z_0)))
$$
$$u_{42}=   {\scriptstyle \frac12}  (-{\scriptstyle \frac14} b_2 (1 - b_0) z_1 z_3 (1 - z_0) + 
      {\scriptstyle \frac12} b_2 b_3 (1 + b_0) (1 - {\scriptstyle \frac12} z_2 z_3 (1 + z_0)))
$$

$$u_{13}={\scriptstyle \frac12} ({\scriptstyle \frac12} c_1  (1 + c_0) (-{\scriptstyle \frac14}a_1 a_3 (1 - a_0) z_2 (1 - z_0) + 
      {\scriptstyle \frac12} a_1 (1 + a_0) (1 - {\scriptstyle \frac12} z_1 (1 + z_0))) +$$
$$ 
   {\scriptstyle \frac12} c_2 (1 - c_0) (-{\scriptstyle \frac14}a_1 (1 + a_0) z_1 z_3 (1 - z_0) + 
      {\scriptstyle \frac12} a_1 a_3 (1 - a_0) (1 - {\scriptstyle \frac12} z_2 z_3 (1 + z_0))))
$$
$$u_{23}={\scriptstyle \frac12} ({\scriptstyle \frac12} c_1 (1 + c_0) (-{\scriptstyle \frac14} a_2 a_3 (1 + a_0) z_2 (1 - z_0) + 
      {\scriptstyle \frac12} a_2 (1 - a_0) (1 - {\scriptstyle \frac12} z_1 (1 + z_0))) +$$
$$ 
   {\scriptstyle \frac12} c_2 (1 - c_0) (-{\scriptstyle \frac14}a_2 (1 - a_0) z_1 z_3 (1 - z_0) + 
      {\scriptstyle \frac12} a_2 a_3 (1 + a_0) (1 - {\scriptstyle \frac12} z_2 z_3 (1 + z_0))))
$$
$$u_{33}={\scriptstyle \frac12} ({\scriptstyle \frac12} c_1  (1 + c_0) ({\scriptstyle \frac14} b_1 b_3 (1 - b_0) z_2 (1 - z_0) + 
      {\scriptstyle \frac12} b_1 (1 + b_0) (1 + {\scriptstyle \frac12} z_1 (1 + z_0))) +$$
$$ 
   {\scriptstyle \frac12} c_2 (1 - c_0) ({\scriptstyle \frac14} b_1 (1 + b_0) z_1 z_3 (1 - z_0) + 
      {\scriptstyle \frac12} b_1 b_3 (1 - b_0) (1 + {\scriptstyle \frac12} z_2 z_3 (1 + z_0))))
$$
$$u_{43}={\scriptstyle \frac12} ({\scriptstyle \frac12} c_1  (1 + c_0) ({\scriptstyle \frac14} b_2 b_3 (1 + b_0) z_2 (1 - z_0) + 
      {\scriptstyle \frac12} b_2 (1 - b_0) (1 + {\scriptstyle \frac12} z_1 (1 + z_0))) +$$
$$ 
   {\scriptstyle \frac12} c_2 (1 - c_0) ({\scriptstyle \frac14} b_2 (1 - b_0) z_1 z_3 (1 - z_0) + 
      {\scriptstyle \frac12} b_2 b_3 (1 + b_0) (1 + {\scriptstyle \frac12} z_2 z_3 (1 + z_0))))
$$

$$u_{14}={\scriptstyle \frac12} ({\scriptstyle \frac12} c_1 c_3 (1 - c_0) (-{\scriptstyle \frac14}a_1 a_3 (1 - a_0) z_2 (1 - z_0) + 
      {\scriptstyle \frac12} a_1 (1 + a_0) (1 - {\scriptstyle \frac12} z_1 (1 + z_0))) +$$
$$ 
   {\scriptstyle \frac12} c_2 c_3 (1 + c_0) (-{\scriptstyle \frac14}a_1 (1 + a_0) z_1 z_3 (1 - z_0) + 
      {\scriptstyle \frac12} a_1 a_3 (1 - a_0) (1 - {\scriptstyle \frac12} z_2 z_3 (1 + z_0))))
$$
$$u_{24}={\scriptstyle \frac12} ({\scriptstyle \frac12} c_1 c_3 (1 - c_0) (-{\scriptstyle \frac14} a_2 a_3 (1 + a_0) z_2 (1 - z_0) + 
      {\scriptstyle \frac12} a_2 (1 - a_0) (1 - {\scriptstyle \frac12} z_1 (1 + z_0))) +$$
$$ 
   {\scriptstyle \frac12} c_2 c_3 (1 + c_0) (-{\scriptstyle \frac14}a_2 (1 - a_0) z_1 z_3 (1 - z_0) + 
      {\scriptstyle \frac12} a_2 a_3 (1 + a_0) (1 - {\scriptstyle \frac12} z_2 z_3 (1 + z_0))))
$$
$$u_{34}={\scriptstyle \frac12} ({\scriptstyle \frac12} c_1 c_3 (1 - c_0) ({\scriptstyle \frac14} b_1 b_3 (1 - b_0) z_2 (1 - z_0) + 
      {\scriptstyle \frac12} b_1 (1 + b_0) (1 + {\scriptstyle \frac12} z_1 (1 + z_0))) +$$
$$ 
   {\scriptstyle \frac12} c_2 c_3 (1 + c_0) ({\scriptstyle \frac14} b_1 (1 + b_0) z_1 z_3 (1 - z_0) + 
      {\scriptstyle \frac12} b_1 b_3 (1 - b_0) (1 + {\scriptstyle \frac12} z_2 z_3 (1 + z_0))))
$$
$$u_{44}={\scriptstyle \frac12} ({\scriptstyle \frac12} c_1 c_3 (1 - c_0) ({\scriptstyle \frac14} b_2 b_3 (1 + b_0) z_2 (1 - z_0) + 
      {\scriptstyle \frac12} b_2 (1 - b_0) (1 + {\scriptstyle \frac12} z_1 (1 + z_0))) +$$
$$ 
   {\scriptstyle \frac12} c_2 c_3 (1 + c_0) ({\scriptstyle \frac14} b_2 (1 - b_0) z_1 z_3 (1 - z_0) + 
      {\scriptstyle \frac12} b_2 b_3 (1 + b_0) (1 + {\scriptstyle \frac12} z_2 z_3 (1 + z_0))))
$$

\vskip.3cm
\noindent
with 16 parameters $a_j,b_j,c_j,z_j\in\bt$, where $j=0,1,2,3$.

By setting $u_{11}=1$ in the above formula for $U(4)$ we will find a description of $U(3)$, that is used  in Theorem~\ref{u3} to prove the existence of a biunimodular vector for every matrix in $U(3)$. This shall be executed as a series of observations. 

First, we set  $u_{11}=1$ in the above formula for $U(4)$.
\begin{sobserv}
$u_{11}:={\scriptstyle \frac12} ({\scriptstyle \frac14} a_1 a_3 (1 - a_0) z_2 (1 - z_0) + 
      {\scriptstyle \frac12} a_1 (1 + a_0) (1 + {\scriptstyle \frac12} z_1 (1 + z_0))) =1$ iff
$a_0=a_1=z_0=z_1=1$.
\end{sobserv}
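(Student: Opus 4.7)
The plan is to recognise $u_{11}$ as half the $(1,1)$ entry of the matrix $A(I+Z) = A + AZ$, which is where the formula comes from via Theorem~\ref{bimat}. Expanding $u_{11} = \tfrac12 A_{11} + \tfrac12 (AZ)_{11}$ reduces everything to the observation that $A, Z \in U(2)$, hence $AZ \in U(2)$, so both $A_{11}$ and $(AZ)_{11}$ are entries of unitary matrices and therefore have modulus at most $1$. Consequently $|u_{11}| \le 1$, and the equality $u_{11} = 1$ forces
\[
A_{11} = 1 \quad\text{and}\quad (AZ)_{11} = 1,
\]
since two complex numbers of modulus $\le 1$ that average to $1$ must both equal $1$. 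I would state this as a short general lemma about averages of unimodular-bounded numbers, or just invoke it in place.

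Next I would solve the scalar equation $A_{11} = \tfrac12 a_1(1+a_0) = 1$ under the constraint $|a_0|=|a_1|=1$. Writing $a_0 = e^{i\theta}$ gives $|1+a_0| = 2|\cos(\theta/2)|$, which equals $2$ only at $\theta = 0$; hence $a_0 = 1$, and then substituting back yields $a_1 = 1$. This is the elementary equality condition for the triangle inequality $|1+a_0| \le 2$.

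With $a_0 = a_1 = 1$ established, the matrix $A$ collapses to the diagonal matrix $\mathrm{diag}(1, a_2 a_3)$, so the second equation simplifies:
\[
(AZ)_{11} = A_{11} Z_{11} + A_{12} Z_{21} = Z_{11} = \tfrac12 z_1(1+z_0) = 1.
\]
The same elementary argument applied to $(z_0, z_1)$ then forces $z_0 = z_1 = 1$. The converse direction is a one-line check: substituting $a_0 = a_1 = z_0 = z_1 = 1$ kills the first summand through the factor $(1-a_0)(1-z_0)$, while the second summand becomes $\tfrac12 \cdot \tfrac12 \cdot 2 \cdot (1 + \tfrac12 \cdot 2) = 1$.

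The only step requiring any care is the passage from $u_{11} = 1$ to $A_{11} = (AZ)_{11} = 1$; I would make this explicit by noting that both summands lie in the closed unit disc, so their average lies in the unit disc with the boundary value $1$ attained only when both summands equal $1$ — a standard strict-convexity observation for $\overline{\mathbb{D}}$. Everything else is a direct calculation, and no further structure of $U(2)$ is needed beyond the boundedness of individual matrix entries.
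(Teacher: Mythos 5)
Your proof is correct and follows essentially the same route as the paper: the paper writes $u_{11}=\frac12\langle R, e+\overline{K}\rangle$ with $R$ the first row of $A$ and $K$ the first column of $Z$, which is exactly your decomposition $u_{11}=\frac12\left(A_{11}+(AZ)_{11}\right)$, and then forces equality in the bound $|u_{11}|\le 1$. The only cosmetic difference is that the paper extracts $R=K=e$ from the Cauchy--Schwarz equality conditions, whereas you reach the same conclusion by noting that entries of the unitary matrices $A$ and $AZ$ lie in the closed unit disc and then solving the scalar equations $\frac12 a_1(1+a_0)=1$ and $Z_{11}=1$ in succession.
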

\begin{prf}
Clearly, $u_{11}=\frac12\<R,e+\ov K\>$, where $e=(1,0)$,  $R=\frac12(a_1(1+a_0) , a_1a_3(1-a_0))$ denotes the the first row of $A$ from (\ref{macierze}) and $K=\frac12(z_1(1+z_0),
z_2(1-z_0))$ denotes the first collumn of $Z$ from therein. Thus, $\|R\|_2=\|K\|_2=1$.

 Since $u_{11}$  is an entry of a unitary matrix, we know that $|u_{11}|\le 1$. This inequality can be proved directly in the following way 
\[
|u_{11}|=\mbox{\small$\frac1{2}$}\left|\<R,e+\ov K\>\right|\le\mbox{\small$\frac1{2}$}\left(|\<R,e\>|+|\<R,\ov K\>|\right)\le \mbox{\small$\frac1{2}$}\left(\|R\|\|e\|+\|R\|\| K\|\right)=1.
\]
Therefore, $|u_{11}|=1$ means that $|\<R,e\>|=\|R\|\|e\|$, $|\<R,\ov K\>|=\|R\|\| K\|$ (so $R=\la e$, $K=\mu e$ with $\la,\mu\in\bt$) and $\left|\<R,e+\ov K\>\right|=|\<R,e\>|+|\<R,\ov K\>|$ (so $\mu=1$). Finally, $u_{11}=\frac12\<R,e+\ov K\>=1$ gives $\la=1$. Hence, $u_{11}=1$ iff $R=K=e$, that is, $a_0=a_1=z_0=z_1=1$.
\end{prf}

Then, we set $a_0=a_1=z_0=z_1=1$ in the formula for $U(4)$, to get a corresponding formula for $U(3)$ given by the simplified entries $u_{k,l}$ with $k,l=2,3,4$. By pulling out some factors from these entries we obtain the following matrix: $D_1 U_{(a,b,c,z)} D_2$, where $D_1$ is the diagonal matrix 
$D_1=\diag(a_2a_3,b_1b_3,b_2b_3)$, $D_2=\diag(1,c_2,c_3)$ and

\[
U_{(a,b,c,z)}=\begin{bmatrix}
\frac{(1+a)}2 & \frac{(1-a)(1-c)}4 &  \frac{(1-a)(1+c)}4\\
&&&\\
\frac{(1-a)(1-b)}4 & z\frac{(1+b)(1+c)}4+\frac{(1+a)(1-b)(1-c)}8 &  z\frac{(1+b)(1-c)}4+\frac{(1+a)(1-b)(1+c)}8\\
&&&\\
\frac{(1-a)(1+b)}4 & z\frac{(1-b)(1+c)}4+\frac{(1+a)(1+b)(1-c)}8 &  z\frac{(1-b)(1-c)}4+\frac{(1+a)(1+b)(1+c)}8
\end{bmatrix},
\]
with $a=z_2z_3$, $b=b_4$, $c=c_4$ and $z=c_1\ov {c_2  b_3}$. This proves the following description of $U(3)$
\begin{sobserv}\label{obsa2}
\[
U(3)=\{\diag(\la_1,\la_2,\la_3)U_{(a,b,c,z)}\diag(1,\la_4,\la_5): \, a,b,c,z,\la_j\in\bt, j=1,\dots,5\}.
\]
\end{sobserv}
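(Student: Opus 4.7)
The plan is to reduce the claim to the $16$-parameter formula for $U(4)$ displayed above by imposing the constraint that the top-left entry equals $1$. Given an arbitrary $M\in U(3)$, I would first embed it as
\[
\tilde M=\begin{bmatrix} 1 & 0 \\ 0 & M\end{bmatrix}\in U(4),
\]
and invoke Theorem~\ref{bimat} to produce unimodular parameters $a_j,b_j,c_j,z_j\in\bt$, $j=0,1,2,3$, so that $\tilde M$ is represented by the displayed $U(4)$ formula. Since $\tilde M_{11}=1$, the preceding observation forces $a_0=a_1=z_0=z_1=1$. Substituting these four equalities into the expressions for $u_{12},u_{13},u_{14}$ makes each of them vanish (every term carries a factor $1-a_0$ or $1-z_0$); unitarity of $\tilde M$ then forces the first column to be $(1,0,0,0)^T$ as well, so the lower-right $3\times 3$ block of the formula is exactly $M$.

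The core of the argument is then a direct inspection of the nine simplified entries $u_{k,l}$, $k,l\in\{2,3,4\}$. I would pull out the unimodular factors $a_2a_3$, $b_1b_3$, $b_2b_3$ from rows $2,3,4$ and $1$, $c_2$, $c_3$ from columns $2,3,4$, and verify that every residue depends only on the four unimodular quantities
\[
a:=z_2z_3,\quad b:=b_0,\quad c:=c_0,\quad z:=c_1\overline{c_2 b_3},
\]
and matches exactly the corresponding slot of $U_{(a,b,c,z)}$. Setting $\la_1=a_2a_3$, $\la_2=b_1b_3$, $\la_3=b_2b_3$, $\la_4=c_2$, $\la_5=c_3$ then yields the claimed decomposition of $M$, establishing the inclusion $\subseteq$ in the statement.

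The reverse inclusion is essentially automatic: feeding $a_0=a_1=z_0=z_1=1$ back into the $U(4)$ formula produces a unitary matrix of the block form $\diag(1,M)$ by the same vanishing just observed, so $M=\diag(\la_1,\la_2,\la_3)\,U_{(a,b,c,z)}\,\diag(1,\la_4,\la_5)$ is itself unitary for any choice of the five $\la$'s and the four parameters $a,b,c,z$ on the torus. I expect the main obstacle to be the bookkeeping in the inspection step: one must carefully match each of the nine residues against the corresponding entry of $U_{(a,b,c,z)}$ and, in particular, check that the cross terms in the diagonal entries conspire to produce the parameter $z$ precisely as $c_1\overline{c_2 b_3}$ rather than some other monomial in the row/column parameters.
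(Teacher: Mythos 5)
Your proposal is correct and follows essentially the same route as the paper: embed $M$ as $\diag(1,M)\in U(4)$, apply the sixteen-parameter $U(4)$ formula, use Observation~A.1 to force $a_0=a_1=z_0=z_1=1$, and read off the lower-right $3\times 3$ block after factoring out the row and column phases (your identifications $b=b_0$, $c=c_0$ are the intended reading of the paper's typographical $b_4$, $c_4$). The only nitpick is that in $u_{13},u_{14}$ the term $\frac12 a_1(1+a_0)\bigl(1-\frac12 z_1(1+z_0)\bigr)$ vanishes because $z_0=z_1=1$ kills the last factor, not because it carries a factor $1-a_0$ or $1-z_0$; the conclusion is unaffected.
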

By writing $a=e^{i2\al}$, $b=e^{i2\be}$, $c=e^{i2\ga}$ we get that $U(a,b,c,z)=D_1'T_{(\al,\be,\ga,z)}D_2'$,
where
$D_1'=\diag(e^{i\al},e^{i(\al+\be)},-ie^{i(\al+\be)})$, $D_2'=\diag(1,e^{i\ga},-ie^{i\ga})$ and 
\[
T_{(\al,\be,\ga,z)}=\begin{bmatrix} \cos\al &-\sin\al\sin\ga&\sin\al\cos\ga\\
-\sin\al\sin\be&z\cos\be\cos\ga-\cos\al\sin\be\sin\ga&z\cos\be\sin\ga+\cos\al\sin\be\cos\ga\\
\sin\al\cos\be&z\sin\be\cos\ga+\cos\al\cos\be\sin\ga&z\sin\be\sin\ga-\cos\al\cos\be\cos\ga
\end{bmatrix},
\]
providing a parallel description of $U(3)$:

\begin{sobserv}\label{obsa3}
\[
U(3)=\{\diag(\la_1,\la_2,\la_3)T_{(\al,\be,\ga,z)}\diag(1,\la_4,\la_5): \, \al,\be,\ga\in[0,2\pi], z,\la_j\in\bt, j=1,\dots,5\}.
\]
\end{sobserv}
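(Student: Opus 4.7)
The strategy is to deduce Observation~\ref{obsa3} directly from Observation~\ref{obsa2} by a trigonometric substitution. Writing $a=e^{2i\al}$, $b=e^{2i\be}$, $c=e^{2i\ga}$, I will exhibit a factorization $U_{(a,b,c,z)} = D_1'\,T_{(\al,\be,\ga,z')}\,D_2'$ with diagonal unitary matrices $D_1'$, $D_2'$ satisfying $(D_2')_{1,1}=1$ and with $z'\in\bt$; the diagonal factors $D_1'$ and $D_2'$ are then absorbed into $\diag(\la_1,\la_2,\la_3)$ and $\diag(1,\la_4,\la_5)$ respectively. Since the map $\al \mapsto e^{2i\al}$ from $[0,2\pi]$ onto $\bt$ is surjective, this conversion between the two parameterizations identifies the right-hand sides of Observations~\ref{obsa2} and~\ref{obsa3} as sets.

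The whole computation rests on the half-angle identities $\tfrac{1+e^{2i\theta}}{2}=e^{i\theta}\cos\theta$ and $\tfrac{1-e^{2i\theta}}{2}=-ie^{i\theta}\sin\theta$. Applied to the first column of $U_{(a,b,c,z)}$ they give $(e^{i\al}\cos\al,\,-e^{i(\al+\be)}\sin\al\sin\be,\,-ie^{i(\al+\be)}\sin\al\cos\be)^{\top}$, which factors as $\diag(e^{i\al},e^{i(\al+\be)},-ie^{i(\al+\be)})$ times the first column of $T_{(\al,\be,\ga,z)}$; this reads off $D_1'$. An analogous calculation on the first row of $U_{(a,b,c,z)}$ reads off $D_2'=\diag(1,e^{i\ga},-ie^{i\ga})$. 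For each of the four entries $(k,l)\in\{2,3\}\times\{2,3\}$ of the central block of $U_{(a,b,c,z)}$, expansion via the half-angle identities shows that the entry splits as the sum of a $z$-term carrying phase $e^{i(\be+\ga)}$ and a non-$z$-term carrying phase $e^{i(\al+\be+\ga)}$. Dividing by $(D_1')_{k,k}(D_2')_{l,l}$, the non-$z$-term matches the corresponding entry of $T_{(\al,\be,\ga,z)}$ exactly, while the $z$-term picks up an excess factor of $e^{-i\al}$; this is absorbed harmlessly by the reparametrization $z' := ze^{-i\al}\in\bt$, which still ranges over $\bt$ as $z$ does.

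Combining the two steps, every matrix of the form $\diag(\la_1,\la_2,\la_3)\,U_{(a,b,c,z)}\,\diag(1,\la_4,\la_5)$ from Observation~\ref{obsa2} can be rewritten as $\diag(\la_1 e^{i\al},\la_2 e^{i(\al+\be)},-i\la_3 e^{i(\al+\be)})\,T_{(\al,\be,\ga,z')}\,\diag(1,\la_4 e^{i\ga},-i\la_5 e^{i\ga})$, which is again of the prescribed form and, by Observation~\ref{obsa2}, covers all of $U(3)$. For the reverse inclusion, one needs to know that $T_{(\al,\be,\ga,z)}\in U(3)$ for every choice of parameters; this follows a posteriori from the same factorization together with the unitarity of $U_{(a,b,c,z)}$, $D_1'$, and $D_2'$, or alternatively from a direct (routine) check that the rows of $T_{(\al,\be,\ga,z)}$ are orthonormal. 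The only real work is the careful bookkeeping across the nine entries; the one conceptually delicate point is the phase shift $z\mapsto ze^{-i\al}$, which is invisible at the level of the set description but is needed for the matrix identity to hold on the nose.
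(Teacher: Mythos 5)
Your proposal is correct and follows essentially the same route as the paper: substitute $a=e^{2i\al}$, $b=e^{2i\be}$, $c=e^{2i\ga}$ into $U_{(a,b,c,z)}$ via the half-angle identities, factor out $D_1'=\diag(e^{i\al},e^{i(\al+\be)},-ie^{i(\al+\be)})$ and $D_2'=\diag(1,e^{i\ga},-ie^{i\ga})$, and absorb these into the outer diagonal factors of Observation~\ref{obsa2}. Your remark about the reparametrization $z\mapsto ze^{-i\al}$ is a genuine (and correct) refinement: the paper writes $U_{(a,b,c,z)}=D_1'T_{(\al,\be,\ga,z)}D_2'$ with the same $z$, whereas the exact identity requires $T_{(\al,\be,\ga,ze^{-i\al})}$ --- harmless for the set equality, as you note, but your bookkeeping is the more accurate one.
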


\section*{Appendix B}\label{apd}
\newcounter{dla}
\renewcommand{\thedla}{B.\arabic{dla}}
\newtheorem{dproposition}[dla]{Proposition}
\newtheorem{dlemma}[dla]{Lemma}
\newtheorem{dcorollary}[dla]{Corollary}

In order to conduct a more detailed study of the alternating projection algorithm we begin by proving the following
\begin{dproposition}\label{propalg} Let $A\in U(n)$, $v\in\bt^n$ and $P_A(v)= \si(A^*(\si(Av)))$. Then, the condition $\|A v\|_1>n-\delta$ with $0<\delta\le\frac1{8}$ implies that
\begin{enumerate}
 \item[(a)]  $\|\mathbf{1}-|Av|\|_2<\sqrt{2\delta}$,
 \item[(b)] $|Av|\ge\frac12$ and  $|A^*(\si(Av))|\ge \frac12$,
\item[(c)] $\|AP_Av\|_1 =\|Av\|_1$ iff $P_Av=v$,
\item[(d)] $\|P_Av-v\|_\infty\le 2 \sqrt n (\|AP_Av\|_1 -\|Av\|_1)^\frac12$.
\end{enumerate}

\end{dproposition}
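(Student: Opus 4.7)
The plan is to treat the four assertions in order, with (a)--(c) as short applications of unitarity and the inequalities already established in (\ref{si1}) and (\ref{si2}), and (d) as a pointwise computation relating the $\ell^1$-defect to the squared distance $|P_A v - v|^2$ coordinate by coordinate.

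For (a), unitarity of $A$ together with $v \in \bt^n$ gives $\|Av\|_2^2 = n$ and hence $\||Av|\|_2^2 = n$. Expanding
\[
\|\mathbf{1} - |Av|\|_2^2 \;=\; n - 2\langle \mathbf{1}, |Av|\rangle + n \;=\; 2n - 2\|Av\|_1 \;<\; 2\delta
\]
is the claim. For (b), each coordinate of $|Av|$ differs from $1$ by at most $\||Av|-\mathbf{1}\|_2 < \sqrt{2\delta} \le 1/2$, giving $|Av| \ge 1/2$. For the second half, inequality (\ref{si1}) yields $\|A^*\si(Av)\|_1 \ge \|Av\|_1 > n - \delta$, while $|Av| \ge 1/2 > 0$ makes $\si(Av)$ well-defined and unimodular; applying (a) to $A^*$ and $\si(Av) \in \bt^n$ in place of $A$ and $v$ gives $\|\mathbf{1} - |A^*\si(Av)|\|_2 < \sqrt{2\delta}$, so $|A^*\si(Av)| \ge 1/2$ follows by the same coordinatewise argument.

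For (c), set $w = A^*\si(Av)$ and recall from (\ref{si2}) the chain $\|Av\|_1 \le \|w\|_1 \le \|AP_Av\|_1$. Equality at the ends forces equality at the first step, which reads $\langle v, w\rangle = \|w\|_1$. Since $|v_k|=1$, this is only possible if $v_k = \si(w_k) = P_A v_k$ for every coordinate where $w_k \ne 0$, and by (b) every coordinate of $w$ is nonzero. Hence $P_A v = v$; the converse is trivial.

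The main point is (d), which I reduce to a per-coordinate identity. Writing $w = A^*\si(Av)$ again and noting that $\|Av\|_1 = \langle Av, \si(Av)\rangle = \langle v, w\rangle$ is real, I expand
\[
|w_k| - \mathrm{Re}(v_k \overline{w_k}) \;=\; |w_k|\bigl(1 - \mathrm{Re}(v_k \overline{\si(w_k)})\bigr) \;=\; \tfrac{1}{2}\,|w_k|\,|v_k - \si(w_k)|^2,
\]
where the second equality uses $|v_k - \si(w_k)|^2 = 2 - 2\mathrm{Re}(v_k\overline{\si(w_k)})$ since both sides have modulus one. Summing over $k$, inserting the lower bound $|w_k| \ge 1/2$ from (b), and finally using $\|w\|_1 \le \|AP_Av\|_1$ from (\ref{si2}) gives
\[
\|AP_Av\|_1 - \|Av\|_1 \;\ge\; \|w\|_1 - \|Av\|_1 \;\ge\; \tfrac{1}{4}\|P_A v - v\|_2^2 \;\ge\; \tfrac{1}{4}\|P_A v - v\|_\infty^2,
\]
whence $\|P_A v - v\|_\infty \le 2(\|AP_A v\|_1 - \|Av\|_1)^{1/2}$, which is even sharper than the stated bound. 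The only obstacle I anticipate is keeping the algebra in the per-coordinate identity clean; once the factor $\tfrac{1}{2}|w_k|\,|v_k-\si(w_k)|^2$ is visible, the rest is bookkeeping supported entirely by (a)--(b) and the two inequalities from Subsection \ref{sect:algo}.
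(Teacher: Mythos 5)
Your proof is correct. Parts (a)--(c) follow the paper's own argument essentially verbatim: the expansion $\|\mathbf{1}-|Av|\|_2^2=2(n-\|Av\|_1)$, the observation that a coordinate of $|Av|$ below $\tfrac12$ would force $\|\mathbf{1}-|Av|\|_2^2\ge\tfrac14>2\delta$, and the equality analysis in the chain $\|Av\|_1=\langle v,A^*\si(Av)\rangle\le\|A^*\si(Av)\|_1\le\|AP_Av\|_1$ from (\ref{si1}) and (\ref{si2}). Your route to the second half of (b) --- applying (a) to the pair $(A^*,\si(Av))$ after noting $\|A^*\si(Av)\|_1\ge\|Av\|_1>n-\delta$ --- is a harmless variation on the paper's estimate $\|A^*\si(Av)-v\|_2=\|\mathbf{1}-|Av|\|_2<\sqrt{2\delta}$.

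Part (d) is where you genuinely depart from the paper, and your argument is both shorter and sharper. The paper proves (d) through three auxiliary lemmas (Lemmas~\ref{si5}, \ref{si7} and \ref{si8}): a scalar estimate $||a|-a|\le\sqrt{\e}$ under the constraints $a+b=1-\e$, $|a|+|b|=1$, the ``random walk'' bound $\||u|-u\|_\infty^2\le\|u\|_1(\|u\|_1-\langle u,\mathbf{1}\rangle)$, and finally $\|\si(w)-v\|_\infty\le c\sqrt{\|w\|_1(\|w\|_1-\langle v,w\rangle)}$ for $|w|\ge 1/c$; the factor $\sqrt n$ in the statement enters there through the crude bound $\|w\|_1\le n$. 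You instead use the exact coordinatewise identity $|w_k|-{\rm Re}(v_k\overline{w_k})=\tfrac12|w_k|\,|v_k-\si(w_k)|^2$ (legitimate since $w_k\ne0$ by (b)), sum over $k$ using that $\langle v,w\rangle=\|Av\|_1$ is real, and insert $|w_k|\ge\tfrac12$ to obtain $\|AP_Av\|_1-\|Av\|_1\ge\|w\|_1-\langle v,w\rangle\ge\tfrac14\|P_Av-v\|_2^2$, exactly as controlled by (\ref{si4}). This yields the dimension-free bound $\|P_Av-v\|_\infty\le\|P_Av-v\|_2\le 2(\|AP_Av\|_1-\|Av\|_1)^{1/2}$, which implies the stated one and moreover controls the full $\ell^2$ distance rather than only the $\ell^\infty$ distance. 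What the paper's longer route buys is the standalone Lemma~\ref{si7}, which needs no lower bound on $|u|$; but for Proposition~\ref{propalg}, where $|w|\ge\tfrac12$ is already available from (b), your computation is preferable. Note that the square root on the right-hand side persists in your version, so the convergence difficulties discussed after Proposition~\ref{propalg2} are unaffected.
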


\begin{prf}

To see that $|Av|$ is  close to $\mathbf{1}=(1,1,\dots,1)$ we use $\|v\|_2=\sqrt{n}$ and observe that
\[
\|\mathbf{1}-|Av|\|_2^2=n-2\|Av\|_1+\|Av\|_2^2= 2(n-\|Av\|_1)<2\delta,
\]
proving (a). This already assures that $|Av|\ge\frac12$ for $\delta\le\frac18$ (otherwise
$\frac14\le\|\mathbf{1}-|Av|\|_2^2<2\delta$, forcing $\delta>\frac18$). 

Since  $|Av|>0$, we easily see that $\|\si(Av)-Av\|_2=\|\mathbf{1}-|Av|\|_2<\sqrt{2\delta}$. Thus,
\[
\|A^*(\si(Av))-v\|_2<\sqrt{2\delta}.
\]
Therefore, by the same token as before, $\delta\le\frac18$ implies that $|A^*(\si(Av))|\ge \frac12$.

To prove (c) we note that, by (\ref{si1}) and (\ref{si2}),
\[
\|Av\|_1=\<v,A^*(\si(Av))\>\le\|A^*(\si(Av))\|_1\le\|A(\si(A^*(\si(Av))))\|_1=\|A P_Av\|_1
\]
and, therefore, 
\begin{equation}\label{si4}
0\le \|A^*(\si(Av))\|_1-\<v,A^*(\si(Av))\>\le \|A P_Av\|_1-\|Av\|_1.
\end{equation}
Thus, $\|A P_Av\|_1=\|Av\|_1$  implies that $\|A^*(\si(Av))\|_1=\<v,A^*(\si(Av))$, what entails $v=\si(A^*(\si(Av)))=P_Av$, since
$v\in\bt^n$ and $|A^*(\si(Av))|>0$.

To show (d) we remark, that 
$|A^*(\si(Av))|\ge \mbox{\small$ \frac12$}$
 together with $\|A^*(\si(Av))\|_1\le n$ and  (\ref{si4}) allows to apply Lemma~\ref{si8} below, proving (d).
\end{prf}

To show Lemma~\ref{si8} we start with an elementary observation.
\begin{dlemma}\label{si5}
Let $a,b\in\bc$ and $0\le\e<1$. If $a+b=1-\e$ and $|a|+|b|=1$ then $||a|-a|\le\sqrt\e$.
\end{dlemma}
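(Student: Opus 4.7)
The plan is to reduce the claim to an elementary inequality in $r=|a|$. Writing $b=(1-\e)-a$ (possible since $a+b$ is real) and using $|b|=1-r$, I would expand
\[
(1-r)^2 \;=\; |b|^2 \;=\; (1-\e)^2 - 2(1-\e)\re(a) + r^2,
\]
and solve for $\re(a)$; the case $\e=0$ is trivial, so we may assume $0<\e<1$. A line of algebra then yields
\[
r-\re(a) \;=\; \frac{\e\,(2-\e-2r)}{2(1-\e)}.
\]
Combining this with the identity $\bigl||a|-a\bigr|^2=2r\bigl(r-\re(a)\bigr)$ (which follows from $\mathrm{Im}(a)^2=r^2-\re(a)^2$) produces the key formula
\[
\bigl||a|-a\bigr|^2 \;=\; \frac{\e\,r\,(2-\e-2r)}{1-\e},
\]
so the lemma collapses to the real inequality $r(2-\e-2r)\le 1-\e$.

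Next I would pin down the admissible range of $r$: since $1-\e=a+b$ must lie in the Minkowski sum of the circles of radii $r$ and $1-r$, we need $|2r-1|\le 1-\e$, i.e.\ $r\in[\e/2,\,1-\e/2]$. Substituting $s=r-\e/2\ge 0$ and $t=(1-\e/2)-r\ge 0$ (so $s+t=1-\e$, whence $\e=1-(s+t)$) converts $r(2-\e-2r)$ into $(s+\e/2)(2t)=2st+\e t$, and the target $r(2-\e-2r)\le 1-\e$ becomes $2st+\e t\le s+t$. Replacing $\e t=(1-s-t)t=t-st-t^2$ reduces this to $s(t-1)\le t^2$, which is immediate: $t\le 1-\e\le 1$ forces the left-hand side to be non-positive while the right-hand side is a square.

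The only mildly subtle point is the admissibility constraint $r\in[\e/2,1-\e/2]$: without it, the unconstrained quadratic $2r^2-(2-\e)r+(1-\e)$ actually dips below zero for $\e$ close to $1$, so this geometric restriction on $r$ is what ties things together. Once it is in place, the $(s,t)$-substitution makes the remaining verification essentially a one-line check, and the bound $\bigl||a|-a\bigr|\le\sqrt{\e}$ follows.
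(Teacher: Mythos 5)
Your proof is correct and follows essentially the same route as the paper's: both reduce the claim to the identity $\bigl||a|-a\bigr|^2=\epsilon|a|(2-\epsilon-2|a|)/(1-\epsilon)$ and both ultimately rely on the reverse-triangle-inequality constraint $|a|\ge\epsilon/2$. The only difference is in the last elementary step: the paper checks $r(2-\epsilon-2r)\le 1-\epsilon$ by splitting into the cases $r\ge\tfrac12$ and $r\le\tfrac12$, whereas you use the substitution $s=r-\epsilon/2$, $t=1-\epsilon/2-r$; both finishes are fine.
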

\begin{prf}
Since $|1-\e-a|=1-|a|$, we get that $(1-\e)2\re( a)=(1-\e)^2-1+2|a|$, so
\begin{equation}\label{si6}
||a|-a|^2=2|a|(|a|-\re(a))=\e|a|(2-\e-2\re(a))=\e|a|\left(1+\frac{1-2|a|}{1-\e}\right)\le\e,
\end{equation}
what is clear for $|a|\ge\frac12$, and follows from $\e\le2|a|$ in the case when $|a|\le \frac12$.
\end{prf}

The next lemma can be treated as a solution to a certain random walk problem on the complex plane.
\begin{dlemma}\label{si7}
Let $u\in\bc^n$. If $\<u,\mathbf{1}\>>0$, then $$\||u|-u\|^2_\infty\le \|u\|_1(\|u\|_1-\<u,\mathbf{1}\>).$$
\end{dlemma}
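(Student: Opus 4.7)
The plan is to fix a coordinate $k$ and reduce the multivariate bound to the scalar statement of Lemma~\ref{si5} by lumping the remaining coordinates into a single complex number. Set $a=u_k$ and $b=\sum_{j\neq k}u_j$; then $a+b=\<u,\mathbf{1}\>$ is real and strictly positive by hypothesis, and the triangle inequality gives $T:=|a|+|b|\le\sum_{j=1}^n|u_j|=\|u\|_1$.

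Assuming $T>0$ (the case $T=0$ forces $a=0$, which makes the bound trivial), I would rescale to match the setup of Lemma~\ref{si5}: the complex numbers $a/T$ and $b/T$ have moduli summing to $1$, and their sum is $1-\tilde\epsilon$ with $\tilde\epsilon=(T-(a+b))/T\in[0,1)$, where the strict inequality $\tilde\epsilon<1$ is precisely where the hypothesis $\<u,\mathbf{1}\>>0$ enters. Applying Lemma~\ref{si5} and multiplying through by $T^2$ yields
\[
 ||u_k|-u_k|^2\;=\;||a|-a|^2\;\le\;T\bigl(T-(a+b)\bigr).
\]

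To finish, I would compare the right-hand side with the target quantity: $T\le\|u\|_1$ implies $T-(a+b)\le\|u\|_1-\<u,\mathbf{1}\>$ after subtracting the common term $a+b$, with both sides nonnegative (the left by the triangle inequality, the right because $\<u,\mathbf{1}\>\le\|u\|_1$). Multiplying this by $T\le\|u\|_1$ gives the desired bound $\|u\|_1\bigl(\|u\|_1-\<u,\mathbf{1}\>\bigr)$; taking the maximum over $k$ concludes the argument. I do not foresee any serious obstacle: the only subtle point is verifying $\tilde\epsilon<1$ so that Lemma~\ref{si5} applies, after which the remaining estimate is an elementary comparison of nonnegative quantities.
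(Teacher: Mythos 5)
Your proof is correct, and it follows the paper's route in its essentials: the same lumping of the coordinates into $a=u_k$ and the sum of the rest, and the same reduction to Lemma~\ref{si5}. The one genuine difference is how you absorb the slack between $|a|+|b|$ and $\|u\|_1$. The paper normalizes $\|u\|_1=1$ up front, which leaves $|a|+|x|$ possibly strictly less than $1$, and then must repair this by a geometric argument: replacing $x$ with a larger-modulus $b$ satisfying $|a|+|b|=1$ whose sum with $a$ lands at a point $1-\e'$ with $\e'\le\e$ (the circle-intersection step). You instead normalize by $T=|a|+|b|$ itself, so Lemma~\ref{si5} applies verbatim to $a/T$ and $b/T$, and the slack is handled afterwards by the elementary monotonicity $T\bigl(T-(a+b)\bigr)\le\|u\|_1\bigl(\|u\|_1-\<u,\mathbf{1}\>\bigr)$, both factors being nonnegative. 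Your version buys a cleaner argument that sidesteps the fiddliest step of the paper's proof; the paper's version produces the marginally sharper intermediate bound with $T$ in place of $\|u\|_1$, which is not needed for the application.
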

\begin{prf}
Let $u=(u_1,u_2,\dots,u_n)\in\bc^n\setminus\{0\}$. Due to rescaling we can assume that $\|u\|_1=1$. Moreover, the problem is insensitive to permuting the entries of $u$, so it is enough to show that
$||u_1|-u_1|^2\le 1-\<u,\mathbf{1}\> $. The only hard part is to realize, that this estimate boils down to the previous lemma.

 Let us denote $ 1-\<u,\mathbf{1}\>$ by $\e$. Since  $0<\<u,\mathbf{1}\>\le\|u\|_1=1$, it is clear that $0\le\e<1$. Moreover, let us set $a:=u_1$ and $x:=\sum_{k=2}^nu_k$. Clearly, $a+x=1-\e$. This means that the circle with the center at $a$ and radius $|x|$ intersects the interval $[0,1]$ at the point $1-\e$. Since $|x|\le\sum_{k=2}^n|u_k|= 1- |a|$,  the circle with the center at $a$ and radius $1-|a|$ intersects the interval $[0,1]$ at some point $1-\e'$ with $0\le\e'\le\e$ (this holds despite the fact, that the intersection can have two points). This proves an existence of $b\in\bc$ such that $a+b=1-\e'$ and 
$|a|+|b|=1$ with $0\le\e'<1$, so Lemma~\ref{si5} yields $| |a|-a|^2\le\e'\le\e$. 
\end{prf}

Finally, we prove
\begin{dlemma}\label{si8}
Let $w\in\bc^n$ and $v\in\bt^n$. If $0<\frac1c\le|w|$ and $\<v,w\>>0$, then 
\begin{equation}\label{si9}
\|\si(w)-v\|_\infty\le c\sqrt{\|w\|_1(\|w\|_1-\<v,w\>)}.\end{equation}
\end{dlemma}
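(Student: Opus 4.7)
The plan is to reduce Lemma~\ref{si8} to Lemma~\ref{si7} by a pointwise ``rotation'' that absorbs $v$ into $w$. Specifically, I would introduce the vector $u\in\bc^n$ defined entrywise by $u_k=\bar v_k w_k$. Since $|v_k|=1$, this preserves moduli: $|u_k|=|w_k|\ge 1/c$, and therefore $\|u\|_1=\|w\|_1$. Moreover, $\<u,\mathbf{1}\>=\sum_k u_k=\sum_k\bar v_k w_k$ equals either $\<v,w\>$ or $\overline{\<v,w\>}$ depending on the convention for the inner product; in either case, the hypothesis that $\<v,w\>$ is a strictly positive real number forces $\<u,\mathbf{1}\>=\<v,w\>>0$. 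Hence Lemma~\ref{si7} applies to $u$ and yields
\[
\||u|-u\|_\infty^2\le \|u\|_1\bigl(\|u\|_1-\<u,\mathbf{1}\>\bigr)=\|w\|_1\bigl(\|w\|_1-\<v,w\>\bigr).
\]

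Next I would transfer this bound back to the difference $\si(w)-v$. For each coordinate $k$ the identity
\[
\si(w)_k-v_k=\frac{w_k}{|w_k|}-v_k=\frac{w_k-v_k|w_k|}{|w_k|}
\]
combined with $|w_k|\ge 1/c$ gives $|\si(w)_k-v_k|\le c\,|w_k-v_k|w_k||$. Multiplying inside the modulus by the unimodular number $\bar v_k$ does not change its value, so
\[
|w_k-v_k|w_k||=|\bar v_k w_k-|w_k||=|u_k-|u_k||,
\]
where the last equality uses $|u_k|=|w_k|$. Taking the maximum over $k$ yields $\|\si(w)-v\|_\infty\le c\,\||u|-u\|_\infty$, and combining this with the previous display gives exactly the estimate (\ref{si9}).

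Since Lemma~\ref{si7} already does the substantive work (via the geometric random-walk argument in Lemma~\ref{si5}), I do not expect a genuine obstacle here: the statement is essentially a gauge-covariance property of that lemma. The only point requiring care is the bookkeeping of conjugates in the chosen inner product convention, but the hypothesis $\<v,w\>>0$ (hence real) renders this bookkeeping harmless.
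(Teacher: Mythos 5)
Your proof is correct and follows essentially the same route as the paper: the paper also reduces to Lemma~\ref{si7} by setting $u=v\ov{w}$ (the entrywise conjugate of your $u=\ov v w$, which changes nothing since $\<v,w\>$ is real) and using $|w|\ge\frac1c$ to pass between $\si(w)-v$ and $|u|-u$.
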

\begin{prf}
Clearly,
\[
\mbox{\small$\frac1c$}\|\si(w)-v\|_\infty\le\|\ov{w}(\si(w)-v)\|_\infty=\||v\ov{w}|-v\ov{w}\|_\infty.
\]
Since for $u:=v\ov{w}$ we have $\<u,\mathbf{1}\>=\<v,w\>>0$ and $\|u\|_1=\|w\|_1$, the above estimate and Lemma~\ref{si7} yield (\ref{si9}) immediately.
\end{prf}

After proving Proposition~\ref{propalg} we can take on understanding the workings of the algorithm searching for the biunimodular vectors of a given $A\in U(n)$. Recall that we start with an arbitrary vector $V_0\in \bt^n$ and successively apply $P_A$ (given in  Proposition~\ref{propalg}), so that $V_j=P_A^jV_0$, until we reach a $J$ such that 
\begin{equation}\label{delta}
\|AV_J\|_1>n-\delta\ge n-\mbox{\small$\frac1{8}$}
\end{equation}
 with a certain precision $0<\delta\le\frac1{8}$. As we mentioned in Subsection~\ref{sect:algo}, there is no guarantee that for a given starting vector $V_0$ we will find $J\in\bn$ so that  (\ref{delta}) holds. Hence, in practice, the algorithm has to be applied to many random starting vectors in $\bt^n$. 

 Thus, to facilitate the further discussion of the algorithm, we will concentrate on the starting vectors $V_0$ satisfying the initial condition
\begin{equation}\label{v0}
\|AV_0\|_1> n-\mbox{\small$\frac1{8}$}. 
\end{equation}

Under this assumption we can gather our findings in the following

\begin{dproposition}\label{propalg2} Let $A\in U(n)$, $V_0\in\bt^n$ and $V_j=P_A^jV_0$ for $j\in\bn$. If $\|AV_0\|_1> n-\frac1{8}$, then for all $j\in\bn\cup\{0\}$
we have that
\begin{enumerate}
 \item[(a)] $ \|AV_{j}\|_{1}\le\|AV_{j+1}\|_{1}\le n$, 
 \item[(b)]  $V_j\in\bt^n$, $|AV_j|\ge\frac12$ and $|A^*(\si(AV_j))|\ge\frac12$,
\item[(c)] $\|AV_{j+1}\|_1 =\|AV_j\|_1$ iff $V_{j+1}=V_j$,
\item[(d)] $\|V_{j+1}-V_{j}\|_\infty\le 2 \sqrt n (\|AV_{j+1}\|_1 -\|AV_j\|_1)^\frac12$ ,
\item[(e)] $\lim_{j\to\infty}\|V_{j+1}-V_{j}\|_\infty=0$.
\end{enumerate}
\end{dproposition}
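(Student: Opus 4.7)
The plan is to obtain (a)--(d) simultaneously by induction on $j$, and then deduce (e) from the conjunction of (a) and (d). All the hard estimates are already packaged in Proposition~B.1; the present statement is essentially a clean bookkeeping of those single-step estimates along the orbit $V_j = P_A^j V_0$.

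For the base case $j=0$, the hypothesis $\|AV_0\|_1 > n - \frac{1}{8}$ lets me invoke Proposition~B.1 with $v = V_0$ and $\delta = \frac{1}{8}$: its parts (a)--(d) produce exactly (b), (c), (d) at index $j=0$, and in particular $A^*(\si(AV_0))$ does not vanish, so $V_1 = \si(A^*(\si(AV_0))) \in \bt^n$ is well defined. The monotonicity half of (a) at $j=0$, namely $\|AV_0\|_1 \le \|AV_1\|_1$, is the chain (\ref{si1})--(\ref{si2}) already used in Section~\ref{sect:algo}; the upper bound $\|AV_1\|_1 \le n$ is $\|A\|_{\infty\to 1}\le n$ from Theorem~\ref{char}(b). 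Crucially, this shows $\|AV_1\|_1 > n - \frac{1}{8}$ as well, which is precisely the hypothesis needed to feed the induction forward.

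For the inductive step, I would assume the four claims hold at indices up to $j-1$ and prove them at $j$. The monotonicity part of (a) at the previous step yields $\|AV_j\|_1 \ge \|AV_0\|_1 > n - \frac{1}{8}$, so Proposition~B.1 applies with $v = V_j$ and $\delta = \frac{1}{8}$. This supplies (b) at $j$ (noting once more that $V_{j+1} = \si(A^*(\si(AV_j))) \in \bt^n$ because $|A^*(\si(AV_j))| \ge \tfrac12 > 0$), while (a), (c), (d) at index $j$ are literal translations of Proposition~B.1(a), (c), (d) to the current vector, combined again with the general upper bound $\|AV_{j+1}\|_1 \le n$.

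The only genuinely new piece is (e). By (a), the real sequence $(\|AV_j\|_1)_{j\in\bn\cup\{0\}}$ is nondecreasing and bounded above by $n$, so it converges to some limit $L\le n$. Hence $\|AV_{j+1}\|_1 - \|AV_j\|_1 \to 0$ as $j\to\infty$, and (d) forces
\[
\|V_{j+1}-V_j\|_\infty \le 2\sqrt{n}\bigl(\|AV_{j+1}\|_1-\|AV_j\|_1\bigr)^{1/2} \to 0.
\]
No further obstacle arises; the delicate estimate needed to convert an $\ell^1$-norm increment into an $\ell^\infty$-displacement is already baked into Proposition~B.1(d), so (e) is just a one-line consequence of monotone convergence applied to the bounded increasing sequence $\|AV_j\|_1$.
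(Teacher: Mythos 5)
Your proposal is correct and follows essentially the same route as the paper: an induction in which the monotonicity of $\|AV_j\|_1$ propagates the threshold $\|AV_j\|_1>n-\frac18$ forward so that Proposition~B.1 applies at every step, with (e) then read off from (d) and the convergence of the bounded monotone sequence $(\|AV_j\|_1)_j$. The only quibble is a label slip in your inductive step: the monotonicity half of (a) comes from (\ref{si1})--(\ref{si2}) (i.e.\ (\ref{hill})), as you correctly say in the base case, not from Proposition~B.1(a), which is the $\ell^2$ estimate $\|\mathbf{1}-|Av|\|_2<\sqrt{2\delta}$.
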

\begin{prf}
(a) recalls (\ref{hill}) and $\|A\|_{\infty\to 1}\le n$. (b) is a matter of a simple induction. If $V_j\in\bt^n$ then, by Proposition~\ref{propalg}(b), $|AV_j|\ge\frac12$ and $|A^*(\si(AV_j))|\ge\frac12$, so the signum of the latter, that is $V_{j+1}$, must belong to $\bt^n$. (c) and (d)  follow from
 Proposition~\ref{propalg}. And (e) follows from (d), since (a) assures that the sequence $(\|AV_{j}\|_1)_{j\in\bn}$ is convergent.

\end{prf}

 Since the sequence $(\|AV_{j}\|_1)_{j\in\bn}$ is convergent one can hope, that $V_j$ shall converge to a fix point of $P_A$. This, however, is hard to assure even under the initial condition (\ref{v0}). This lack of proof for convergence can be traced to the square root appearing in Lemma~\ref{si5}, that is hard to remove, since (\ref{si6}) gives $||a|-a|\ge\sqrt{\e |a|}$ for $|a|\le\frac12$. Therefore, we can only fall on $\lim_{j\to\infty}\|V_{j+1}-V_{j}\|_\infty=0$ to draw a standard conclusion
\begin{dcorollary}
If the initial condition (\ref{v0}) holds, then  the cluster points of  $(V_j)_{j\in\bn}$ belong to the set $S_A:=\{v\in \bt^n: P_Av=v\}$.  Moreover, if $S_A$ is finite, then $V_j$ converges to a point in $S_A$. 
\end{dcorollary}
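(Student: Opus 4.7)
The plan is to exploit Proposition~\ref{propalg2} to suppress the only source of trouble, namely the discontinuity of $\si$ in the definition of $P_A$, and then invoke a standard topological fact about cluster sets of slowly varying sequences.

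\textbf{Step 1: every cluster point lies in $S_A$.} I would fix a cluster point $v^*$ of $(V_j)$ and pass to a subsequence $V_{j_k}\to v^*$. Part (e) of Proposition~\ref{propalg2} immediately gives $V_{j_k+1}\to v^*$ as well, so it suffices to verify that $V_{j_k+1}=P_A V_{j_k}\to P_A v^*$. Although $\si$ is discontinuous at $0$, part (b) of the same proposition furnishes the uniform lower bounds $|AV_{j_k}|\ge \tfrac12$ and $|A^*\si(AV_{j_k})|\ge \tfrac12$, which survive the passage to the limit; both applications of $\si$ in $P_A$ therefore take place in a region where $\si$ is continuous, and $P_A$ is continuous at every orbit cluster point. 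Matching the two candidate limits of $V_{j_k+1}$ yields $P_A v^*=v^*$.

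\textbf{Step 2: convergence when $S_A$ is finite.} Let $L\subseteq\bt^n$ denote the cluster set of $(V_j)$; it is nonempty, closed by compactness, and contained in $S_A$ by Step~1. The central ingredient here is the classical fact that a bounded sequence with $\|V_{j+1}-V_j\|_\infty\to 0$ has connected cluster set. I would prove this by contradiction: a partition $L=L_1\sqcup L_2$ with $\mathrm{dist}(L_1,L_2)=d>0$ would eventually force $\|V_{j+1}-V_j\|_\infty<d/3$, yet the orbit visits the $d/3$-neighborhoods of both $L_1$ and $L_2$ infinitely often, so infinitely many $V_j$ must fall in the corridor separating them, producing a cluster point in neither $L_1$ nor $L_2$. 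Once $L$ is shown to be connected, its inclusion in the finite (hence discrete) set $S_A$ forces $L$ to be a singleton $\{v^*\}$, and the general principle that a bounded sequence with a unique cluster point converges to it completes the proof.

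\textbf{Main obstacle.} Essentially all of the analytic work has already been done in Proposition~\ref{propalg2}; the remaining task is organizational. The step most prone to oversight is the connectedness of the cluster set under vanishing successive differences, which has to be supplied by hand rather than cited by name. Everything else is a short continuity argument that depends crucially on the fact that the initial condition (\ref{v0}) propagates the lower bounds $|AV_j|,|A^*\si(AV_j)|\ge \tfrac12$ along the entire orbit, thereby neutralizing the discontinuity built into~$P_A$.
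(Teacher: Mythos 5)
Your proof is correct and follows essentially the same route as the paper: continuity of $P_A$ along the orbit is secured by the lower bounds in Proposition~\ref{propalg2}(b), and the vanishing successive differences in part~(e) prevent the sequence from oscillating between the finitely many separated points of $S_A$. The only cosmetic difference is in Step~1, where you identify the limit of $V_{j_k+1}$ directly via part~(e), whereas the paper instead passes through the limit $L$ of $\|AV_j\|_1$ and invokes Proposition~\ref{propalg}(c); both arguments are valid.
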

\begin{prf}
Here, we use the notation provided in the two previous propositions and by convergence we mean the convergence in any $\ell^p$-norm, since they are all equivalent. 

 Let $L$ denote the limit of $\|AV_j\|_1$. 
If a subsequence $(V_{j_k})_{k\in\bn}$ of $(V_j)_{j\in\bn}$ converges to $V$, then $P_AV_{j_k}$ converges to $P_AV$, since by (b) of Proposition~\ref{propalg2}, the sequence $(V_j)_{j\in\bn}$ is contained in a closed subset of an open domain, where $P_A$ is continuous. Thus, $\|AV\|_1=\lim_{k\to\infty}\|AV_{j_k}\|_1=L$ and $\|AP_AV\|_1=\lim_{k\to\infty}\|AP_AV_{j_k}\|_1=\lim_{k\to\infty}\|AV_{j_k+1}\|_1=L$, so $\|AV\|_1=\|AP_AV\|_1$. Since $V\in\bt^n$ and   $\|AV\|_1>n-\frac1{8}$, Proposition~\ref{propalg}(c) implies that $P_AV=V$, hence $V\in S_A$.

If $S_A$ is finite, then its points are separated by open balls. Thus, almost all points of  $(V_j)_{j\in\bn}$ are contained in the union of these balls.  By Proposition~\ref{propalg2}(e), there is only one of these balls containing  almost all points of  $(V_j)_{j\in\bn}$, what assures the convergence of $V_j$.

\end{prf}

Since $\bi(A)\subset S_A$, the finiteness of $S_A$ is very hard to establish and, in the Fourier case, brings us back to Conjecture~\ref{bs}.

\section*{Acknowledgements}
Part of this work was carried during several visits of HF to Wroc\l aw, and two visits of ZR to Aachen. We would like to thank our respective hosting organizations. We also thank the referees for providing essential information that made the current understanding of biunimodular vectors more complete.

\vskip1cm

Hartmut F\"uhr \hfill Ziemowit Rzeszotnik

Lehrstuhl A f\"ur Mathematik \hfill Mathematical Institute

RWTH Aachen \hfill  University of Wroc\l aw

52056 Aachen, Germany \hfill 50-384 Wroc\l aw, Poland

\texttt{{fuehr@matha.rwth-aachen.de}} \hfill \texttt{{zioma@math.uni.wroc.pl}}

\begin{thebibliography}{99}

\bibitem{Al}{
W. Alltop, 
{\em Complex sequences with low periodic correlations},
IEEE Trans. Inform. Theory 26 (1980), no. 3, 350--354. }

\bibitem{Ba}{
J. Backelin, {\em  Square multiples $n$ give infinitely many cyclic $n$-roots}, Reports Matematiska Institutionen,
Stockholms Universitet 1989-No 8.}

\bibitem{BaF}{
J. Backelin, R. Fr\"oberg, {\em How we proved that there are exactly 924 7-roots}, Proceedings
of the 1991 International Symposium on Symbolic and Algebraic Computation
(ISSAC91), ACM-press (1991).}

\bibitem{Bau}
{H. Bauschke, P. Combettes, D. R. Luke,
{\em Phase retrieval, error reduction algorithm, and Fienup variants: a view from convex optimization},
J. Opt. Soc. Amer. A 19 (2002), no. 7, 1334--1345. }

\bibitem{BauBor}{
H. Bauschke, J. Borwein, {\em On the convergence of von Neumann's alternating projection algorithm for two sets},  Set-Valued Anal. 1 (1993), no. 2, 185--212. }

\bibitem{BBW}{
 J. J. Benedetto, R. Benedetto, J. Woodworth, 
{\em Optimal Ambiguity Functions and Weil's Exponential Sum Bound},
 J. Fourier Anal. Appl. 18 (2012), no. 3, 471--487. }

\bibitem{ben}{J. J. Benedetto, I. Konstantinidis, M. Rangaswamy, {\em Phase-coded waveforms and their design} IEEE Signal Processing Magazine, 26.1 (2009), 22--31.}

\bibitem{Be}{
 I. Bengtsson, W.  Bruzda, \r{A}. Ericsson, J-\r{A}. Larsson, W. Tadej, K. \.Zyczkowski, {\em  Mutually unbiased bases and Hadamard matrices of order six}, J. Math. Phys. 48 (2007), no. 5, 052106, 21 pp.}
\bibitem{BEP}{
P. Biran, M.  Entov, L. Polterovich, {\em
Calabi quasimorphisms for the symplectic ball},
Commun. Contemp. Math. 6 (2004), no. 5, 793--802. }

\bibitem{Bj0} {G. Bj\"orck, {\em Functions of modulus 1 on $Z\sb n$ whose Fourier transforms have constant modulus},
A. Haar memorial conference, Vol. I, II (Budapest, 1985), Colloq. Math. Soc. J\'anos Bolyai, vol. 49,
North-Holland, Amsterdam, 1987, 193--197.}
\bibitem{Bj} {G. Bj\"orck,
{\em Functions of modulus 1 on $Z\sb n$ whose Fourier transforms have constant modulus, and ``cyclic n-roots''}, Recent advances in Fourier analysis and its applications, Proc. NATO/ASI, Il Ciocco/Italy 1989, NATO ASI Ser., Ser. C 315, (1990), 131--140.}

\bibitem{BF}
{G. Bj\"orck, R. Fr\"oberg, 
{\em A faster way to count the solutions of inhomogeneous systems of algebraic equations, with applications to cyclic $n$-roots}
J. Symb. Comput.,
12, no.3 (1991),
329--336.
}

\bibitem{BF2} {G. Bj\"orck,  R. Fr\"oberg, {\em Methods to divide out certain solutions from systems of
algebraic equations, applied to find all cyclic 8 roots}, Analysis, algebra, and computers
in mathematical research (Lulea 1992), Lecture Notes in Pure and Appl. Math. 156,
57--70, Dekker, New York 1994.}

\bibitem{BS}
{G. Bj\"orck, B. Saffari,
{\em New classes of finite unimodular sequences with unimodular Fourier transforms. Circulant Hadamard matrices with complex entries}, C. R. Acad. Sci., Paris, S\'er. I  320, no.3 (1995), 319--324.}

\bibitem{BW1}
{ S. Brierley, S. Weigert, {\em Constructing mutually unbiased bases in dimension six}, Phys. Rev. A (3) 79 (2009), no. 5, 052316, 13 pp. 81P16}

\bibitem{BW2}
{S. Brierley, S. Weigert, Stefan, I. Bengtsson, {\em All mutually unbiased bases in dimensions two to five}, Quantum Inf. Comput. 10 (2010), no. 9-10, 803--820.}

\bibitem{Cab}{
R. Cabrera, T. Strohecker, H. Rabitz, {\em
The canonical coset decomposition of unitary matrices through Householder transformations},
J. Math. Phys. {\bf51} (2010), no. 8, 082101, 7 pp. }

\bibitem{Can1}
{ E. Cand\`es,Y. Eldar, T. Strohmer, V. Voroninski,
{\em Phase retrieval via matrix completion},
SIAM J. Imaging Sci. 6 (2013), no. 1, 199--225.}

\bibitem{Can2}
{ E. Cand\`es,  X. Li, M. Soltanolkotabi,
{\em Phase Retrieval via Wirtinger Flow: Theory and Algorithms}, arXiv preprint, arXiv:1407.1065, 2014.}

\bibitem{Cho}
{C.-H. Cho, {\em Holomorphic discs, spin structures, and Floer cohomology of the Clifford torus}, Int. Math. Res. Not. 2004(35) (2004), 1803--1843.}

\bibitem{Com}{
P. Combettes, H. Trussell, {\em
Method of successive projections for finding a common point of sets in metric spaces},
J. Optim. Theory Appl. 67 (1990), no. 3, 487--507. }

\bibitem{Dav}
{J. Davenport, {\em Looking at a set of equations},
Technical Report 87-06, Bath Computer Science, 1987.}

\bibitem{VB1}
{A. De Vos, S. De Baerdemacker, {\em Scaling a unitary matrix}, Open Syst. Inf. Dyn. 21(4) (2014).}

\bibitem{Dit1}{
P. Di\c{t}\u{a}, {\em  Parametrisation of unitary matrices}, J. Phys. A {\bf 15} (1982), no. 11, 3465--3473.}

\bibitem{Dit2}{
P. Di\c{t}\u{a}, {\em
Factorization of unitary matrices}, J. Phys. A {\bf36} (2003), no. 11, 2781--2789.} 

\bibitem{APM}
{R. Escalante, M. Raydan, {\em Alternating projection methods}, SIAM, 2011.}


\bibitem{Fa1}{
J.-C. Faug\`ere, {\em Finding all the solutions of cyclic 9 using Gr\"obner basis techniques}, Computer
Mathematics (Matsuyama, 2001), 1-12, Lecture Notes Ser. Comput. 9, World
Sci. Publ. (2001).}

\bibitem{Fa2}{
J.-C. Faug\`ere, {\em A new efficient algorithm for computing Gr\"obner bases without reduction to zero (F5)}, Proceedings of the 2002 international symposium on Symbolic and algebraic computation (ISSAC) (ACM Press), 2002, 75--83.}

\bibitem{Fog}{
F. Fogel, I. Waldspurger, A. d'Aspremont, {\em Phase retrieval for imaging problems}, arXiv preprint,
arXiv:1304.7735, 2013.}


\bibitem{GS}{E. Gabidulin, V. Shorin
{\em New Sequences with Zero Autocorrelation}
Problems of Information Transmission {\bf 38} no. 4 (2002), 255--267.}


\bibitem{GiRz}{J.~Gilbert, Z.~Rzeszotnik, 
{\em The norm of the Fourier transform on finite abelian groups,}
Ann. Inst. Fourier {\bf 60}  no. 4 (2010), 1317--1346.}

\bibitem{Ger}{
R. Gerchberg, W. Saxton, {\em A practical algorithm for the determination of the phase from image and diffraction plane pictures}, Optik {\bf 35} (1972), 237--246.}

\bibitem{Gr}{
M. Grassl, {\em On SIC-POVMs and MUBs in dimension 6,}
Proc.  ERATO  Conf.  on  Quant.  Inf.  Science
(EQIS 2004)
pp. 60--61, e-print
arXiv:quant-ph/0406175}

\bibitem{Haa}
{U.~Haagerup,
{\em Orthogonal maximal abelian $*$-subalgebras of the $n\times n$ matrices and cyclic $n$-roots}, Doplicher, S. (ed.) et al., Operator algebras and quantum field theory. Accademia Nazionale dei Lincei, Roma, Italy. Cambridge, MA: International Press. (1997), 296-322.}

\bibitem{Ha}{U.~Haagerup, {\em Cyclic $p$-roots of prime length $p$ and related complex Hadamard matrices}, arXiv preprint,  arxiv:0803.2629.}

\bibitem{IW}{M. Idel, M. Wolf, {\em Sinkhorn normal form for unitary matrices},
 Lin. Alg. Appl. {\bf 471} (2015), 76--84.}

\bibitem{Ja}
{P.  Jaming,
{\em Phase retrieval techniques for radar ambiguity problems},
J. Fourier Anal. Appl. 5 (1999), no. 4, 309--329.}
 
\bibitem{Jar}{
C. Jarlskog, {\em
A recursive parametrization of unitary matrices},
J. Math. Phys. {\bf 46} (2005), no. 10, 103508, 4 pp.} 

\bibitem{LL}{P. Langevin, G. Leander, {\em Counting all bent functions in dimension eight}
 Designs, Codes and Cryptography {\bf 59} no. 1-3 (2011), 193--205.}


\bibitem{LT}
{T.-Y. Li, C.-H. Tsai, {\em HOM4PS-2.0
para: Parallelization of
HOM4PS-2.0 for Solving Polynomial Systems}, preprint.}

\bibitem{Mezzadri}{F.~Mezzadri, {\em 
How to generate random matrices from the classical compact groups}, 
Notices Amer. Math. Soc. {\bf 54} (2007),  592--604.} 

\bibitem{Mu}{
F. Murnaghan, 
{\em On a convenient system of parameters for the unitary group},
Proc. Nat. Acad. Sci. U. S. A. {\bf 38} (1952), 127--129. }

\bibitem{Ne}{K. Nemoto, {\em Generalized coherent states for SU(N) systems}, J. Phys. A: Math. Gen. {\bf 33} (2000), 3493--3506.}


\bibitem{Sa}{B.~Saffari,
{\em Some polynomial extremal problems which emerged in the twentieth century}, Byrnes, James S. (ed.), Twentieth century harmonic analysis--a celebration. Proceedings of the NATO Advanced Study Institute, Il Ciocco, Italy, July 2-15, 2000. Dordrecht: Kluwer Academic Publishers. NATO Sci. Ser. II, Math. Phys. Chem. 33 (2001), 201--223}

\bibitem{Reck}{
M. Reck, A. Zeilinger, H. Bernstein, P.
Bertani, {\em Experimental realization of any discrete unitary operator}, Phys. Rev. Lett. {\bf 73} no. 1 (1994), 58--61.}

\bibitem{Ro}{
D. Rowe, B. Sanders, H. de Guise {\em Representations of the
Weyl group and Wigner functions for SU(3)},  J. Math. Phys. {\bf 40} (1999), no. 7, 3604--3615. }

\bibitem{sch}{
 J. Schwinger, {\em Unitary operator bases}, Proc. Nat. Acad. Sci. U.S.A. 46 (1960), 570--579.}

\bibitem{SHH}{
C. Spengler, M. Huber, B. Hiesmayr, {\em
A composite parameterization of unitary groups, density matrices and subspaces},
J. Phys. A {\bf 43} (2010), no. 38, 385306, 11 pp. }

\bibitem{TaZy}{W.~Tadej, K.~\.{Z}yczkowski, {\em Defect of a unitary matrix}, Lin. Alg. Appl. {\bf 429} (2008), 447--481.}

\bibitem{TS1}{
T. Tilma, E. Sudarshan,
{\em Generalized Euler angle parametrization for SU(N)},
J. Phys. A {\bf35} (2002), no. 48, 10467--10501.}

\bibitem{TS2}{
T. Tilma, E. Sudarshan,
{\em Generalized Euler angle parametrization for U(N) with applications to SU(N) coset volume measures},
J. Geom. Phys. {\bf 52} (2004), 263--283.}

\bibitem{vN}{
J. von Neumann, {\em Functional Operators}, volume II, Princeton University Press, Princeton, NJ, (1950). Reprint of mimeographed lecture notes first distributed in 1933.}
\end{thebibliography}
\end{document}